\DeclareMathAlphabet{\pazocal}{OMS}{zplm}{m}{n}
\newtheorem{theorem}{Theorem}[section]
\newtheorem{lemma}[theorem]{Lemma}
\newtheorem{proposition}[theorem]{Proposition}
\newtheorem{corollary}[theorem]{Corollary}
\theoremstyle{definition}
\newtheorem{example}[theorem]{Example}
\newtheorem{remark}[theorem]{Remark}
\newtheorem{notation}[theorem]{Notation}
\def\Jac{\mbox{\rm Jac$\:$}}
\def\O{\mathcal{O}}
\def\ff{\frak}
\def\Spec{\mbox{\rm Spec}}
\def\Max{\mbox{\rm Max}}
\def\End{\mbox{\rm End}}
\def\Zar{\mbox{\rm Zar}}
\def\patch{{\rm patch}}
\def\gen{\mbox{\rm gen}}
\def\inv{{\rm inv}}
\def\cal{\mathcal}
\def\Zar{{\rm Zar}}
\begin{document}

%% Title, authors and addresses

%% use the tnoteref command within \title for footnotes;
%% use the tnotetext command for the associated footnote;
%% use the fnref command within \author or \address for footnotes;
%% use the fntext command for the associated footnote;
%% use the corref command within \author for corresponding author footnotes;
%% use the cortext command for the associated footnote;
%% use the ead command for the email address,
%% and the form \ead[url] for the home page:
%%
%% \title{Title\tnoteref{label1}}
%% \tnotetext[label1]{}
%% \author{Name\corref{cor1}\fnref{label2}}
%% \ead{email address}
%% \ead[url]{home page}
%% \fntext[label2]{}
%% \cortext[cor1]{}
%% \address{Address\fnref{label3}}
%% \fntext[label3]{}

\title[The topology of valuation-theoretic representations]{On the topology of valuation-theoretic representations of integrally  closed domains}

%% use optional labels to link authors explicitly to addresses:
%% \author[label1,label2]{<author name>}
%% \address[label1]{<address>}
%% \address[label2]{<address>}

\author{Bruce Olberding}

\address{Department of Mathematical Sciences, New Mexico State University,
Las Cruces, NM 88003-8001}

\begin{abstract}  Let $F$ be a field. For each nonempty subset $X$ of the Zariski-Riemann space of valuation rings of $F$, let ${{A}}(X) = \bigcap_{V \in X}V$ and ${{J}}(X) = \bigcap_{V \in X}{\ff M}_V$, where ${\ff M}_V$ denotes the maximal ideal of $V$.  We examine  connections between topological features of $X$ and the algebraic structure of the ring ${{A}}(X)$.  
We show that if ${{J}}(X) \ne 0$ and $A(X)$ is a completely integrally closed local ring that is not a valuation ring of $F$, then there is a subspace $Y$ of the space of valuation rings of $F$ that is perfect in the patch topology such that  ${{A}}(X) = {{A}}(Y)$. If any countable subset of  points is removed from $Y$, then the resulting set remains a representation of $A(X)$. Additionally, if $F$ is a countable field,   the set $Y$ can be chosen  homeomorphic to the Cantor set.  We apply these results to study properties of the ring ${{A}}(X)$ with specific focus on topological conditions that guarantee ${{A}}(X)$ is a Pr\"ufer domain, a feature that is reflected in  the Zariski-Riemann space when viewed as a locally ringed space. We also 
 classify the rings ${{A}}(X)$ where $X$ has finitely many patch limit points, thus giving a topological generalization of the class of Krull domains, one that also includes interesting Pr\"ufer domains. To illustrate the latter, we show how an intersection of valuation rings arising naturally in the study of local quadratic transformations of a regular local ring can be described using these techniques.

 \end{abstract}

%% MSC codes here, in the form: \MSC code \sep code
%% or \MSC[2008] code \sep code (2000 is the default)

\thanks{MSC: 13A18, 13B22, 13F05, 13H05}

\maketitle

%%%%%%%%%%%%%%%%%%%%%%%%%%%%%%%%%%%%\end{frontmatter}

%%%%%%%%%%%%%%%%%%%%%%%%%%%%%%%%%%%%%%%%%%%%%%%%%%%%%%%%%%%%%%%%%
%%%%%%%%%%%%%%%%%%%%%%%%%%%%%%%%%%%%%%%%%%%%%%%%%%%%%%%%%%%%%%%%%
\section{Introduction}

By a classical theorem of Krull, every integrally closed subring $A$ of a field $F$ is an intersection of valuation rings of $F$ \cite[Theorem 10.4, p.~73]{Mat}.  If also $A$ is local but not a valuation ring of $F$, then $A$ is an  intersection of infinitely many valuation rings \cite[(11.11), p.~38]{N}. If  in addition $A$ is  completely integrally closed, then any set of valuation rings of $F$ that dominate $A$ and whose intersection is $A$ must be uncountable. (See Corollary~\ref{Baire}.)
 Rather than the cardinality of representing sets of valuation rings of $F$, our focus on this article is on qualitative features of these sets. We describe these features topologically as part of a goal of connecting topological properties of subspaces $X$ of the Zariski-Riemann space $\Zar(F)$ of $F$ with algebraic features of the intersection ring $A = \bigcap_{V \in X}V$. 
%Throughout, $F$ denotes a field, and $\Zar(F)$ denotes the collection of valuation rings of $F$; i.e., $\Zar(F)$ consists of  the subrings $V$ of $F$ such that for each $0 \ne t\in F$, $t \in F$ or $t^{-1} \in F$. 
%In this article we consider topological  features of  subsets $Z$ of $\Zar(F)$  whose valuation rings intersect to form a local ring. In so doing, we connect some of the  topological properties of  $Z$ to algebraic properties of the ring $A = \bigcap_{V\in Z}V$.  
 We work mostly in  the patch topology on $\Zar(F)$,  with special emphasis on the collection $\lim(X)$ of  patch limit points of the subset $X$ of $\Zar(F)$. 
 
 For each nonempty subset $X$ of $\Zar(F)$, let ${{A}}(X) = \bigcap_{V \in X}V$ and ${{J}}(X) = \bigcap_{V \in X}{\ff M}_V$, where ${\ff M}_V$ denotes the maximal ideal of $V$.  We prove in 
  Lemma~\ref{finally} that if $X$ is infinite and ${{A}}(X)$ is a local ring, then ${{J}}(\lim X) \subseteq  {{A}}(X)$.  While it is clear that ${{J}}(\lim X)$ is an ideal of ${{A}}(\lim X)$, the content of the lemma is that this radical ideal is also an ideal  of the subring ${{A}}(X)$ of $A(\lim X)$.  We use this fact to show in Theorem~\ref{cic 2} that if ${{J}}(X) \ne 0$ and $A$ is completely integrally closed and local but not a valuation ring of $F$, then ${{A}}(X) = {{A}}(\lim X)$.  In fact, it is shown that $\lim(X)$ can be replaced with a perfect space, that is, a space $Y$ for with $Y = \lim(Y)$.  
   Along with the Baire Category Theorem, this implies
  in Corollary~\ref{Baire} 
    that any countable subset of $\lim(X)$ can be removed and the resulting set remains a representation of $A$. Thus no completely integrally closed local subring of $F$ that is not a valuation ring of $F$ can be written as a countable intersection of valuation rings that dominate the ring.     
    Additionally, if $F$ is a countable field, we conclude in Corollary~\ref{Cantor} that $\lim(X)$ contains a subset $Y$ such that ${{A}}(X) = {{A}}(Y)$ and $Y$ is homeomorphic to the Cantor set. Thus in the case of a countable field $F$, every local completely integrally closed subring of $F$ that is not a valuation ring of $F$ has a representation that is homeomorphic in the patch topology to the Cantor set. (By a {\it representation} of an integrally closed subring $A$ of $F$ we mean a subset $Y$ of $\Zar(F)$ such that $A = {{A}}(Y)$.)

      In Section 4 we apply the results of Section 3 to 
    develop sufficient topological conditions for an intersection of valuation rings to be  a Pr\"ufer domain; that is, a domain $A$ for which  each localization of $A$ at a maximal ideal is a valuation domain. 
    In general the question of whether an intersection of valuation rings is a Pr\"ufer domain is very subtle; see \cite{G2, LT,  OGeo, OZR, OCpt, Roq, Rush}  and their references for various approaches to this question.
  While Pr\"ufer domains have been thoroughly investigated in Multiplicative Ideal Theory (see for example \cite{FHP, G, LM}), our interest here lies more in the point of view that these rings form the ``coordinate rings'' of  sets $X$ in $\Zar(F)$ that are non-degenerate in the
sense that every localization of ${{A}}(X)$ lies in $X$. This is expressed geometrically as asserting that $X$ lies in an affine scheme of $\Zar(F)$, where $\Zar(F)$ is viewed as a locally ringed space. We discuss this viewpoint in more detail at the beginning of Section 4. We prove for example in Corollary~\ref{limit cor} that if $X$ is a set of rank one valuation rings of $F$ such that ${{J}}(X) \ne 0$ and ${{A}}(\lim X)$ is a Pr\"ufer domain for which the intersection of nonzero prime ideals is nonzero, then ${{A}}(X)$ is a Pr\"ufer domain. In Sections~4 and~5 we prove several other results in this same spirit, where the emphasis is on determination of algebraic properties of ${{A}}(X)$ from algebraic properties of the generally larger ring ${{A}}(\lim X)$.  This continues the theme in Section 3 of using the patch limit points of $X$ to shed light on $A(X)$.

  In Section 5 we 
 focus on conditions that limit the size of $\lim X$.  
  For example, in Corollary~\ref{classify} we classify the rings ${{A}}(X)$ where $X$ is a set of rank one valuation rings in $\Zar(F)$ with finitely many patch limit points. Since such rings include the class of Krull domains that have quotient field $F$, this setting can be viewed as a topological generalization of Krull domains. But the class also includes interesting Pr\"ufer domains that behave very differently than Krull domains. 
   To illustrate the latter assertion, we show that a subspace $X$ of $\Zar(F)$ associated to a sequence of iterated quadratic transforms of a regular local ring yields a Pr\"ufer domain such as those studied in Section~4. It follows that the boundary valuation ring of a  sequence of iterated quadratic transforms of regular local rings is a localization of the order valuation rings associated to each of the quadratic transforms.

%Every  subring $A$ of $F$ that is integrally closed in $F$ can be represented as an intersection of valuation rings in $F$ \cite{EP}.
% In general there are many possible representations of $A$. For example, if $A$ is an  integrally closed Noetherian local domain with quotient field $F$ and Krull dimension $>1$, then $A$ can be represented as an intersection of the localizations of $A$ at height one prime ideals, each of which is a discrete rank one valuation ring. On the other hand, $A$  can also be represented as an intersection of valuation rings that dominate $A$.  In the former case, the collection $Z$ has the property that every nonzero element of $F$ is a unit in all but finitely many valuation rings in $Z$. In the latter, every nonunit in $D$ is a nonunit in every valuation ring in $Z$.  

For some related examples of recent  work on the topology of the Zariski-Riemann space of a field, see   \cite{FFL, FS, OGeo, OGraz, OZR,  OCpt} and their references. Our approach is  related to that of \cite{OCpt} where it is shown that if $X$ is a quasicompact set of rank one valuation rings in $\Zar(F)$ with ${{J}}(X)\ne 0$, then ${{A}}(X)$ is a one-dimensional Pr\"ufer domain with quotient field 
 $F$ and nonzero Jacobson radical.  Similarly, in the present article we see that the topological condition of having finitely many patch limit points suffices to determine algebraic features of the intersection ring ${{A}}(X)$. In both the present article and \cite{OCpt} Pr\"ufer domains play a key role because these correspond to affine schemes in $\Zar(F)$ when the Zariski-Riemann space is viewed as a locally ringed space; see \cite{OZR} for more on this point of view. In general, the determination of which subsets of $\Zar(F)$ underlie affine schemes in the Zariski-Riemann space requires geometrical in addition to topological information \cite{OGeo, OZR}.

\medskip

{\it Notation and terminology.}
%We recall  the notions of limit points and isolated points in a topological space. 
 Let $X$ be a topological space, and let
  $Y$ be a subspace of $X$.  Then $x \in X$ is a {\it limit point of $Y$} (or {\it accumulation point of $Y$}) if each open neighborhood $U$ of $x$ in $X$ contains a point in $Y$ distinct from $x$. If also $X$ is Hausdorff (and the limit points we consider in this article will all be limit points in a Hausdorff topology), then every open neighborhood of the limit point $x$ contains infinitely many points of $Y$.  
   An element
$x \in X$ is an {\it isolated point of $X$} if $\{x\}$ is an open subset of $X$.
   Thus $x \in X$ is a limit point in $X$ if and only if $x$ is not an isolated point in $X$.  The set of all  isolated points of $X$, as a union of open sets, is open in $X$. Thus the set of all limit points of $X$ is closed in $X$, and the closure of a subset $Y$ in $X$ consists of the elements of $Y$ and the limit points of $Y$.

\section{Properties of patch limit points}

We assume throughout the paper that $F$ denotes a field and $\Zar(F)$ is the 
 set of valuation rings of $F$, i.e., those subrings $V$ of $F$ such that for each $0 \ne t \in F$, $t \in V$ or $t^{-1} \in V$. We view $\Zar(F)$ as a topological space under the Zariski and patch topologies (and, in Section~4, under the inverse topology), each of which will be defined using 
 the following notation.

\begin{notation} { 
For each subset $S$ of $F$, let \begin{center} ${\cal U}(S) = \{V \in \Zar(F):S \subseteq V\}$ \: and \: ${\cal V}(S) = \{V \in \Zar(F):S \not \subseteq V\}$. \end{center} }
\end{notation}

The {\it Zariski topology} on $\Zar(F)$ has as a basis of  open sets the empty set and sets of the form ${\cal U}(x_1,\ldots,x_n)$, where  $x_1,\ldots,x_n \in F$. The set $\Zar(F)$ with the Zariski topology is the {\it Zariski-Riemann space} (or {\it abstract Reimann surface}) of $F$ \cite[Chapter VI, Section 17]{ZS}.
  %In all later sections, $F$ will remain fixed, but we sometimes restrict to those valuation rings $V \in \Zar(F)$ that contain a given subring $A$ of $F$. %In this case, we write $\Zar(F)_{F/A}$ for the set of valuation rings in $\Zar(F)$ that contain $A$.  Note that $\Zar(F)_{F/A}$ is simply ${\cal U}(A)$.  
Several authors (see for example \cite{DF} and the discussion in \cite[Remark 3.2]{OZR}) have established independently that the Zariski-Riemann space $\Zar(F)$ is a {\it spectral space}, meaning that (a) $\Zar(F)$ is quasicompact and $T_0$, (b) $\Zar(F)$ has a basis of quasicompact open sets, (c) the intersection of finitely many quasicompact open sets in $\Zar(F)$ is quasicompact, and (d) every nonempty irreducible closed set in $\Zar(F)$ has a unique generic point.  %See {\bf [ref]} for a discussion of the history of this result. 
 %As a spectral space, $\Zar(F)$ admits two other particularly useful topologies, the inverse and patch topologies.  

The {\it patch topology} (or {\it constructible topology}) 
on $\Zar(F)$ is given by the topology that  has as a basis the subsets of $\Zar(F)$  of the form  \begin{center}
(a) ${\cal U}(x_1,\ldots,x_n)$,  (b) ${\cal U}(x_1,\ldots,x_n)  \cap {\cal V}(y_1) \cap \cdots \cap {\cal V}(y_m)$  or (c) ${\cal V}(y_1)$,
\end{center}
where $ x_1,\ldots,x_n,y_1,\ldots,y_m \in F$. The complement in $\Zar(F)$ of any set in this basis is again open in the patch topology. Thus  the patch topology has a basis of sets that are both closed and open  (i.e., the patch topology is zero-dimensional). The  patch topology is also spectral and hence  quasicompact
 \cite[Section 2]{Hoc}.    Unlike the Zariski topology on $\Zar(F)$, the patch topology  is always  Hausdorff.  For examples of  recent work on the patch topology of $\Zar(F)$, see \cite{FFL, OZR, OCpt}.

If $y_1,\ldots,y_m$ are nonzero, then  $V \in \Zar(F)$ is a member of a set of the form in (b) if and only if $x_1,\ldots,x_n\in V$ and $1/y_1,\ldots,1/y_n \in {\ff M}_V$.  On the other hand, if some $y_i$ is $0$, then the set in (b) is empty. From these observations, we deduce that for a nonempty subset $X$ of $\Zar(F)$, a valuation ring $U \in \Zar(F)$ is a patch limit point of $X$ if and only if 
for all $x_1,\ldots,x_n \in U$, $y_1,\ldots,y_m \in {\ff M}_U$, there exists (infinitely many, since the patch topology is Hausdorff) $ V \in X$ with $V \ne U$,  $x_1,\ldots,x_n \in V$ and $y_1,\ldots,y_m \in {\ff M}_V$.  
We will only consider limit points in $\Zar(F)$ with respect to the patch topology. 
%With this in mind, the following notation for the patch limit points of a subset of $\Zar(F)$ is unambiguous. 

\begin{notation} {For a subset $X$ of $\Zar(F)$ we denote by  $\lim(X)$ the 
 set of limit points of $X$ in the patch topology of $\Zar(F)$, and we 
  denote by $\patch(X)$ the closure of $X$ in the patch topology.  Thus $\patch(X) = X \cup \lim(X),$ and $X$ is patch closed if and only if $X = \patch(X)$, if and only if $\lim(X) \subseteq X$.}
  \end{notation}

  \begin{remark} \label{min notation} {Every nonempty patch closed subset  $X$ of $\Zar(F)$ is a spectral space and hence contains minimal elements with respect to set inclusion \cite[Section 2 and Proposition 8]{Hoc}. We denote by $\min(X)$ the set of minimal elements of $X$.   }
  \end{remark}
  
  Note that $\lim(X)$ is a patch closed set. In Section 4 we work with the set $\min(\lim X)$.

We now  develop some  of the technical properties of patch limit points that are needed in the next sections. The first such property is an algebraic interpretation of the ring formed by intersecting the patch limit valuation rings of a subspace of $\Zar(F)$.  Recall from the introduction the operators (or more precisely, presheaves) ${{A}}(-)$ and ${{J}}(-)$.

%\begin{proposition} \label{not empty}
%Let $Z \subseteq \Zar(F)$. Then 
%$\lim(Z)$ is empty if and only if $Z$ is a finite set.

%\end{proposition}

%\begin{proof} 
%Since   $\Zar(F)$ is quasicompact in the patch topology, the patch closed space $\patch(Z)$ is also quasicompact. If $\lim(Z)$ is empty, then every point in $\patch(Z)$ is isolated in the patch topology, so that as a discrete quasicompact space, $\patch(Z)$ is  finite. Conversely, if $Z$ is finite, then since $\Zar(F)$ is Hausdorff in the patch topology, $Z$ is patch closed in $\Zar(F)$, and hence, as a finite set, has no patch limit points.
%\end{proof}

\begin{proposition} \label{not empty}  \label{union} Let $X$ be an infinite subset of $\Zar(F)$, and let ${\cal C}$ be the collection of cofinite subsets of $X$.  Then $\lim(X)$ is nonempty and
$$\bigcup_{Y \in {\cal C}} A(Y) \: = {{A}}(\lim X) \:\: {\mbox{ and }} \:\:
\bigcup_{Y \in {\cal C}} {{J}}(Y) \: = {{J}}(\lim X).$$

\end{proposition}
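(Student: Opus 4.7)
The plan is to verify the three assertions in turn.

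For nonemptyness of $\lim(X)$, I would use patch quasicompactness together with the Hausdorff property: if $\lim(X)$ were empty then each $V \in \Zar(F)$ would admit a patch-open neighborhood $U_V$ with $U_V \cap X \subseteq \{V\}$, and by quasicompactness finitely many such sets $U_{V_1},\ldots,U_{V_n}$ would cover $\Zar(F)$, forcing $X \subseteq \{V_1,\ldots,V_n\}$ and contradicting that $X$ is infinite.

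The key point underlying both equalities is that for each $x \in F$ the sets ${\cal U}(x)$ and ${\cal V}(x)$ are complementary basic patch-open sets, hence both patch-clopen. For the inclusion $\bigcup_{Y \in {\cal C}} A(Y) \subseteq A(\lim X)$: fix $x \in A(Y)$ with $X \setminus Y$ finite and let $V \in \lim(X)$; if $x$ were not in $V$, then $V \in {\cal V}(x)$, a patch-open neighborhood of $V$, which by the limit-point property must meet $X$ in infinitely many valuation rings, hence meet $Y$ in some $W$, contradicting $x \in A(Y) \subseteq W$. For the reverse inclusion, fix $x \in A(\lim X)$ and set $Z := X \cap {\cal V}(x)$; if $Z$ were infinite then by the first part of the proposition (applied to $Z$) it would have a patch limit point $V_0 \in \Zar(F)$, and since $Z \subseteq X$ and ${\cal V}(x)$ is patch-closed we would have $V_0 \in \lim(X) \cap {\cal V}(x)$, contradicting $x \in A(\lim X)$. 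Hence $Z$ is finite, $Y := X \setminus Z$ is cofinite in $X$, and $x \in A(Y)$ by construction.

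The $J$-equality is formally identical once one notes that for $0 \ne x \in F$ the condition $x \in {\ff M}_V$ is equivalent to $1/x \notin V$, i.e., $V \in {\cal V}(1/x)$; the clopen sets ${\cal U}(1/x)$ and ${\cal V}(1/x)$ then play the roles that ${\cal U}(x)$ and ${\cal V}(x)$ played in the $A$-case, and both inclusions go through by the same arguments. The case $x = 0$ is vacuous since $0 \in {\ff M}_V$ for every valuation ring $V$.

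I do not anticipate a substantial obstacle; the essential content of the proof is the observation that both ``$x \in V$'' and ``$x \in {\ff M}_V$'' cut out patch-clopen subsets of $\Zar(F)$, so that the failure set inside $X$ is finite precisely when $x$ belongs to the corresponding ring associated to some cofinite subset of $X$. This is what makes the cofinite-union language in the statement capture the limit-point intersection exactly.
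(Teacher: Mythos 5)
Your proposal is correct and follows essentially the same route as the paper: nonemptiness of $\lim(X)$ via patch quasicompactness, and both equalities via the observation that ${\cal U}(x)$ and ${\cal V}(x)$ (respectively ${\cal U}(1/x)$ and ${\cal V}(1/x)$ for the ${{J}}$-statement) are complementary patch-clopen sets, so that the set of valuation rings in $X$ excluding $x$ (resp.\ whose maximal ideal excludes $x$) is finite exactly when $x$ lies in $A(Y)$ (resp.\ ${{J}}(Y)$) for some cofinite $Y$. The only cosmetic difference is in the reverse inclusion, where you directly show the failure set $X \cap {\cal V}(x)$ is finite, while the paper argues contrapositively by choosing a witness $V_Y$ for each cofinite $Y$ and extracting a limit point of the witness set; the two arguments are logically equivalent.
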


\begin{proof}  
 Since $\Zar(F)$ is quasicompact in the patch topology, so is the patch closed subset $\patch(X)$. Thus  $\lim(X)$ is nonempty since $X$ is infinite. 
 We first claim that $\bigcup_{Y \in {\cal C}}{{A}}(Y) \subseteq {{A}}(\lim X)$.
Let $Y \in {\cal C}$, and let   $U \in \lim(X)$.  Suppose $x \in F$ with $x \not \in U$. Then $U \in  {\cal V}(x)$. Since $U$ is a patch limit point of $X$ and $ {\cal V}(x)$ is a patch open subset of $X$,  there are infinitely many members of $X$ in ${\cal{V}}(x)$.  Thus, since $Y$ is cofinite, there exists $W \in Y \cap {\cal V}(x)$, and hence $x \not \in W$. Therefore, $x \not \in {{A}}(Y)$.  This proves the first claim.  

To prove the reverse inclusion, let $x \in F$ with  $x \not \in \bigcup_{Y \in {\cal C}} {{A}}(Y)$.  For each $Y \in {\cal C}$, there exists $V_Y \in Y$ such that $x \not \in V_Y$.  If the collection $Z=\{V_Y:Y \in {\cal C}\}$ is finite, then $Z' = X \setminus Z \in {\cal C}$, so that $V_{Z'} \in Z' \cap Z = \emptyset$, which is impossible. Thus $Z$ is infinite, which  as noted at the beginning of the proof implies that  
 there is $U \in \lim(Z)$. If $x \in U$, then $U \in {\cal U}(x)$, and, since $U$ is a patch limit point of $Z$, $Z \cap {\cal U}(x) \ne \emptyset$. This implies that 
 there exists $Y \in {\cal C}$ such that $x \in V_Y$, a contradiction. Thus $x \not \in U$ so that $x \not \in {{A}}(\lim X)$. This proves that 
$\bigcup_{Y \in {\cal C}}{{A}}(Y)  = {{A}}(\lim X).$ 

The proof for ${{J}}$ is similar. We verify the inclusion ``$\subseteq$'' first.  Let $Y \in {\cal C}$, and let   $U \in \lim(X)$.  Suppose $x \in F$ with $x \not \in {\ff M}_U$. Since $U$ is a valuation ring, this implies 
%that either  $x$ is a unit in $U$ or $x \not \in U$.  
 % Thus 
  $U $ is in the patch open set $ {\cal U}(x^{-1})$. Since $U \in \lim(X)$, the set ${\cal U}(x^{-1})$ contains infinitely many valuation rings $V$ in  $X$. For each such valuation ring $V$, $x \not \in {\ff M}_V$.  Thus, since $Y$ is cofinite, there exists $V \in Y$ such that $x \not \in {\ff M}_V$.  Hence $x \not \in {{J}}(Y)$.  This shows that 
$\bigcup_{Y \in {\cal C}} {{J}}(Y)    \subseteq {{J}}(\lim X).$

Finally, to prove the reverse inclusion,  let $x \in F$ with  $x \not \in \bigcup_{Y \in {\cal C}}{{J}}(Y) $.  Then for each $Y \in {\cal C}$, there exists $V_Y \in Y$ such that $x \not \in {\ff M}_{V_Y}$.  As above, the collection  
 $Z=\{V_Y:Y \in {\cal C}\}$  is infinite and so there exists $U \in \lim(Z)  $. If $x \in {\ff M}_U$, then, since $U$ is a patch limit point of $Z$, there exists $Y \in {\cal C}$ such that $x \in {\ff M}_{V_Y}$, a contradiction. Thus $x \not \in {\ff M}_U$, and it follows that $\bigcup_{Y \in {\cal C}} {{J}}(Y) = {{J}}(\lim X).$ 
  \end{proof}

\begin{corollary} \label{dominate}  Let $X$ be an infinite subset of   $ \Zar(F)$, and let $x \in F$. If $x \in {\ff M_V}$ for  all but finitely many valuation rings $V \in X$, then $x \in {{J}}(\lim X)$.  
\end{corollary}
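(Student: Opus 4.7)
The plan is to derive this corollary directly from the ``union formula'' for $J$ established in Proposition~\ref{union}, namely $\bigcup_{Y \in {\cal C}} J(Y) = J(\lim X)$, where ${\cal C}$ denotes the collection of cofinite subsets of $X$. The hypothesis has been crafted precisely so that the set $Y := \{V \in X : x \in {\ff M}_V\}$ is a cofinite subset of $X$, and this immediately provides the required witness on the left-hand side of that formula.

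Concretely, I would proceed as follows. First, set $Y := \{V \in X : x \in {\ff M}_V\}$. By the hypothesis that $x \in {\ff M}_V$ for all but finitely many $V \in X$, the complement $X \setminus Y$ is finite, so $Y \in {\cal C}$. Second, by the very definition of $Y$, we have $x \in {\ff M}_V$ for every $V \in Y$, hence $x \in \bigcap_{V \in Y}{\ff M}_V = J(Y)$. Third, invoke Proposition~\ref{union} to conclude
\[
x \in J(Y) \subseteq \bigcup_{Y' \in {\cal C}} J(Y') = J(\lim X),
\]
which is the desired conclusion.

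Since the work of translating cofiniteness of ``all but finitely many'' into membership in ${\cal C}$ is entirely bookkeeping, there is really no obstacle here: the statement is a one-line corollary of the $J$-half of Proposition~\ref{union}. The only thing worth noting is that we do not need to invoke infiniteness of $X$ as an extra hypothesis, because if $X$ were finite the condition ``$x \in {\ff M}_V$ for all but finitely many $V \in X$'' would be vacuously compatible with $\lim(X) = \emptyset$; but by assumption $X$ is infinite, so Proposition~\ref{union} applies and $\lim(X)$ is nonempty, making the conclusion nonvacuous.
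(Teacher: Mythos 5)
Your proof is correct and is essentially identical to the paper's: both define $Y = \{V \in X : x \in {\ff M}_V\}$, observe it is cofinite, and apply the $J$-half of Proposition~\ref{union}. (If anything, your write-up is slightly cleaner, since the paper's version ends with the weaker-looking containment $J(Y) \subseteq A(\lim X)$, apparently a typo for $J(\lim X)$.)
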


\begin{proof} If $x \in {\ff M}_V$ for all but finitely many valuation rings in $ X$, then the set $Y = \{V \in X:x \in {\ff M}_V\}$ is cofinite. By Proposition~\ref{union}, $x \in {{J}}(Y) \subseteq {{A}}(\lim X).$
\end{proof}

In later sections we frequently work with nonempty subsets $X$ of $\Zar(F)$ such that ${{J}}(X) \ne 0$. The next corollary shows this criterion is reflected in the patch topology of $\Zar(F)$. 

\begin{corollary} \label{F cor} Let $X$ be a nonempty subset of $\Zar(F)$.   Then ${{J}}(X) = 0$ if and only if there exists $U \in \patch(X)$ such that ${\ff M}_U \cap {{A}}(X) = 0$.   
\end{corollary}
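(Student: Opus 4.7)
The plan is to prove both implications, with the nontrivial direction handled by a patch-quasicompactness argument.

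For $(\Leftarrow)$, I would assume some $U \in \patch(X)$ satisfies $\ff M_U \cap A(X) = 0$ and derive a contradiction from $J(X) \ne 0$. Pick $0 \ne t \in J(X)$; since $J(X) \subseteq A(X)$, we have $t \in A(X)$, and $t \in \ff M_V$ for every $V \in X$. If $U \in X$, then $t \in \ff M_U \cap A(X)$ immediately contradicts the hypothesis. Otherwise $U \in \lim(X)$, and Corollary~\ref{dominate} (since $t$ lies in $\ff M_V$ for all, hence for all but finitely many, $V \in X$) gives $t \in J(\lim X) \subseteq \ff M_U$, the same contradiction.

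For $(\Rightarrow)$, I would assume $J(X) = 0$ and build $U$ by a finite intersection argument. For each nonzero $x \in A(X)$, set
$$
C_x \;:=\; \mathcal U(1/x) \cap \patch(X).
$$
Since $\mathcal U(1/x)$ is a basic patch set (both patch-open and patch-closed), each $C_x$ is closed in the patch-quasicompact space $\patch(X)$. I would first check that $A(X) \subseteq U$ for every $U \in \patch(X)$: this is immediate when $U \in X$, and for $U \in \lim(X)$ it follows from Proposition~\ref{not empty} applied to the cofinite subset $X$ of itself, giving $A(X) \subseteq A(\lim X) \subseteq U$. Consequently, for $U \in \patch(X)$ and nonzero $x \in A(X)$, membership $U \in C_x$ is equivalent to $1/x \in U$, hence to $x$ being a unit of $U$, hence to $x \notin \ff M_U$. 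Therefore
$$
\bigcap_{0 \ne x \in A(X)} C_x \;=\; \{U \in \patch(X) : \ff M_U \cap A(X) = 0\},
$$
so it suffices to show this intersection is nonempty.

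By patch-quasicompactness of $\patch(X)$, nonemptiness reduces to the finite intersection property. Given nonzero $x_1,\dots,x_n \in A(X)$, assume for contradiction $C_{x_1}\cap\cdots\cap C_{x_n}=\emptyset$. Then every $V \in X \subseteq \patch(X)$ lies outside some $C_{x_i}$, i.e.\ $x_i \in \ff M_V$ for some $i$; since the remaining factors $x_j \in A(X) \subseteq V$, the product $t := x_1 \cdots x_n$ lies in $\ff M_V$. As $V$ was arbitrary in $X$, we obtain $t \in J(X)$ with $t \ne 0$, contradicting $J(X)=0$. The main obstacle I anticipate is the preliminary identification $U \in C_x \Leftrightarrow x \notin \ff M_U$ for patch-limit $U$: it is exactly here that Proposition~\ref{not empty} is needed, ensuring $A(X)$ is contained in every valuation ring of $\patch(X)$. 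Everything else is standard quasicompactness bookkeeping together with the product trick $t = x_1\cdots x_n$ that converts the combinatorial failure of the FIP into a nonzero element of $J(X)$.
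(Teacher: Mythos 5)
Your proposal is correct and follows essentially the same route as the paper: the forward direction is the identical quasicompactness argument on $\patch(X)$ using the patch-closed sets $\mathcal{U}(1/x)\cap\patch(X)$ with the product trick supplying the finite intersection property, and the backward direction is the observation that ${{J}}(X)\subseteq {\ff M}_U\cap {{A}}(X)$ for any $U\in\patch(X)$ (the paper cites Proposition~\ref{union} where you cite Corollary~\ref{dominate}, but the content is the same). The only cosmetic difference is that you verify the FIP by contradiction, whereas the paper notes directly that $X_{a_1}\cap\cdots\cap X_{a_n}=X_{a_1\cdots a_n}\ne\emptyset$.
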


\begin{proof} %{\bf [Use that $A(X) = A(\patch(X))$].  }
Suppose that ${{J}}(X) = 0$. Then, for each $0 \ne a \in A:={{A}}(X)$, the set $$X_a:= \{V \in \patch(X):a \not \in {\ff M}_V\} = \patch(X) \cap {\cal U}(1/a)$$ is a nonempty  patch closed subset of $\patch(X)$.  Since $A = A(\patch(X))$, for all nonzero $a_1,\ldots,a_n \in A$ we have $X_{a_1} \cap \cdots \cap X_{a_n} = X_{a_1\cdots a_n}$, so the collection $\{X_a:0 \ne a \in A\}$ has the finite intersection property.  
As a patch closed subset of $\Zar(F)$, $\patch(X)$ is quasicompact, so there exists $U \in \bigcap_{0 \ne a \in A}X_a$. For this choice of $U$ we have ${\ff M}_U \cap A = 0$.  
Conversely, if $U \in \patch(X)$ such that ${\ff M}_U \cap A = 0$, then  Proposition~\ref{union} implies  
${{J}}(X)   \subseteq  {\ff M}_U \cap A = 0,$ which proves the corollary.
\end{proof}

In the case in which $A$ has quotient field $F$, we obtain  a simple topological criterion for when ${{J}}(X) = 0$.

\begin{corollary} \label{F cor 2} Let $X$ be a nonempty subset of $\Zar(F)$ such that ${{A}}(X)$  has quotient field $F$.    Then ${{J}}(X)= 0$ if and only if  $F \in \patch(X)$.   
\end{corollary}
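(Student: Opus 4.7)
The plan is to deduce this corollary from the preceding Corollary~\ref{F cor}, which already characterizes $J(X) = 0$ by the existence of some $U \in \patch(X)$ with ${\ff M}_U \cap {{A}}(X) = 0$. The key observation driving both directions is that $F$ itself is an element of $\Zar(F)$ (a trivial valuation ring), and its maximal ideal is zero. Thus the right-to-left implication is essentially free: if $F \in \patch(X)$, then taking $U = F$ in Corollary~\ref{F cor} (where ${\ff M}_U \cap {{A}}(X) = 0$ holds trivially) yields ${{J}}(X) = 0$.

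For the forward direction, I would assume ${{J}}(X) = 0$, apply Corollary~\ref{F cor} to produce some $U \in \patch(X)$ with ${\ff M}_U \cap {{A}}(X) = 0$, and then use the quotient-field hypothesis to upgrade this to $U = F$. The argument would be to show ${\ff M}_U = 0$. Given any $a \in {\ff M}_U$, write $a = b/c$ with $b,c \in {{A}}(X)$ and $c \neq 0$, using the quotient-field assumption. Since ${{A}}(X) \subseteq U$ (see below), $c \in U$, so $b = ac \in {\ff M}_U$ because ${\ff M}_U$ is an ideal of $U$; thus $b \in {\ff M}_U \cap {{A}}(X) = 0$, forcing $a = 0$. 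So ${\ff M}_U = 0$, and since $U$ is a valuation ring of $F$ with zero maximal ideal, $U = F$. Hence $F \in \patch(X)$.

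The only technicality worth noting is the inclusion ${{A}}(X) \subseteq U$ for $U \in \patch(X)$. This follows either directly from Proposition~\ref{union} (each cofinite $Y \subseteq X$ satisfies ${{A}}(X) \subseteq {{A}}(Y)$, and the proposition then puts these into ${{A}}(\lim X)$, giving ${{A}}(\patch X) = {{A}}(X)$) or, more elementarily, from the fact that for each $a \in F$ the set ${\cal U}(a)$ is patch-clopen, so $X \subseteq {\cal U}(a)$ implies $\patch(X) \subseteq {\cal U}(a)$. There is no substantial obstacle; the corollary is a short bootstrap from Corollary~\ref{F cor}, where the quotient-field hypothesis sharpens the existential statement ``some $U$ with ${\ff M}_U \cap {{A}}(X) = 0$'' to the concrete conclusion ``$U = F$''.
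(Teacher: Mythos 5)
Your proposal is correct and follows the paper's own proof exactly: both directions reduce to Corollary~\ref{F cor}, with the forward direction using the quotient-field hypothesis to force ${\ff M}_U = 0$ and hence $U = F$. You merely spell out the step ``${\ff M}_U \cap {{A}}(X) = 0$ implies ${\ff M}_U = 0$'' (and the inclusion ${{A}}(X) \subseteq U$) that the paper leaves implicit.
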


\begin{proof}  Suppose ${{J}}(X) = 0$. By Corollary~\ref{F cor}, there is $U \in \patch(X)$ such that ${\ff M}_U \cap {{A}}(X) = 0$. Since $A(X)$ has quotient field $F$, this implies ${\ff M}_U = 0$ and hence $F = U \in \patch(X)$.  The converse follows from Corollary~\ref{F cor}.   
\end{proof}

A collection $X$ of valuation rings in $\Zar(F)$ has {\it finite character} if  each nonzero $x \in F$ is a unit in all but finitely many valuation rings of $X$.  This notion will be useful in the last section for the  classification  of  subsets of $\Zar(F)$ with finitely many limit points. 

\begin{corollary} \label{FC char} A nonempty subset $X$ of $\Zar(F)$ has finite character if and only if $X$ is finite or $\lim(X) = \{F\}$. 
\end{corollary}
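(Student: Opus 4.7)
The plan is to prove both directions by analysing how the point $F \in \Zar(F)$ interacts with the patch topology, together with the characterisation of $A(\lim X)$ via cofinite subsets given in Proposition~\ref{union}. The key observation to isolate first is that the basic patch-open neighbourhoods of $F$ are exactly the sets of form (a) $\mathcal{U}(x_1,\ldots,x_n)$: types (b) and (c) involve some $\mathcal{V}(y_i)$, and $F$ never lies in any $\mathcal{V}(y_i)$ because $y_i \in F$. Equivalently, $U \in \patch(X)$ equals $F$ if and only if every nonzero element of $F$ lies in $U$.

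For the direction ($\Leftarrow$), the case $X$ finite is immediate, so assume $\lim(X) = \{F\}$, which forces $X$ infinite. Given $0 \neq x \in F$, I consider the set
\[
X_x \;=\; X \cap \bigl(\mathcal{V}(x) \cup \mathcal{V}(1/x)\bigr),
\]
which is precisely the set of $V \in X$ in which $x$ fails to be a unit. If $X_x$ were infinite, Proposition~\ref{union} would give $\lim(X_x) \neq \emptyset$, and since $X_x \subseteq X$ we would have $\emptyset \neq \lim(X_x) \subseteq \lim(X) = \{F\}$. But the basic patch neighbourhood $\mathcal{U}(x,1/x)$ of $F$ is disjoint from $X_x$ by construction, contradicting $F \in \lim(X_x)$. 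Hence $X_x$ is finite, which is finite character.

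For the direction ($\Rightarrow$), assume $X$ has finite character; if $X$ is finite there is nothing to show, so assume $X$ infinite. Proposition~\ref{not empty} guarantees $\lim(X) \neq \emptyset$. For each $0 \neq x \in F$, finite character says the set $Y_x = \{V \in X : x \text{ is a unit in } V\}$ is cofinite in $X$, so $x \in {{A}}(Y_x)$; Proposition~\ref{union} then gives $x \in {{A}}(\lim X)$. Thus ${{A}}(\lim X) = F$, and since every $U \in \lim(X)$ contains ${{A}}(\lim X)$, every such $U$ equals $F$. Combined with nonemptiness, this yields $\lim(X) = \{F\}$.

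There is no real obstacle: the two steps essentially dualise one another via Proposition~\ref{union}, and the main technical point is simply the characterisation of patch neighbourhoods of $F$, which follows directly from the definition of the basis given in the notation for the patch topology.
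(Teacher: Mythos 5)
Your proof is correct. Both directions go through, and the toolkit is the same as the paper's (Proposition~\ref{union}/\ref{not empty} plus elementary facts about patch neighbourhoods of $F$), but you deploy the pieces in mirror image to the paper. For ``$\Rightarrow$'' the paper argues topologically: it takes a hypothetical limit point $V \ne F$, picks $0 \ne t \in {\ff M}_V$, and notes that the patch neighbourhood $\{W : t \in {\ff M}_W\}$ of $V$ meets $X$ in only finitely many points, contradicting Hausdorffness; you instead run the algebraic half of Proposition~\ref{union}, showing every nonzero $x$ lies in ${{A}}(Y_x)$ for a cofinite $Y_x$, hence ${{A}}(\lim X) = F$ and every limit point is $F$. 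For ``$\Leftarrow$'' the paper uses the ${{J}}$-half of Proposition~\ref{union} (a nonzero $x$ would lie in the maximal ideal of the limit point $F$, forcing $x = 0$), after first splitting into the cases $t \in {\ff M}_V$ versus $t^{-1} \in {\ff M}_V$; you avoid that case split by working directly with $X_x = X \cap ({\cal V}(x) \cup {\cal V}(1/x))$ and exhibiting the neighbourhood ${\cal U}(x,1/x)$ of $F$ disjoint from $X_x$, which is slightly cleaner. Neither route is more general than the other; yours trades the Hausdorff counting argument for two applications of the ${{A}}$-formula and one explicit disjoint neighbourhood, and both are complete.
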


\begin{proof}  
The direction ``$\Rightarrow$'' is proved in \cite[Lemma 4.3]{FFL} using ultrafilter methods. We include a proof here that uses the patch topology.  
Suppose that $X$ has finite character and is infinite. 
By Proposition~\ref{not empty}, 
 $\lim(X)$ is nonempty. Let $V \in \lim(X)$. Suppose $V \ne F$
 and choose $0 \ne t \in {\ff M}_V$.  
%Since $V \in \patch(Z\smallsetminus \{V\})$, for each projective model $X$, $V$ is centered on a generic point for a subset  of  $X(Z\smallsetminus \{V\})$ of $Z$. But if $V \ne F$, the fact that $Z$ has finite character implies this subset must be finite, and hence $V \in X(Z \smallsetminus \{V\})$.  
 Then $\{W \in X: t  \in {\ff M}_W\}$ is a patch open subset of $X$ containing $V$.  Since $X$ has finite character, this set is finite.  However,  $X$ is Hausdorff in the patch topology, and so  every open neighborhood of the patch limit point $V$ contains infinitely many members of $X$. This contradiction shows that  $V = F$.

Conversely, suppose that $X$ is infinite and $\lim(X)= \{F\}$. Suppose  that there exists $0 \ne t \in F$ 
and infinitely many valuation rings $V$ in $X$ 
such that $t$ is not a unit in 
  $V$. 
    Then there are infinitely many valuation rings $V \in X$ such that either  $t \in {\ff M}_V$ or $t^{-1} \in {\ff M}_V$.  Choosing  $x = t$ or $x = t^{-1}$ as needed, we conclude that  there exists $0 \ne x \in F$ such that 
   $x \in {\ff M}_V$  for infinitely many $V \in X$.  By Proposition~\ref{union}, the infinite set $\{V \in X:x \in {\ff M}_V\}$ has a patch limit point, which, since $\lim(X) = \{F\}$, must be $F$.  However, by Proposition~\ref{union}, $x$ is in the maximal ideal of any patch limit point of $\{V\in X:x \in {\ff M}_V\}$, so $x = 0$ since the patch limit point $F$ is a field. This contradiction show that $t$ is a unit in all but finitely many valuation rings in $X$, and we conclude that $X$ has finite character.  
\end{proof}

A strategy of proof in Lemma~\ref{finally} is to replace a collection $X=\{W_i:i \in I\}$ of valuation rings in $\Zar(F)$ with valuation rings $Y=\{V_i:i \in I\}$ such that $V_i \subseteq W_i$ for each $i$. The rest of this section is devoted to proving Proposition~\ref{going up}, which describes a relationship between $\lim(X) $ and $\lim(Y)$ in such a circumstance. To this end, we recall a helpful interpretation of the patch topology on $\Zar(F)$  due to 
Finocchiaro, Fontana and Loper \cite{FFL}.   An {\it ultrafilter}  on a set $X$ is a collection ${\cal F}$ of subsets of $X$ such that 
(a) $\emptyset \not \in {\cal F}$,  (a) ${\cal F}$ is closed under finite intersections, (b)  if $A \subseteq B \subseteq X$ and $A  \in {\cal F}$, then $B \in {\cal F}$, and (c) for each $A \subseteq X$, either $A \in {\cal F}$ or $B \in {\cal F}$.  The ultrafilter $\cal F$ is {\it principal} if there is a member of $X$ (necessarily unique) that is an element in every
set in $\cal F$.  
 
\begin{notation} {Let $X$ be a nonempty subset of $\Zar(F)$, and let ${\cal F}$ be an ultrafilter on $X$.  Define $V_{\cal F} = \{x \in F:\{W \in X:x \in W\} \in {\cal F}\}.$ Then $V_{\cal F}$ is a valuation ring with quotient field $F$ \cite[Corollary 3.8]{FFL}.}
\end{notation}

We use the valuation rings $V_{\cal F}$ as technical tools in proving Proposition~\ref{going up}. Specifically, we use the fact that the patch closure of $X$ in $\Zar(F)$ can  be interpreted via these valuation rings as $\patch(X)=\{V_{\cal F}:{\cal F}$ is an ultrafilter on $X\}$ \cite[Corollary 3.8]{FFL}.  The next lemma gives a similar interpretation for $\lim(X)$ involving nonprincipal ultrafilters. 
%Although the lemma is an easy extension of ideas in \cite{FFL} we include details for lack of a precise reference. 

\begin{lemma} \label{uf limit} Let $X$ be an infinite subset of $\Zar(F)$, and let $V \in \Zar(F)$. 

\begin{enumerate}[$(1)$]
\item[{\em (1)}] If $V \in \lim(X)$, then $V = V_{\cal F}$ for some 
nonprincipal ultrafilter  ${\cal F}$  on $X\setminus \{V\}$.

\item[{\em (2)}]  If $V = V_{\cal F}$ for a nonprincipal ultrafilter ${\cal F}$ on an infinite subset $Y$ of $X$, then $V \in \lim(Y)$ and hence $V \in \lim(X)$. 
\end{enumerate}
\end{lemma}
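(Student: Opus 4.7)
The plan is to use the patch basic neighborhoods of $V$ to build a filter base on $X \setminus \{V\}$, extend it to an ultrafilter, and verify directly that the resulting valuation ring equals $V$. The converse will then follow almost formally from the definition of $V_{\cal F}$ together with the key fact that every set in a nonprincipal ultrafilter is infinite.

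For (1), assume $V \in \lim(X)$ and consider the collection $\mathcal{B}$ of subsets of $X \setminus \{V\}$ of the form
\[
B(x_1,\ldots,x_n;y_1,\ldots,y_m) \;=\; (X \setminus \{V\}) \cap {\cal U}(x_1,\ldots,x_n) \cap {\cal V}(y_1) \cap \cdots \cap {\cal V}(y_m),
\]
where $x_1,\ldots,x_n \in V$ and $y_1,\ldots,y_m \in F \setminus V$. These are exactly the intersections of basic patch-open neighborhoods of $V$ with $X \setminus \{V\}$. Because $V \in \lim(X)$, each such set is nonempty (indeed infinite), and $\mathcal{B}$ is closed under finite intersections, so it is a filter base. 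Extend the filter it generates to an ultrafilter ${\cal F}$ on $X \setminus \{V\}$. This ${\cal F}$ is nonprincipal: given any $W \in X \setminus \{V\}$, the patch topology is Hausdorff, so some basic patch neighborhood of $V$ excludes $W$, yielding a member of $\mathcal{B} \subseteq {\cal F}$ that does not contain $W$, which rules out $W$ generating ${\cal F}$. It remains to check $V = V_{\cal F}$. If $x \in V$, then $\{W \in X \setminus \{V\} : x \in W\} = B(x;\,) \in {\cal F}$, so $x \in V_{\cal F}$. Conversely, if $x \notin V$, then $B(\,;x) \in {\cal F}$, so its complement (in $X \setminus \{V\}$) $\{W : x \in W\}$ is not in ${\cal F}$, hence $x \notin V_{\cal F}$. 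Therefore $V = V_{\cal F}$.

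For (2), suppose $V = V_{\cal F}$ with ${\cal F}$ a nonprincipal ultrafilter on an infinite $Y \subseteq X$. Let $N = {\cal U}(x_1,\ldots,x_n) \cap {\cal V}(y_1) \cap \cdots \cap {\cal V}(y_m)$ be any basic patch neighborhood of $V$, so $x_i \in V$ and $y_j \notin V$. By the definition of $V_{\cal F}$, each set $\{W \in Y : x_i \in W\}$ is in ${\cal F}$, and each set $\{W \in Y : y_j \in W\}$ is not in ${\cal F}$; by the ultrafilter axiom, the complement $\{W \in Y : y_j \notin W\}$ is in ${\cal F}$. Intersecting the finitely many sets in ${\cal F}$ gives $Y \cap N \in {\cal F}$. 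Because ${\cal F}$ is nonprincipal, every member of ${\cal F}$ is infinite (otherwise a finite member of ${\cal F}$ would force some singleton into ${\cal F}$, making it principal), so $Y \cap N$ is infinite and in particular meets $Y \setminus \{V\}$. Thus $V \in \lim(Y)$, and since $Y \subseteq X$ we also get $V \in \lim(X)$.

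The genuinely delicate step is the nonprincipality in (1): one needs the Hausdorffness of the patch topology to guarantee that no single valuation ring $W \ne V$ survives in every basic patch neighborhood of $V$. Everything else is bookkeeping with the definitions of $V_{\cal F}$ and of the patch basic opens.
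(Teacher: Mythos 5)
Your proof is correct. Part (2) is essentially the paper's own argument: show that the trace on $Y$ of each basic patch neighborhood of $V$ lies in ${\cal F}$, then invoke the fact that a nonprincipal ultrafilter contains no finite sets. Part (1), however, takes a genuinely different and more self-contained route. The paper disposes of (1) in two sentences by citing the description of the patch closure as $\{V_{\cal F}:{\cal F} \mbox{ an ultrafilter on } X\}$ from Finocchiaro--Fontana--Loper, together with external references for the nonprincipality of the ultrafilter when $V$ is a limit point. You instead build the ultrafilter by hand: the traces on $X\setminus\{V\}$ of the basic patch neighborhoods of $V$ form a filter base (nonempty precisely because $V\in\lim(X)$, and closed under finite intersection because ${\cal U}$ and ${\cal V}$ sets intersect to sets of the same form), you extend to an ultrafilter, and you verify $V=V_{\cal F}$ directly from the two cases $x\in V$ and $x\notin V$, the latter using that an ultrafilter cannot contain both a set and its complement. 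Your nonprincipality argument via Hausdorffness is sound (for $W\ne V$ one can even exhibit the separating basic set explicitly from an element of $V\setminus W$ or of $W\setminus V$). What your approach buys is independence from the cited ultrafilter machinery, making the lemma verifiable from first principles; what the paper's approach buys is brevity and consistency with the ultrafilter viewpoint it uses elsewhere. Both are valid, and your part (2) closes the loop correctly by noting that the infinite set $Y\cap{\cal N}$ necessarily meets $Y\setminus\{V\}$.
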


\begin{proof} (1) Since $V \in \lim(X)$, $V $ is in the patch closure of $X\setminus \{V\}$. Thus, as discussed before the lemma,   there is an ultrafilter ${\cal F}$ on $X \setminus \{V\}$ such that $V = V_{\cal F}$.  This ultrafilter is nonprincipal by 
\cite[Example 2.3]{Fin} and \cite[Remark 3.2]{FFL}.

%Suppose by way of contradiction that  ${\cal F}$ is a principal ultrafilter. Then there is $W \in X \setminus \{V\}$ such that $\{W\} \in {\cal F}$ and $W$ is a member of every set in ${\cal F}$.  We claim that $W = V_{\cal F}$.  If $x \in W$, then $\{W\} \subseteq \{U \in X \setminus \{V\}:x \in U\}.$ Since ${\cal F}$ is a filter and $\{W\} \in {\cal F}$, this implies $\{U \in X \setminus \{V\}:x \in U\} \in {\cal F}$. Hence $x \in V_{\cal F}$, and $W \subseteq V_{\cal F}$.    To prove the reverse inclusion,  let $x \in V_{\cal F}$. Then $\{U \in X \setminus \{V\}:x \in U\} \in {\cal F}$. Since every set  in ${\cal F}$ contains $W$, this implies $x \in W$.  This shows  $V =V_{\cal F} =W \in X \setminus \{V\}$, a contradiction. Thus ${\cal F}$ is a nonprincipal ultrafilter. 

(2) Suppose $V = V_{\cal F}$ for a nonprincipal ultrafilter ${\cal F}$ on an infinite subset $Y$ of $X$.  We show every patch open subset of $Y$ containing $V$ contains infinitely many elements of $Y$.  Every patch open subset of $Y$ containing $V$ contains a basic open neighborhood of the form  ${\cal N} = {\cal U}(x_1,\ldots, x_n) \cap {\cal V}(y_1) \cap \cdots \cap V(y_m)$,  where $x_1,\ldots,x_n,y_1,\ldots,y_m \in F$.  We show $Y \cap {\cal N}$ is infinite. 
Since $x_1,\ldots,x_n \in V_{\cal F}$ and the filter ${\cal F}$ is closed under finite intersections, we have $$Y \cap {\cal U}(x_1,\ldots,x_n) \: = \: \bigcap_{i=1}^n \: \{U \in Y:x_i \in U\} \: \in \: {\cal F}.$$
Since  $y_i \not \in V_{\cal F}$ for each $i$, we have 
$\{U \in Y:y_i  \in U\} \not \in {\cal F}.$ 
  Again since ${\cal F}$ is an ultrafilter on $Y$,  this implies $$Y \cap {\cal V}(y_1) \cap \cdots \cap {\cal V}(y_m) = \bigcap_{i=1}^m\{U \in Y:y_i \not \in U\} \in {\cal F}.$$
  Since ${\cal F}$ is closed under finite intersections, we conclude that  $$Y \cap {\cal N} = Y \cap {\cal U}(x_1,\ldots,x_n) \cap {\cal V}(y_1) \cap \cdots \cap {\cal V}(y_m) \in {\cal F}.$$ Since ${\cal F}$ is a nonprincipal ultrafilter, ${\cal F}$ contains no finite subsets and hence $Y \cap {\cal N}$ is infinite, as claimed.
%   If instead ${\cal N}$ is taken to be a basic open neighborhood of $V$ of the form ${\cal V}(y_1) \cap \cdots \cap {\cal V}(y_n)$, a similar argument shows that $Y \cap {\cal N}$ is infinite.   
%  Thus every basic open neighborhood of $V = V_{\cal F}$ in $\Zar(F)$ contains infinitely many points of $Y$,  which proves that $V \in \lim(Y)$. Since $Y \subseteq X$, $V$ is also in $\lim(X)$.    
\end{proof}

\begin{proposition} \label{going up}  Let  $Y=\{V_i:i \in I\}$ and $X = \{W_i:i \in I\}$
be  subsets of $\Zar(F)$ such that $V_i \subseteq  W_i$ for all $i \in I$. (The $V_i$ are assumed to be distinct, as are the $W_i$.)

\begin{enumerate}[$(1)$]
\item 
For each  $W \in \lim(X)$ there exists $V \in \lim(Y)$ such that $V \subseteq W$.

%\item[{\em (2)}]  If $Z \subseteq \gen(Y)$ and the valuation rings in $Z$ are incomparable with respect to set inclusion, then $\lim(Z) \subseteq \gen(\lim Y)$.  

\item %Suppose there is a subset $Z'$ of $Z$ and  an index set $I$ such that $Y = \{V_i:i \in I\}$, $Z' = \{W_i:i \in I\}$ and $V_i \subseteq W_i$ for all $i \in I$ (with the $V_i$ distinct and the $W_i$ distinct). 
For each $V \in \lim(Y)$ there exists $W \in \lim(X)$ such that $V \subseteq W$.  
\end{enumerate}
 
\end{proposition}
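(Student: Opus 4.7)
The plan is to exploit the ultrafilter description of $\lim$ from Lemma~\ref{uf limit} and transport ultrafilters between $X$ and $Y$ along the bijection $\phi : V_i \mapsto W_i$ induced by the common indexing (well-defined and bijective because the $V_i$ are distinct and the $W_i$ are distinct). Given any $J \subseteq I$ and any ultrafilter ${\cal F}$ on $\{W_i : i \in J\}$, the pull-back ${\cal G}$ on $\{V_i : i \in J\}$ defined by $A \in {\cal G}$ iff $\{W_i : V_i \in A\} \in {\cal F}$ is again an ultrafilter, and ${\cal G}$ is nonprincipal if and only if ${\cal F}$ is, since the bijection $\phi$ sends singletons to singletons. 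The crucial inclusion $V_i \subseteq W_i$ gives $V_{\cal G} \subseteq V_{\cal F}$: for $x \in V_{\cal G}$, the set $\{V_i : i \in J, \ x \in V_i\}$ lies in ${\cal G}$, so its image $\{W_i : i \in J, \ x \in V_i\} \subseteq \{W_i : i \in J, \ x \in W_i\}$ lies in ${\cal F}$, whence $x \in V_{\cal F}$. A symmetric push-forward construction along $\phi$ works in the reverse direction.

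For (1), given $W \in \lim(X)$, Lemma~\ref{uf limit}(1) supplies a nonprincipal ultrafilter ${\cal F}$ on $X \setminus \{W\} = \{W_i : i \in J\}$, where $J = \{i \in I : W_i \ne W\}$, with $W = V_{\cal F}$. The set $J$ is infinite (a nonprincipal ultrafilter has no finite members), so $\{V_i : i \in J\}$ is infinite by distinctness. Pulling back to the nonprincipal ultrafilter ${\cal G}$ on $\{V_i : i \in J\}$ and applying Lemma~\ref{uf limit}(2) gives $V := V_{\cal G} \in \lim(\{V_i : i \in J\}) \subseteq \lim(Y)$, and the displayed inclusion above yields $V \subseteq W$.

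For (2), given $V \in \lim(Y)$, Lemma~\ref{uf limit}(1) supplies a nonprincipal ultrafilter ${\cal G}$ on $Y \setminus \{V\} = \{V_i : i \in J\}$ with $V = V_{\cal G}$, where $J = \{i \in I : V_i \ne V\}$. Push ${\cal G}$ forward along $\phi$ to the ultrafilter ${\cal F}$ on $\{W_i : i \in J\}$. Here is where the other half of the distinctness hypothesis enters: if ${\cal F}$ were principal at $W_k$, then the preimage $\{V_i : W_i = W_k\} = \{V_k\}$ would lie in ${\cal G}$, contradicting nonprincipality of ${\cal G}$. Thus ${\cal F}$ is nonprincipal on the infinite set $\{W_i : i \in J\}$, and Lemma~\ref{uf limit}(2) gives $W := V_{\cal F} \in \lim(X)$, with $V \subseteq W$ by the same inclusion computation as before. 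The main obstacle in the argument is verifying nonprincipality of the transported ultrafilters, which is exactly where the distinctness of indexing is used.
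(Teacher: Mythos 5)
Your proposal is correct and follows essentially the same route as the paper: both arguments use Lemma~\ref{uf limit} to realize the given limit point as $V_{\cal F}$ for a nonprincipal ultrafilter, transport that ultrafilter through the common indexing (the paper works with ultrafilters on the index set $I'$, you work with the induced bijection $V_i \mapsto W_i$, which is the same thing), invoke distinctness to preserve nonprincipality, and verify the inclusion $V \subseteq W$ by the identical filter computation. No further changes are needed.
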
  

\begin{proof}  (1) If $I$ is finite (so that $X$ is finite), then $\lim(X)$ is empty  and there is nothing to show. Thus we assume  that $I$ is infinite. 
Let $W \in \lim(X)$.  
If $W \in X$, so that $W = W_i$ for some $i$, let $I' = I \setminus \{i\}$. Otherwise, if
 $W \not \in X$, let $I' = I$. 
 By Lemma~\ref{uf limit}(1), there exists a nonprincipal ultrafilter ${\cal F}$ on $I'$ such that $$W = \{x \in F:\{i \in I':x \in W_i\} \in {\cal F}\}.$$  
 %By assumption,  for each $i \in I$ there exists $V_i \in Y$ with $V_i \subseteq W_i$.  If $V_i = V_j$ for some $i,j \in I$, then $V_i \subseteq W_i \cap W_j$, so that since $V_i$ is a valuation ring, $W_i$ and $W_j$ are comparable with respect to set inclusion. Since the $W_i$ are incomparable,  this forces $i = j$, and we conclude the $V_i$ are distinct. 
   Let $$V = \{x \in F:\{i \in I':x \in V_i\} \in {\cal F}\}.$$ Since ${\cal F}$ is a nonprincipal ultrafilter on $I'$ and the valuation rings $V_i$ are distinct,  Lemma~\ref{uf limit}(2) implies that $V \in  \lim(Y)$.  
Finally, if $x \in V$, then $\{i \in I':x \in V_i\} \in {\cal F}$.   
 Since $V_i \subseteq W_i$ for each $i \in I'$, we have $\{i \in I':x \in V_i\} \subseteq \{i \in I':x \in W_i\}$. Since ${\cal F}$ is a filter, $\{i \in I':x \in W_i\} \in {\cal F}$. Thus $x \in W$,  
 proving that $V \subseteq W$.  
 %
% (2) Suppose $Z \subseteq \gen(Y)$ and the valuation rings in $Z$ are incomparable. Write $Z = \{W_i:i \in I\}$ for some index set $I$, with all the $W_i$ distinct. Since $Z \subseteq \gen(Y)$, for each $i \in I$ there exists $V_i \subseteq W_i$.  Suppose $V_i = V_j$ for some $i,j \in I$.  Then $V_i \subseteq W_i \cap W_j$, and 
%since $V_i$ is a valuation ring, this forces $W_i \subseteq W_j$ or $W_j \subseteq W_i$. Since $W_i$ and $W_j$ are incomparable, this we have $W_i = W_j$ and hence $i = j$.  Therefore, all the $V_i$ are distinct. 
%Finally, let $U \in \lim(Z)$.  By (1) there exists $V \in \lim(Y)$ such that $V %\subseteq U$. Thus $U \in \gen(\lim Y)$.  
 %

(2) The proof is similar to (1).   Let $V \in \lim(Y)$. If $V \in Y$, say  $V = V_i$, let $I' = I \setminus \{i\}$; otherwise, if
 $V \not \in Y$, let $I' = I$. 
  By Lemma~\ref{uf limit}(1)  there is a nonprincipal ultrafilter ${\cal F}$ on $I'$ such that 
$$V = \{x \in F:\{i \in I':x \in V_i\} \in {\cal F}\}.$$  
  Let 
$$W = \{x \in F:\{i \in I':x \in W_i\} \in {\cal F}\}.$$  
  By Lemma~\ref{uf limit}(2),  $W \in \lim(X)$. As in (1), we have $V \subseteq W$.     
\end{proof}

\section{Limit points in local intersections}

 In this section we consider local domains that are intersections of valuation rings in $\Zar(F)$.  One of the main results of the section, Lemma~\ref{finally}, shows that the limit points of a representation $X$ of $A$ determine a radical ideal of $A$.  If this radical ideal is nonzero, this in turn implies that the intersection of the patch limit points of $X$ is a fractional ideal of $A$. This technical observation is the basis for the applications in the rest of the article.

%  representation of integrally closed local subrings $A$ of the field $F$ by valuation rings in $\Zar(F)$.  As we show in Lemma~\ref{finally} and the next section, such a ring $A$ is either a valuation ring of $F$ or   its representations in $\Zar(F)$ must 
%meet special topological requirements.
   
   Because it is needed in  the first two lemmas in this section,
we recall the notion of a projective model.  Let $D$ be a subring of the field $F$, and let $t_0,t_1,\ldots,t_n$ be nonzero elements of $ F$. For each $i$, let $D_i =D[t_0/t_i,\ldots,t_n/t_i]$, and let $${\cal M} \: = \: \bigcup_{i=0}^n \: \{(D_i)_P: P \in \Spec(D_i)\}.$$ Then ${\cal M}$ is {\it the projective model of $F/D$ defined by $t_0,t_1,\ldots,t_n$.}    Alternatively, a projective model of $F/D$ can be viewed as a projective integral scheme over $\Spec(D)$ whose function field is a subfield of $F$. 
%Motivated by this interpretation, it is often convenient to view $X$ as a locally ringed space; see for example \cite[Section 3]{OZR}.  We do not need to do this explicitly in the present paper, but the notation remains helpful here when we wish to view the local rings in $X$ as points. In particular, viewing the set $X$ as a topological space with respect to the Zariski topology\footnote{The basis for this topology is given by sets of the form $\{R \in X:x_1,\ldots,x_n \in R\}$, where $x_1,\ldots,x_n \in F$.}, the local rings in $X$ are points in $X$.  For this reason, and in keeping with the locally ringed space point of view, we denote a local ring $x \in X$ by $\O_{X,x}$.    
%A subset $Y$ of $X$ is an {\it affine submodel} of $X$ if there exists a finitely generated $D$-subalgebra $R$ of $F$ such that $Y = \{R_P:P \in \Spec(R)\}$.  
For each local ring $B$ in  ${\cal M}$ there is a valuation ring $V$ in $\Zar(F)$ that {\it dominates} $B$ (i.e., the maximal ideal of $B$ is  a subset of ${\ff M}_V$), and for each $V \in \Zar(F)$ there exists a unique $B \in {\cal M}$ such that $V$ dominates $B$; see \cite[pp.~119--120]{ZS} or apply the valuative criterion for properness \cite[Theorem 4.7, p.~101]{Hart}. 
 For a nonempty subset $X$ of $\Zar(F)$, we denote by ${\cal M}(X)$ the set of all $B \in {\cal M}$ such that $B$ is dominated by some $V \in X$; that is, ${\cal M}(X)$ is the image of $X$ under the continuous closed surjection $\Zar(F) \rightarrow {\cal M}$ given by the domination mapping \cite[Lemma 5, p.~119]{ZS}.

\begin{lemma} {\em \cite[Lemma 3.1 and Theorem 3.2]{OCpt}} \label{principalization}   \label{trans}
Let $\emptyset \ne X \subseteq \Zar(F)$,   let $D$ be a subring of $F$ and let $t_0,\ldots,t_n \in F \setminus \{0\}$.   
\begin{enumerate}[$(1)$] 

\item Suppose ${{A}}(X)$ 
 is a local ring. Let 
  ${\cal M}$ be 
  the projective model of $F/D$ defined by $t_0,\ldots,t_n.$ If ${\cal M}(X)$ is finite, then $t_0/t_i,\ldots,t_n/t_i \in {{A}}(X)$ for some $i = 0,1,\ldots,n$.

\item Suppose
  $t _0\not \in {{A}}(X)$, $1/t_0 \not \in {{A}}(X)$,   
 $D$ is  local, and 
  all but at most finitely many $V \in X$ dominate $D$.  
 Then there exists $U \in \lim(X)$ such that $t_0,1/t_0 \in U$, $U$ dominates $D$ and  the image of $t_0$ in $U/{\ff M}_U$ is transcendental over the residue field of $D$. 
 
 \end{enumerate}

\end{lemma}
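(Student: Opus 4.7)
For (1), the plan is to exploit the covering of $X$ by chart-open sets together with a semilocal analysis. For any valuation ring $V$ of $F$, the element of minimum value among $t_0,\ldots,t_n$ at $V$ -- say $t_{i(V)}$ -- satisfies $t_j/t_{i(V)} \in V$ for every $j$. Hence $X = \bigcup_{i=0}^n Y_i$ with $Y_i := X \cap {\cal U}(t_0/t_i,\ldots,t_n/t_i)$, and the desired conclusion is equivalent to $Y_i = X$ for some $i$. Enumerate ${\cal M}(X) = \{B_1,\ldots,B_m\}$ with $B_k$ lying on chart $D_{i_k}$, and set $C := B_1 \cap \cdots \cap B_m$. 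Since every $V \in X$ dominates some $B_k$, we have $C \subseteq {{A}}(X)$. Localness of ${{A}}(X)$ forces the contraction of its maximal ideal to lie in a unique maximal $\mathfrak{q}_{k_0}$ of the semilocal ring $C$, and every element of $C \setminus \mathfrak{q}_{k_0}$ is a unit of ${{A}}(X)$. A standard semilocal-intersection analysis -- after reducing, if needed, to the antichain of inclusion-maximal elements of ${\cal M}(X)$ -- identifies $C_{\mathfrak{q}_{k_0}}$ with $B_{k_0}$, so $B_{k_0} \subseteq {{A}}(X)$ and hence $t_j/t_{i_{k_0}} \in {{A}}(X)$ for all $j$. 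The main bookkeeping hurdle is handling possible inclusions among the $B_k$ in the identification $C_{\mathfrak{q}_{k_0}}=B_{k_0}$; this is resolved by the antichain reduction.

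For (2), the plan is to construct $U$ as a valuation of $F$ dominating the generic point of a projective $\mathbb{P}^1$-model. Let ${\cal M}'$ denote the projective model of $F/D$ defined by $t_0$ and $1$, and let $\xi$ be the generic point of the special fiber $\mathbb{P}^1_\kappa$ (where $\kappa := D/{\ff M}_D$). The local ring at $\xi$ is $R_\xi := (D[t_0])_{{\ff M}_D D[t_0]}$, with residue field $\kappa(t_0)$. By the valuative criterion of properness, some $U \in \Zar(F)$ dominates $\xi$, and any such $U$ automatically dominates $D$, contains both $t_0$ and $1/t_0$, and has the image of $t_0$ in $U/{\ff M}_U$ transcendental over $\kappa$.

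The real content of (2), and the main obstacle, is arranging $U \in \lim(X)$. My plan is to invoke Lemma~\ref{uf limit}(2) by producing a nonprincipal ultrafilter ${\cal F}$ on an infinite subset $Y \subseteq X$ whose associated valuation $V_{\cal F}$ dominates $\xi$. The requisite membership conditions assemble into the family
\[
{\cal S} := \{\,\{V \in Y : f \in V\} : f \in R_\xi\,\} \;\cup\; \{\,\{V \in Y : g \in {\ff M}_V\} : g \in {\ff M}_\xi\,\},
\]
and the essential technical step is to verify that every finite intersection of members of ${\cal S}$ is infinite. This is the crux: it exploits the witnesses $V_1 \in X$ with $1/t_0 \in {\ff M}_{V_1}$ and $V_2 \in X$ with $t_0 \in {\ff M}_{V_2}$ furnished by $t_0,1/t_0 \notin {{A}}(X)$, together with the hypothesis that all but finitely many $V \in X$ dominate $D$, to reduce a putative failure of the FIP to a contradiction with Corollary~\ref{dominate}, by showing that such a failure would force some polynomial in $t_0$ with $D$-coefficients (reducing to a nonzero element of $\kappa[T]$) into the maximal ideal of a cofinite subfamily of $X$, thereby placing $t_0$ or $1/t_0$ inside ${{A}}(X)$. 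Once the FIP is secured, any nonprincipal ultrafilter refining ${\cal S}$ yields the desired $U = V_{\cal F} \in \lim(X)$ via Lemma~\ref{uf limit}(2).
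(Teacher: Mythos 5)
First, a point of comparison that matters here: the paper gives no proof of this lemma at all --- it is imported wholesale as a citation of \cite[Lemma 3.1 and Theorem 3.2]{OCpt} --- so your attempt can only be judged on its own merits. For part (1), your strategy is the natural one, but the step you actually gloss over is not the one you flag. The difficulty is not ``inclusions among the $B_k$''; it is the assertion that $C=B_1\cap\cdots\cap B_m$ is semilocal with $C_{\mathfrak{q}_{k_0}}=B_{k_0}$. That requires knowing that the finite set ${\cal M}(X)$ is contained in a single affine submodel of ${\cal M}$, so that each $B_k$ is a localization $E_{P_k}$ of one finitely generated $D$-algebra $E$; only then is $\bigcap_k E_{P_k}$ the localization of $E$ at $E\setminus\bigcup_kP_k$, whose further localizations at its maximal ideals recover the maximal $B_k$. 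This is a classical fact about projective models (\cite[Ch.~VI]{ZS}), so the gap is reparable, but as written the ``standard semilocal-intersection analysis'' is carrying the entire weight. (Once $E$ is in hand there is a shorter finish: $P:={\ff M}_{{{A}}(X)}\cap E$ lies in $\bigcup_kP_k$ because every nonunit of the local ring ${{A}}(X)$ lies in some ${\ff M}_V$ with $V\in X$, so prime avoidance gives $P\subseteq P_{k_0}$ and $B_{k_0}=E_{P_{k_0}}\subseteq E_P\subseteq{{A}}(X)$.)

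Part (2) has a genuine gap exactly at the point you identify as the crux, and I do not see how your sketch closes it. The compactness argument needs: for every $f\in D[T]$ with nonzero reduction $\bar f$ over $D/{\ff M}_D$, infinitely many $V\in X$ dominate $D$, contain $t_0$ and $1/t_0$, and satisfy $f(t_0)\notin{\ff M}_V$. Your proposed contradiction --- that a failure puts $f(t_0)$ into ${\ff M}_V$ for cofinitely many $V$ and ``thereby places $t_0$ or $1/t_0$ inside ${{A}}(X)$'' --- does not follow: a $V$ dominating $D$ can have $t_0$ as a unit whose residue is \emph{algebraic} over $D/{\ff M}_D$, and then $f(t_0)\in{\ff M}_V$ for a suitable $f$ while neither $t_0$ nor $1/t_0$ lies in ${\ff M}_V$. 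The hypotheses $t_0,1/t_0\notin{{A}}(X)$ supply only two witness valuations and constrain no cofinite subfamily of $X$, so Corollary~\ref{dominate} gives you no leverage. In fact the statement as transcribed in this paper cannot be proved as it stands: take $t_0=1/d$ with $0\ne d\in{\ff M}_D$, let all but one member of $X$ dominate $D$ (forcing $t_0\notin{{A}}(X)$), and let the single exception omit $d$ (forcing $1/t_0\notin{{A}}(X)$); then no $U$ dominating $D$ can contain $t_0$. Any correct proof must therefore use hypotheses present in \cite[Theorem 3.2]{OCpt} but suppressed here (in every application in this paper, $D={{A}}(X)$ is local, which rules out such degeneracies and, via part (1), forces the relevant sets to be infinite). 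Relatedly, your construction of $\xi$ presumes ${\ff M}_DD[t_0]$ is prime with residue field $(D/{\ff M}_D)(t_0)$ --- which is essentially the residual transcendence you are trying to produce, not something given.
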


 \begin{lemma} \label{finally} 
If $X $ is an infinite subset of   $ \Zar(F)$ such that ${{A}}(X)$ is a local ring, then ${{J}}(\lim X) \subseteq {{A}}(X).$ 
%Then 
%\begin{enumerate}
%\item[{\em (1)}]  $J$ is an ideal of $A$. 
%
%\item[{\em (2)}] If also $Z$ is a minimal patch closed representation of $A$ and  $\End(J) = A$, then  $Z = \lim(Z)$ and $Z$ is uncountable. 
%
%\end{enumerate} 
% where $U$ ranges over the patch  limit points of $Z$ in  $\Zar(F)$. 

\end{lemma}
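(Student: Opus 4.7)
The plan is to argue by contradiction: suppose $x\in J(\lim X)$ but $x\notin A:=A(X)$. By Proposition~\ref{union} applied to the $J$-identity, there is a cofinite $Y_0\subseteq X$ with $x\in{\ff M}_V$ for every $V\in Y_0$; hence $1/x\notin V$ for every such $V$, and therefore $1/x\notin A$.

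Following the strategy announced in the paragraph preceding the statement, I would replace $X$ by a family $Y=\{V_W:W\in X\}$ of valuation rings that all dominate $A$, indexed by $X$ with $V_W\subseteq W$, still representing $A$. For each $W\in X$, because $W\supseteq A$ and $A$ is local, the composition-of-valuations construction supplies a valuation ring $V_W\subseteq W$ of $F$ dominating $A$: take $V_W=W$ when $W$ already dominates $A$, and otherwise let $V_W$ be the preimage in $W$ of a valuation ring of the residue field $W/{\ff M}_W$ that dominates the local, non-field image of $A$ there. Choosing the $V_W$ pairwise distinct (using the abundance of such sub-valuation rings when $W$ does not itself dominate $A$), the indexed pair $(Y,X)$ satisfies the hypotheses of Proposition~\ref{going up}. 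Since $A\subseteq V_W\subseteq W$ for every $W\in X$, we have $A(Y)=A$.

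Lemma~\ref{trans}(2) applied to $Y$ with $D=A$ and $t_0=x$ has all hypotheses in place ($x,1/x\notin A(Y)=A$; $D$ is local; every $V\in Y$ dominates $D$), so it furnishes $U\in\lim(Y)$ with $x,1/x\in U$ and the residue of $x$ in $U/{\ff M}_U$ transcendental over $A/{\ff m}$. In particular $x$ is a unit in $U$, so $x\notin{\ff M}_U$. By Proposition~\ref{going up}(2) applied to $(Y,X)$, there is $W_0\in\lim(X)$ with $U\subseteq W_0$. From $x\in J(\lim X)$ we have $x\in{\ff M}_{W_0}$; combined with $x\in U\subseteq W_0$ and the elementary inclusion $U\cap{\ff M}_{W_0}\subseteq{\ff M}_U$ valid whenever $U\subseteq W_0$, this forces $x\in{\ff M}_U$, contradicting the preceding sentence.

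The main obstacle is the construction of $Y$: while producing each individual $V_W\subseteq W$ dominating $A$ is routine, arranging the $V_W$ to be pairwise distinct (so that Proposition~\ref{going up} applies directly) requires a choice argument leveraging the fact that each $W\in X$ not dominating $A$ admits many valuation subrings dominating $A$---one for each valuation ring of $W/{\ff M}_W$ dominating the image of $A$.
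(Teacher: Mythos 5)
Your overall route is the paper's: replace $X$ by a family $Y$ of composite valuation rings dominating $A:=A(X)$, invoke Lemma~\ref{trans}(2) to produce $U\in\lim(Y)$ in which $x$ is a unit, and push $U$ up into $\lim(X)$ via Proposition~\ref{going up}(2). Your opening observation is a genuine (if small) improvement: deducing $1/x\notin A$ directly from $x\in J(\lim X)$ via Proposition~\ref{union} lets you dispense with the paper's Case 1 (the case $1/t\in A$) entirely.

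However, the step you flag as the ``main obstacle'' is a real gap, and the remedy you sketch does not work. It is false in general that a $W\in X$ not dominating $A$ admits \emph{many} valuation subrings dominating $A$: Chevalley's theorem gives at least one valuation ring of $W/{\ff M}_W$ dominating the image of $A$, but there may be exactly one (for instance when that image, suitably localized, is already a valuation ring of $W/{\ff M}_W$), so there is no room to choose. Note also that a collision $V_{W_1}=V_{W_2}=V$ forces $W_1$ and $W_2$ to be comparable, since both are overrings of the valuation ring $V$; and $X$ may contain many comparable pairs. Distinctness is not cosmetic here: Proposition~\ref{going up} rests on Lemma~\ref{uf limit}(2), which requires a nonprincipal ultrafilter on an infinite set of \emph{distinct} valuation rings, and if the $V_W$ collapse to a finite set then $\lim(Y)=\emptyset$ and both Lemma~\ref{trans}(2) and Proposition~\ref{going up}(2) yield nothing. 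The paper's fix is to pass first to $\patch(X)$ and then to its set $\{W_i\}$ of minimal elements: these are pairwise incomparable, still intersect to $A$, and then $V_i=V_j\subseteq W_i\cap W_j$ forces $W_i=W_j$, so distinctness of the $V_i$ is automatic. That reduction introduces one further case your argument would also have to absorb: the set of minimal elements may be finite, in which case $A$ is a finite local intersection of valuation rings, hence itself a valuation ring of $F$, and $J(\lim X)\subseteq A$ is immediate because ${\ff M}_W\subseteq A$ for every valuation overring $W$ of $A$.
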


 \begin{proof}   
 By Proposition~\ref{not empty}, the set $\lim(X)$  is nonempty since $X$ is infinite.   Let 
 %$J = {{J}}(\lim X)$ and 
 $A = {{A}}(X).$ Most of the proof is devoted to a reduction to the case in which every valuation ring in  $X$ dominates $A$.  To make this reduction  we 
 construct a set $Y$ of valuation rings in $\Zar(F)$ that dominate $A$ and such that $Y$ has the property  that   $${{J}}(\lim X) \: \subseteq \: {{J}}(\lim Y) \: \subseteq \: A.$$

By Proposition~\ref{union}, all the valuation rings in $\lim(X)$ contain $A$. Also, since the patch topology on $\Zar(F)$ is Hausdorff, $X$ and its patch closure have the same limit points in $\Zar(F)$. Thus the ideal ${{J}}(\lim(X))$  is the same regardless of whether $X$ or its patch closure is used. 
 We may assume without loss of generality that $X$ is patch closed in  $\Zar(F)$.  

Since $X$ is patch closed,  
 $X$ has minimal elements with 
 respect to set inclusion (see Remark~\ref{min notation}). Let $\{W_i:i \in I\}$ denote the set of minimal elements of $X$.  For each $i \in I$ we have 
 $$A/(A \cap {\ff M}_{W_i}) \cong (A + {\ff M}_{W_i})/{\ff M}_{W_i} \subseteq  W_{i}/{\ff M}_{W_i}.$$  Chevalley's Extension Theorem \cite[Theorem 3.1.1, p.~57]{EP} implies there is a valuation ring $T_i$ of the field $W_i/{\ff M}_{W_i}$ such that $T_i$ dominates the local ring $(A + {\ff M}_{W_i})/{\ff M}_{W_i}$.
Let  $V_i$ be the subring of $W_i$ such that $V_i/{\ff M}_{W_i} = T_i$. Then $V_i$ is a valuation ring in $\Zar(F)$ such that 
 $V_i \subseteq W_i$ and $V_i$ dominates $A$. 
  Set $Y = \{V_i:i \in I\}$.  Since the $W_i$ are the minimal elements of $X$ and  $A \subseteq V_i \subseteq W_i$ for each $i$, we have $$A \: = \: \bigcap_{V \in X}V \:= \: \bigcap_{i \in I}W_i \: = \: \bigcap_{i \in I}V_i.$$
 
 Suppose the index set $I$ is finite. Then $A$ is a finite intersection of the valuation rings $W_i$, and hence, as a local ring, $A$ is a valuation ring with quotient field $F$  \cite[(11.11), p.~38]{N}. In this case, for every valuation overring $V$ of the valuation ring $A$, we have ${\ff M}_V \subseteq A$, so that the lemma is clear. 
 
 We assume now  that  the index set $I$ is infinite. By Proposition~\ref{not empty}, $\lim(Y)$ is nonempty. We claim  that ${{J}}(\lim X) \subseteq {{J}}(\lim Y)$. 
 %To prove this, it is enough to show that 
 % if $V \in \lim(Y)$, then there exists $W \in \lim(X)$ such that $V \subseteq W$ and hence ${\ff M}_W \subseteq {\ff M}_V$.
The set  $Y=\{W_i:i \in  I\}$ is infinite and  consists of incomparable valuation rings.  This implies that the $V_i$ are distinct, since if $V_i = V_j$ for some $i,j \in I$, then $V_i \subseteq W_i \cap W_j$. The overrings of a valuation ring form a chain under set inclusion, so necessarily $W_i$ and $W_j$ are comparable, which forces $i = j$. 
Therefore, the $V_i$ are distinct, as are the $W_i$.  Proposition~\ref{going up}(2) implies that for each $V \in \lim(Y)$ there is $W \in \lim(X)$ such that $V \subseteq W$ and  hence ${\ff M}_W \subseteq {\ff M}_V$.  
  %  Let $V \in \lim(Y)$. 
% If $V_i = V_j$ for some $i,j \in I$, then $V_i \subseteq W_i \cap W_j$, and since the $W_i$'s are incomparable (they are minimal elements of $X$) and the overrings of the valuation ring $V_i$ form a chain with respect to set inclusion, it must be that $W_i = W_j$, and hence $i = j$.  Thus all the $V_i$'s are distinct. Since $V$ is  a limit point of the $V_i$, we have     by {\bf [ref; include in prelims]} that there exists a 
%  nonprincipal ultrafilter ${\cal F}$ on $I$ such that $$V = \{x \in F:\{i \in I:x \in V_i\} \in {\cal F}\}.$$   Now let $$W = \{x \in F:\{i \in I:x \in W_i\} \in {\cal F}\}.$$
 % Then $V \subseteq W$ and by {\bf [ref]}, $W$ is a valuation ring that is a limit point for the set  $\{W_i:i \in I\}$. Since the $W_i$ are members of $X$, we conclude that $W \in \lim(X)$.  
It follows that ${{J}}(\lim X) \subseteq {{J}}(\lim Y)$. 
  
Therefore,  to complete the proof that ${{J}}(\lim X) \subseteq A$, it suffices to show  ${{J}}(\lim Y) \subseteq A$.  Since  $A = \bigcap_{i \in I}V_i$,  we have that $Y = \{V_i:i \in I\}$ is an infinite representation of $A$ such that each valuation ring in $Y$ dominates $A$.    
  Let $t \in F$ such that $t \not \in A$. We show that $t \not \in {{J}}(\lim Y)$ by considering two cases. 

\smallskip  
  
{\textsc{Case 1}}:  $1/t \in A$.

\smallskip

 In this case, since $t \not \in A$, $1/t$ is not a unit in $A$. Hence $1/t$ is an element of the maximal ideal of $A$.   Since  each valuation ring in $Y$ dominates $A$, Corollary~\ref{dominate} implies every valuation ring in $\lim(Y)$ dominates $A$. Since $1/t$ is in the maximal ideal of $A$, we conclude that $1/t 
 \in {{J}}(\lim Y)$. Thus $t \not \in  {{J}}(\lim Y)$.  %This shows that if $t \not \in A$ and $1/t \in A$, we have $t \not \in \bigcap_{U \in \lim(Y)} {\ff M}_U$. 
 
 \medskip
 
 {\textsc{Case 2}}:  $1/t \not \in A$.
 
 \smallskip

 Let ${\cal M}$ be the projective model of $F/A$ defined by $1,t$. Since $t \not \in A$ and $1/t \not \in A$, Lemma~\ref{principalization}(1) implies that  the image ${\cal M}(Y)$ of $Y$ in ${\cal M}$ is infinite. 
Since every valuation ring in $Y$ dominates $A$, Lemma~\ref{trans}(2)  (applied to $D= A$) implies there exists $U \in \lim(Y)$ such $t$ is a unit in $U$.  
Consequently, $t \not \in {{J}}(\lim Y)$. 

\smallskip

%Thus in this case also we have $t \not \in \bigcap_{U \in \lim(Y)} {\ff M}_U$. 
We have shown that in both cases if $t \not \in A$ then we have $t \not \in {{J}}(\lim Y)$. It follows that ${{J}}(\lim X) \subseteq {{J}}(\lim Y) \subseteq A$, which completes the proof. 
\end{proof}

%\section{Local completely integrally closed domains}

% In this section we  give some examples of how Lemma~\ref{finally} can be applied to representations of local integrally closed subrings $A$ of the field $F$.   The main focus is on situations in which 
%  a given representation  $Z$ of $A$  can be replaced with another one consisting of limit points of $Z$.  
  %This strategy is used in the next section to describe Pr\"ufer intersections of valuation rings in $\Zar(F)$.  
    
\begin{notation} {For $A$  a subring of $F$ and $J$  an ideal of $A$, we
 let $$ \End(J) = \{t \in F:tJ \subseteq J\}.$$ If $J \ne 0$, then $A \subseteq \End(J) \subseteq Q(A)$, where $Q(A)$ is the quotient field of $A$. }
 \end{notation}
%  If $J \ne 0$, then each such $t$ is in the quotient field of $A$.  

%   \begin{theorem} Let $A$ be a local integrally closed domain that is not a valuation domain,  and suppose $Z$ is an infinite  subset of $\Zar(F)$ such that $A = \bigcap_{V \in Z}V$ and $\bigcap_{U \in \lim(Z)}{\ff M}_U \ne 0$ (which is the case if $\bigcap_{V \in Z}{\ff M}_V \ne 0$). 
%   \begin{enumerate}
   
 %  \item[{\em (1)}] If $R$ is a completely integrally closed domain with $A \subseteq R \subseteq F$, then 
  % $$A \: \subseteq \bigcap_{U \in \lim(Z)}U \: \subseteq \: R. $$  
   
  % \item[{\em (2)}]  If $A$ is completely integrally closed, then $\lim(Z)$ contains 
 %a  perfect  representation  of $A$ and hence is uncountable.
 
%\end{enumerate}
%\end{theorem}

Let $A$ be a domain with quotient field $Q(A)$, and let $t \in Q(A)$. Then $t$ is {\it almost integral} over $A$ if there is $0 \ne a \in A$ such that $at^n \in A$ for all $n>0$.  The domain $A$ is {\it completely integrally closed} if the only elements of $Q(A)$ that are almost integral over $A$ are those that are in $A$; equivalently, $\End(J) = A$ for all nonzero ideals $J$ of $A$.

\begin{lemma}  \label{cic} Let $X$ be an infinite  subset of $\Zar(F)$ such that ${{A}}(X)$ is a local ring.
  If ${{J}}(\lim X) \ne 0$ and $R$ is a completely integrally closed domain with ${{A}}(X) \subseteq R \subseteq F$, then 
   ${{A}}(X) \subseteq {{A}}(\lim X)  \subseteq  R. $ 

\end{lemma}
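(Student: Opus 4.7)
The plan is to exploit Lemma~\ref{finally}, which places $J := {{J}}(\lim X)$ inside $A(X)$, together with the fact that $J$ is by construction an ideal of ${{A}}(\lim X)$. These two properties together force every element of ${{A}}(\lim X)$ to be almost integral over $R$, and complete integral closure of $R$ then delivers the containment.

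First, I would dispose of the easier inclusion ${{A}}(X) \subseteq {{A}}(\lim X)$ by invoking Proposition~\ref{union}: that proposition exhibits ${{A}}(\lim X)$ as the union $\bigcup_{Y \in {\cal C}} {{A}}(Y)$ taken over cofinite subsets $Y$ of $X$, and each such ${{A}}(Y)$ contains ${{A}}(X)$ because $Y \subseteq X$. No additional hypothesis is needed at this step.

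For the main inclusion ${{A}}(\lim X) \subseteq R$, fix $t \in {{A}}(\lim X)$. Since $J$ is an ideal of ${{A}}(\lim X)$ (being the intersection of the maximal ideals of the valuation rings in $\lim X$, each of which is stable under multiplication by ${{A}}(\lim X)$), we have $t^n J \subseteq J$ for every $n \geq 1$. Choose any nonzero $a \in J$, which is possible because $J \ne 0$ by hypothesis. Lemma~\ref{finally} yields $J \subseteq {{A}}(X) \subseteq R$, so that $a \in R$ and $a t^n \in J \subseteq R$ for every $n \geq 1$. In particular $t = (at)/a$ lies in the quotient field $Q(R)$, and the relations $a t^n \in R$ witness that $t$ is almost integral over $R$ in $Q(R)$. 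Complete integral closure of $R$ forces $t \in R$, completing the proof.

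The only substantial ingredient is Lemma~\ref{finally}, which upgrades the a~priori weaker statement ``$J$ is an ideal of ${{A}}(\lim X)$'' to the much stronger statement ``$J$ is contained in ${{A}}(X)$.'' Once that is in hand, the rest is a formal application of the $\End(J) = R$ characterization of complete integral closure, with one minor subtlety: $R$ need not have quotient field $F$, so one must verify that the candidate almost-integral element $t$ actually lies in $Q(R)$, which is precisely what the identity $t = (at)/a$ supplies.
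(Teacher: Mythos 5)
Your proposal is correct and follows essentially the same route as the paper: both arguments rest on Lemma~\ref{finally} to place $J = {{J}}(\lim X)$ inside ${{A}}(X) \subseteq R$, and then use the fact that $J$ is a nonzero ideal of ${{A}}(\lim X)$ together with complete integral closure of $R$ to conclude ${{A}}(\lim X) \subseteq R$. The only cosmetic difference is that the paper phrases the last step via $\End(JR) = R$ while you verify almost integrality directly with the witness $a \in J$; these are the two equivalent formulations the paper itself records.
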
 

\begin{proof} %Since $A$ is not a valuation ring of $F$, $Z$ is infinite {\bf [ref]}. 
By Lemma~\ref{finally}, $J:={{J}}(\lim X)$ is a subset of $A$, hence of $R$, and so $JR$ is an ideal of $R$. By assumption, $J \ne 0$, so, since $R$ is completely integrally closed, $\End(JR) = R$. Thus $\End(J) \subseteq R$.  Since $J$ is an ideal of ${{A}}(\lim X)$, it follows that  ${{A}}(\lim X) \subseteq \End(J) \subseteq R$, which proves the lemma. 
\end{proof}

Thus, if the ring  ${{A}}(X)$ in Lemma~\ref{cic} is itself completely integrally closed, then ${{A}}(X) = {{A}}(\lim X)$. 
In the next theorem we carry this idea further to  show that a representation of a completely  integrally closed local subring $A$ of $F$ can be refined to a patch closed representation that is perfect as a topological space.   In the proof, we use that if $X$ is a patch closed subset of $\Zar(F)$ such that $A = {{A}}(X)$, then there is a {\it  minimal  patch closed representation} $Y$ of $A$ contained in $X$, in the sense that  $A = {{A}}(Y)$ and  $Y$ properly contains no  patch closed representation of $A$; see \cite[(4.2)]{OGraz}.  A minimal patch closed representation of $A$ need not be unique \cite[Example 4.3]{OGraz}. 
Recall that a topological space is {\it perfect} if every point in the space is a limit point.
% and that an integral domain $R$ is a Pr\"ufer domain if each localization at a maximal ideal is a valuation domain. 

%\begin{definition} {Let $A$ be an integrally closed subring of $F$. A subset $X$ of $\Zar(F)$ is a {\it perfect representation} of $A$ if $A = \bigcap_{V \in X}V$ and $\lim(X) = X$; i.e., $X$ is closed and  perfect  with respect to the patch topology. 
%}
%\end{definition} 

\begin{theorem} \label{cic 2}
Let $X$ be a nonempty subset of $ \Zar(F)$ such that ${{J}}(X) \ne 0$.  If ${{A}}(X)$ is a completely integrally closed local ring that is not a valuation ring of $F$, then $\lim(X)$ contains  a  representation of ${{A}}(X)$ that  is perfect and closed in $\Zar(F)$ with respect to the patch topology.  
 \end{theorem}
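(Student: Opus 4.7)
The plan is to first show that $\lim(X)$ itself is a patch closed representation of $A := A(X)$, then extract a minimal such representation inside $\lim(X)$, and finally show that minimality rules out isolated points.

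Since $A$ is local but not a valuation ring of $F$, the set $X$ must be infinite by \cite[(11.11), p.~38]{N} (otherwise a finite intersection of valuation rings that is local would be a valuation ring). The hypothesis $J(X) \ne 0$ combined with Corollary \ref{dominate} gives $J(\lim X) \supseteq J(X) \ne 0$. Applying Lemma \ref{cic} to $X$ with $R = A$ yields $A(\lim X) = A$, so $\lim(X)$ is a patch closed representation of $A$. By the minimal patch closed representation construction of \cite[(4.2)]{OGraz}, recalled in the discussion preceding the theorem, there exists a patch closed $Y \subseteq \lim(X)$ with $A(Y) = A$ such that no proper patch closed subset of $Y$ intersects to $A$.

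I claim $Y$ is perfect. Suppose, for contradiction, that some $V_0 \in Y$ is isolated in $Y$. Since the patch topology is Hausdorff, being isolated is equivalent to $V_0 \notin \lim(Y)$, and because $Y$ is patch closed we get $\lim(Y) \subseteq Y \setminus \{V_0\}$. From $Y \subseteq \lim(X)$ we have $J(Y) \supseteq J(\lim X) \ne 0$, and Corollary \ref{dominate} then gives $J(\lim Y) \supseteq J(Y) \ne 0$. Hence Lemma \ref{cic}, applied to $Y$ with $R = A$, yields $A(\lim Y) = A$, so
$$A \; \subseteq \; A(Y \setminus \{V_0\}) \; \subseteq \; A(\lim Y) \; = \; A.$$
Since $V_0$ is isolated in $Y$, the set $Y \setminus \{V_0\}$ is patch closed, and so it is a patch closed representation of $A$ properly contained in $Y$, contradicting minimality. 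Therefore $Y$ is perfect.

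The step I expect to require the most care is the isolated-point argument: one must translate the topological condition ``$V_0$ is isolated in $Y$'' into the inclusion $\lim(Y) \subseteq Y \setminus \{V_0\}$ (using patch Hausdorffness together with $Y$ being patch closed), so that Lemma \ref{cic}, which transfers the completely integrally closed hypothesis from $A$ into control of $A(\lim Y)$, can be applied on $Y$ rather than on the original $X$. Everything else is bookkeeping built around that single move.
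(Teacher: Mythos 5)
Your proof is correct and takes essentially the same route as the paper: both pass to a minimal patch closed representation $Y$ (via \cite[(4.2)]{OGraz}) and use Lemma~\ref{cic} to show $\lim(Y)$ still represents $A$, the only cosmetic difference being that the paper concludes $Y=\lim(Y)$ directly from minimality, while you reach the same conclusion indirectly by deleting a hypothetical isolated point. The one detail to insert is that $Y$ is infinite --- by the same citation \cite[(11.11), p.~38]{N} you invoked for $X$, since $A={{A}}(Y)$ is local but not a valuation ring --- as this hypothesis is required before Corollary~\ref{dominate} and Lemma~\ref{cic} can be applied to $Y$.
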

 
 \begin{proof}  As discussed before the theorem,   the set of  patch closed representations of $A$ contains minimal elements with respect to set inclusion. Let $Y$ be a 
  minimal patch closed representation of $A$ contained in $\patch(X)$. 
  Since $\patch(X) = X \cup \lim(X)$,  Proposition~\ref{union} implies that  $$0 \: \ne \: {{J}}(X) \: =  {{J}}(\patch \: X)\: \subseteq \: {{J}}(Y).$$
  Since $A$ is not a valuation ring of $F$, $Y$ is infinite
  \cite[(11.11), p.~38]{N}.   Thus, since $A$ is completely integrally closed and $A = {{A}}(Y)$ with ${{J}}(Y) \ne 0$, 
  Lemma~\ref{cic} implies that $\lim(Y)$ is a patch closed representation of $A$. Since $Y$ is a minimal patch closed representation of $A$ and $\lim(Y) $ is a patch closed representation of $A$ contained in $Y$, we conclude that $Y = \lim(Y)$. Therefore, $Y$ is a perfect representation of $A$.
  % A quasicompact Hausdorff space for which every point is a limit point is necessarily uncountable. (This can be deduced from the Baire Category Theorem, or see \cite[Theorem 1]{Moo}.)  Since $\lim(X)$ contains $\lim(Y) = Y$, we conclude that $\lim(X)$ 
 % is uncountable and contains the perfect representation $Y$ of $A$.  
\end{proof}

Example~\ref{4.8} shows that the assumption in Theorem~\ref{cic 2} that ${{J}}(X)\ne 0$ is necessary. 

Restricting to the case where $F$ is a countable field, we obtain that the perfect representation given by the theorem is homeomorphic to the Cantor set:

\begin{corollary} \label{Cantor} Suppose the field $F$ is countable and $A$ is a completely integrally closed local subring of $F$  that  is not a valuation ring of $F$. If $X$ is a representation of $A$ such that ${{J}}(X) \ne 0$, then $\lim(X)$ contains a patch closed representation of $A$ that is homeomorphic in the patch topology to the Cantor set. 
\end{corollary}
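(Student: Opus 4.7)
The plan is to apply Theorem~\ref{cic 2} to produce a perfect patch closed representation $Y \subseteq \lim(X)$ of $A$, and then invoke Brouwer's topological characterization of the Cantor set: a nonempty topological space is homeomorphic to the Cantor set if and only if it is compact, Hausdorff, metrizable, totally disconnected, and perfect. So the task reduces to verifying that the $Y$ produced by the theorem meets each of these five conditions, using countability of $F$ to secure metrizability.

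First I would apply Theorem~\ref{cic 2} directly. The hypotheses match (${{J}}(X) \ne 0$, and ${{A}}(X) = A$ is completely integrally closed, local, and not a valuation ring), so we obtain a subset $Y$ of $\lim(X)$ that is a patch closed representation of $A$ and is perfect in the patch topology. Thus $Y$ is nonempty and perfect. Since $\Zar(F)$ is quasicompact and Hausdorff in the patch topology, the patch closed subspace $Y$ is compact Hausdorff. The patch topology on $\Zar(F)$ has a basis of clopen sets (basic sets of the form \({\cal U}(x_1,\ldots,x_n) \cap {\cal V}(y_1) \cap \cdots \cap {\cal V}(y_m)\) are clopen), so $\Zar(F)$, and hence $Y$, is zero-dimensional, which in a Hausdorff space implies total disconnectedness.

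The key remaining point is metrizability, and this is where countability of $F$ enters. Because $F$ is countable, the collection of basic patch open sets \({\cal U}(x_1,\ldots,x_n) \cap {\cal V}(y_1)\cap\cdots\cap {\cal V}(y_m)\) with $x_i,y_j \in F$ is countable, so the patch topology on $\Zar(F)$ is second countable. The subspace $Y$ inherits a countable basis. A second countable compact Hausdorff space is metrizable (Urysohn's metrization theorem), so $Y$ is metrizable. Putting the five properties together and applying Brouwer's characterization yields a homeomorphism of $Y$ with the Cantor set, completing the proof.

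The genuine content was already carried by Theorem~\ref{cic 2}; the present corollary is essentially a checklist against Brouwer's theorem, and I anticipate no real obstacle. The only subtle point worth double-checking is that countability of $F$ is sufficient for second countability of the patch topology, which follows immediately from the explicit description of the basic patch open sets recalled in Section~2.
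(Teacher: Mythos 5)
Your proposal is correct and follows essentially the same route as the paper: apply Theorem~\ref{cic 2} to get a perfect patch closed representation $Y \subseteq \lim(X)$, use countability of $F$ to get a countable basis and hence metrizability of the compact Hausdorff space $Y$, note zero-dimensionality gives total disconnectedness, and conclude via Brouwer's characterization of the Cantor set (the paper cites the same facts from Willard). No gaps.
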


\begin{proof}  By Theorem~\ref{cic 2}, 
$\lim(X)$ contains a patch closed representation $Y$ of $A$ that is perfect in the patch topology. Since $F$ is countable, the patch topology on $Y$ has a countable basis. As a patch closed subset of $\Zar(F)$,   $Y$ is quasicompact, so the fact that $Y$ has a countable basis implies the patch topology on  $Y$ is metrizable \cite[Theorem 23.1, p.~166]{W}. The patch topology on a spectral space has a basis of closed and open sets, so we conclude that $Y$ is a totally disconnected perfect metrizable space. Any such space is homeomorphic to the Cantor set 
\cite[Corollary 30.4, p.~217]{W}. 
\end{proof}

The next corollary shows that in the circumstances of Theorem~\ref{cic 2} any set of countably many valuation rings can be discarded from the representation $\lim(X)$ of $A$.  Thus $\lim(X)$ is uncountable in the setting of Theorem~\ref{cic 2}. 

\begin{corollary} \label{Baire} Let $\emptyset \ne X \subseteq \Zar(F)$ such that ${{J}}(X)\ne 0$.  If ${{A}}(X)$ is a completely integrally closed localring and not a  valuation ring of $F$, then ${{A}}(X) = {{A}}(Y)$ for every co-countable subset $Y$ of $\lim(X)$.

\end{corollary}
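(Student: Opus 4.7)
The plan is to combine Theorem~\ref{cic 2} with the Baire Category Theorem. First, Theorem~\ref{cic 2} furnishes a patch closed subset $Z \subseteq \lim(X)$ that is a representation of $A := {{A}}(X)$ and that is perfect in the patch topology. Let $C = \lim(X) \setminus Y$, which is countable by hypothesis, and set $C' = Z \cap C$, so that $Z \cap Y = Z \setminus C'$. The aim is to show $A(Z \cap Y) = A$, which will sandwich $A(Y)$ between two copies of $A$ via $Z \cap Y \subseteq Y \subseteq \lim(X)$.

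The heart of the argument is the claim that $Z \cap Y$ is patch dense in $Z$. Since the patch topology on $\Zar(F)$ is compact Hausdorff and $Z$ is patch closed in $\Zar(F)$, the space $Z$ is itself compact Hausdorff, hence a Baire space. For each $v \in Z$, $\{v\}$ is closed in $Z$ (Hausdorff) and, because $Z$ is perfect, not open in $Z$; so $Z \setminus \{v\}$ is open and dense in $Z$. Writing
$$Z \cap Y \: = \: \bigcap_{v \in C'} (Z \setminus \{v\})$$
exhibits $Z \cap Y$ as a countable intersection of open dense subsets of $Z$, and the Baire Category Theorem yields that $Z \cap Y$ is dense in $Z$; equivalently, $\patch(Z \cap Y) = Z$.

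Now convert density into equality of intersection rings. Since $Z$ is a nonempty perfect compact Hausdorff space, $Z$ is uncountable (again by Baire), so $Z \cap Y$ is infinite. Applying Proposition~\ref{union} with $Z \cap Y$ as a cofinite subset of itself gives $A(Z \cap Y) \subseteq A(\lim(Z \cap Y))$, from which $A(\patch(Z \cap Y)) = A(Z \cap Y)$. Therefore
$$A \: = \: A(Z) \: = \: A(\patch(Z \cap Y)) \: = \: A(Z \cap Y) \: \supseteq \: A(Y),$$
the last inclusion coming from $Z \cap Y \subseteq Y$. On the other hand, every valuation ring in $\lim(X)$ contains $A$ (apply Proposition~\ref{union} with $X$ itself as the cofinite subset), and $Y \subseteq \lim(X)$, whence $A(Y) \supseteq A$. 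Combining yields $A(Y) = A$.

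The main obstacle lies in spotting the right reduction: Theorem~\ref{cic 2} collapses the corollary to the standard topological assertion that no countable subset of a nonempty perfect compact Hausdorff space can destroy density. After this observation, the presheaf properties of ${{A}}(-)$ from Section~2 convert topological density directly back into the required algebraic equality, so there is little additional work.
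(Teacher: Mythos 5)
Your proposal is correct and follows essentially the same route as the paper's proof: invoke Theorem~\ref{cic 2} to obtain a perfect patch closed representation inside $\lim(X)$, apply the Baire Category Theorem to see that deleting countably many points leaves a patch dense subset, and then use Proposition~\ref{union} to pass from patch density back to equality of the intersection rings. The only cosmetic difference is that the paper enumerates the deleted points and intersects the finite-stage complements, while you intersect the singleton complements directly.
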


%ctbly many limit points can be omitted

\begin{proof} Since $A:={{A}}(X)$ is not a valuation ring of $F$ and $A$ is the intersection of the valuation rings in $X$,   $X$ is infinite \cite[(11.11), p.~38]{N}. Let $\{V_i:i \in {\mathbb{N}}\}$ be a countable subset of $\lim(X)$. It suffices to show  that the set $\lim(X) \setminus \{V_i:i \in {\mathbb{N}}\}$ is a representation of $A$. 
 Theorem~\ref{cic 2} implies there is a representation $Y$ of $A$ such that  $Y \subseteq \lim(X)$ and $Y$ is perfect and closed in the patch topology.   For each $i$, define $Y_i = Y \setminus \{V_1,\ldots,V_i\}$.  Since $Y$ is Hausdorff in the patch topology, $Y_i$ is patch open in $Y$ for each $i$. Moreover, $Y_i$ is patch dense in $Y$ since the fact that $Y$ is perfect implies that none of the valuation rings $V_1,\ldots,V_i$ are patch isolated points in $Y$.  Therefore, the Baire Category Theorem \cite[Corollary~3.9.4, p.~198]{Eng} implies that as a countable intersection of dense open subsets of the quasicompact Hausdorff space $Y$,  the set $Y_{\infty}:=Y \setminus \{V_i:i \in {\mathbb{N}}\} = \bigcap_{i >0} Y_i$ is patch dense in $Y$. 
%Now $X \setminus \{V_i:i \in {\mathbb{N}}\} = X \cap Y$.
  Thus $Y = \patch(Y_\infty)$, and so Proposition~\ref{union} implies $A = {{A}}(Y) = {{A}}(Y_\infty)$. Since $Y_\infty \subseteq \lim(X) \setminus \{V_i:i\in {\mathbb{N}}\}$, it follows that  $\lim(X) \setminus \{V_i:i \in {\mathbb{N}}\}$ is a representation of $A$.  
%Since $Z \setminus  \{V_i:i \in {\mathbb{N}}\}$ is patch dense in $\patch(Z) \setminus  \{V_i:i \in {\mathbb{N}}\}$ {\bf [check]},\footnote{Let $U$ be an open subset of $\Zar(F)$. Then $U \cap \patch(Z) \cap Y^c$ nonempty implies $U \cap Z$  }   we conclude from Lemma~\ref{list} that $Z \setminus  \{V_i:i \in {\mathbb{N}}\}$ is a representation of $A$.
\end{proof}

%\begin{remark} {\em A quasicompact Hausdorff space for which every point is a limit point is necessarily uncountable. (This can be deduced from the Baire Category Theorem, or see \cite[Theorem 1]{Moo}.)   Thus, if $A$ is a completely integrally closed domain that is not a valuation domain and $Z$ is a representation of $A$ such that $\bigcap_{V \in Z}{\ff M}_V \ne 0$, then Theorem~\ref{cic 2} implies  $\lim(Z)$ is uncountable. }
%\end{remark}

\begin{corollary} \label{late cor}  
Let $\emptyset \ne X \subseteq \Zar(F)$ be a set of rank one valuation rings such that ${{A}}(X)$ is a local ring with quotient field $F$ but ${{A}}(X)$ is not a valuation domain. If all but  at most  finitely many valuation rings in $X$ dominate $A$, then  $\lim(X)$  contains a  representation of ${{A}}(X)$ that is perfect and closed in $\Zar(F)$ in the patch topology.  
\end{corollary}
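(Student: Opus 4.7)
The plan is to reduce directly to Theorem~\ref{cic 2}. Three of its four hypotheses on ${{A}}(X)$ are already given in the statement of the corollary: $A := {{A}}(X)$ is a nonempty set's intersection, is local, and is not a valuation ring of $F$. So it remains to verify (a) $A$ is completely integrally closed and (b) ${{J}}(X) \ne 0$.

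For (a), each $V \in X$ is a rank one valuation ring, hence completely integrally closed with quotient field $F$. Since $A$ has quotient field $F$ as well, the intersection $A = \bigcap_{V \in X} V$ is completely integrally closed as an intersection inside a common quotient field of completely integrally closed domains.

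For (b), let $S = \{V \in X : V \text{ does not dominate } A\}$, which is finite by hypothesis, and let ${\ff m}$ denote the maximal ideal of $A$. Since $A$ is local and not equal to $F$ (which is itself a valuation ring), ${\ff m} \ne 0$. For each $V \in S$, let $P_V = {\ff M}_V \cap A$. I would first show $P_V \ne 0$: otherwise every nonzero element of $A$ is a unit in $V$, so $Q(A) = F \subseteq V$, forcing $V = F$; but then $V$ would have rank $0$, contradicting the hypothesis that $V$ has rank one. Next, picking $0 \ne a_V \in P_V$ for each $V \in S$ together with a nonzero $b \in {\ff m}$, the element $a = b \prod_{V \in S} a_V$ is nonzero since $A$ is a domain. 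By construction, $a \in {\ff m}$, so $a \in {\ff M}_W$ for each $W \in X \setminus S$ (which dominates $A$), and $a \in P_V \subseteq {\ff M}_V$ for each $V \in S$. Hence $0 \ne a \in {{J}}(X)$.

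With both hypotheses in place, Theorem~\ref{cic 2} applies and yields the desired perfect patch-closed representation of $A$ inside $\lim(X)$. The main obstacle is the ${{J}}(X) \ne 0$ verification, and the only substantive use of the rank one assumption on each $V \in X$ occurs there, to guarantee that the centers $P_V$ for non-dominating valuation rings are nonzero (a step that would fail without some control on the rank).
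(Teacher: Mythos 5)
Your proposal is correct and follows essentially the same route as the paper: both reduce to Theorem~\ref{cic 2} by noting that a set of rank one valuation rings yields a completely integrally closed intersection and that the domination hypothesis forces ${{J}}(X) \ne 0$. Your explicit verification of ${{J}}(X) \ne 0$ (via the nonzero centers $P_V$ of the finitely many non-dominating valuation rings) simply fills in a step the paper leaves to the reader.
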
 

\begin{proof} Since $X$ consists of rank one valuation rings, $A={{A}}(X)$ is a completely integrally closed domain. Also, since all but at most finitely many valuation rings in $X$ dominate $A$ and $A$ has quotient field $F$, it follows that  ${{J}}(X) \ne 0$. (Note that $A \ne F$, since $A$ is not a valuation ring.)  Theorem~\ref{cic 2} implies $\lim(X)$ contains a  representation of $A$ that is perfect and closed in the patch topology.  
\end{proof}

\begin{example} \label{4.8} {If the assumption in Corollary~\ref{late cor} that all but finitely many valuation rings in $X$ dominate $A$ is removed, the resulting statement is false. Let $A$ be a local Krull domain with Krull dimension $> 1$ and maximal ideal $M$. Let \begin{center}$X= \{A_P:P$ a height 1 prime ideal of $A\}$.\end{center} Then $X$ is a representation of $A$ consisting of DVRs. However, by Corollary~\ref{FC char}, $\lim(X)$ consists of a single valuation ring, namely, the quotient field of $A$. Moreover,   $A$ is completely integrally closed but  $\lim(X)$ is not a representation of $A$. This example also shows that the assumption that ${{J}}(X) \ne 0$ in Theorem~\ref{cic 2} is also necessary. 
 }
\end{example}

\begin{example} \label{vacant example}
 {This example shows that the restriction to rank one valuation rings in Corollary~\ref{late cor} is needed.  
 %We give first a general construction in (a) of which the desired example in (b) is a special case. 
% 
%  (a) Let $D$ be an integrally closed local domain with maximal ideal ${\ff m}$ and quotient field $F$ such  that $D$ is not a valuation domain and the residue field of $D$ is algebraically closed.  Since $D$ is not a valuation ring of $F$,  there exists $t \in F$ such that $t \not \in D$ and $t^{-1} \not \in D$.  Let $U$ be a valuation ring of $F$ that dominates $D[t]_{{\ff m}D[t]}$.  Seidenberg's Lemma \cite[Exercise~31, pp.~43--44]{Kap}  implies that $U/{\ff M}_U$ is a purely transcendental field extension of $D/{\ff m}$ of transcendence degree one.  Thus $D/{\ff m}$ is the intersection of the valuation rings in $U/{\ff M}_U$ that contain $(D+{\ff M}_U)/{\ff M}_U \cong D/{\ff m}$. It follows that $A:=D+{\ff M}_U$ is an integrally closed local domain with maximal ideal $M = {\ff M}_U$ such that $A$ has a representation consisting of   the infinitely many valuation rings of $F$ that dominate $D$ and are contained in $U$. Moreover, $A$ is not a valuation domain since more than one valuation overring of $A$ dominates $A$. In addition, $\End(M) = U$.  
 Let $D$ be a two-dimensional regular local ring with maximal ideal ${\ff m} = (x,y)D$, and let $t = x/y$. Then $U = D[t]_{{\ff m}D[t]}$ is a DVR and $U/{\ff M}_U \cong (D/{\ff m})(T)$, where $T$ is an indeterminate. Let $X$ be the collection of (rank two) valuation overrings of $D$ that dominate $D$ and are properly contained in $U$. Then
  $A:=D+{\ff M}_U$ is an integrally closed local domain with maximal ideal $M = {\ff M}_U$. Moreover, $A  = {{A}}(X)$ and 
   $\lim(X) =\{U\}$, and so $\lim(X)$ does not contain a perfect representation of $A$. In fact, the only valuation overrings of $A$ are those in $X \cup \{F\}$ (this follows for example from \cite[Theorem 4.1]{Fab}). Thus the only 
patch closed representation of $A$ is  $X \cup \{F\}$, and so $A$ does not have a perfect representation.  
 Where this example differs from those meeting the hypotheses of Corollary~\ref{late cor} is that $X$ does not consist of rank one valuation rings.   
}
\end{example}

%\begin{corollary} \label{cic 2}
% Let $A$ be a local completely integrally closed domain that is not a valuation ring of $F$, and let $Z$ be a subset of $\Zar(F)$ such that $A = \bigcap_{V \in Z}V$.  If $\bigcap_{U \in \lim(Z)}{\ff M}_U \ne 0$ (which, by Proposition~\ref{union}, is the case if $\bigcap_{V \in Z}{\ff M}_V \ne 0$), then $\lim(Z)$ contains 
% a  perfect  representation  of $A$ and hence is uncountable. \end{corollary}

Another variation on these ideas is given in Theorem~\ref{pd}, where the emphasis is on the residue fields of the representing valuation rings. 
%For this we require a different version of Lemma~\ref{cic}.   

%\begin{lemma}  \label{E(M) 1} Let $A$ be a local integrally closed domain, let $M$ be the maximal ideal of $A$ and suppose there is an infinite subset $Z$ of $\Zar(F)$  such that $A = \bigcap_{V \in Z}V$ and 
%  all but finitely many valuation rings in $Z$ dominate $A$.     
%Then   $A  \subseteq \bigcap_{U \in \lim(Z)}U  \subseteq  \End(M). $  

%\end{lemma} 

%\begin{proof} 
%By Corollary~\ref{dominate} every valuation ring in $\lim(Z)$ dominates $A$. Therefore, by Lemma~\ref{finally}, $M = \bigcap_{U \in \lim(Z)}{\ff M}_U$. Consequently, $M$ is an ideal of $\bigcap_{U \in \lim(Z)}U$, so that $A \subseteq \bigcap_{U \in \lim(Z)}U \subseteq \End(M)$.  
%\end{proof}

%Thus, if $A$ is as in Lemma~\ref{E(M) 1} and $\End(M) = A$, we have $A = \bigcap_{U \in \lim(Z)}U$.  
%The next theorem shows that $Z$ can even be chosen such that its valuation rings are residually transcendental over $A$.  

\begin{theorem} \label{pd}
Let $A$ be a local integrally closed ring with maximal ideal $M$ and quotient field $F$, and let $X \subseteq \Zar(D)$ such that $A = A(X)$.
% Let $\emptyset \ne X \subseteq \Zar(F)$ such that ${{A}}(X)$ is a local domain with maximal ideal $M$ and quotient field $F$. 
Suppose  $\End(M) = A$,  all but finitely many valuation rings in $X$ dominate $A$ and $A$ is not a valuation domain. 
 Then there is a  subset  $Z$ of $\lim(X)$ such that  $A = {{A}}(Z)$, $Z$ is perfect in the patch topology, and every valuation ring in $Z$ dominates $A$ and has residue field transcendental over the residue field of $A$.  
%
% and let $Y$ be the set of valuation rings in $\lim(Z)$ whose residue field is transcendental over the residue field of $A$. Then $A = \bigcap_{V \in Y}V$.
%
% then there exists a  representation $Z$ of $A$ such that  $\patch(Z)$ is perfect and every valuation ring in $Z$ dominates $A$ and has residue field transcendental over the residue field of $A$.\footnote{dominating set is patch closed. c.i.c. implies unique?}
\end{theorem}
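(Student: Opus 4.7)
I propose a proof that closely parallels Theorem~\ref{cic 2}, with the weaker hypothesis $\End(M) = A$ substituting for complete integral closure, plus an extra step to secure transcendence of residue fields. First I would establish $A(\lim X) = A$. Replacing $X$ by $X \setminus \{F\}$ (which preserves $A(X)$), assume every $V \in X$ is proper. For any nonzero $m \in M$, $m \in {\ff M}_V$ for every $V$ dominating $A$, hence for all but finitely many $V \in X$; so $m \in J(\lim X)$ by Corollary~\ref{dominate}. Thus $M \subseteq J(\lim X)$ and every $V \in \lim X$ dominates $A$. Combined with Lemma~\ref{finally} and $A \cap {\ff M}_V = M$, this forces $J(\lim X) = M$. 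Then $M$ is a nonzero ideal of $A(\lim X)$ with $wM \subseteq J(\lim X) = M$ for every $w \in A(\lim X)$, so $A(\lim X) \subseteq \End(M) = A$, giving equality. Applying \cite[(4.2)]{OGraz} I would then extract a patch-closed-minimal representation $Y$ of $A$ inside $\lim X$; the same argument applied to $Y$ yields $A(\lim Y) = A$, so by minimality $Y = \lim Y$, that is $Y$ is perfect, with every $V \in Y$ dominating $A$.

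The main obstacle is the transcendence refinement. I would run a Zorn-type minimality argument on the family ${\cal T}$ of patch closed subsets $Z \subseteq Y$ with $A(Z) = A$ and every $V \in Z$ having transcendental residue field over $A/M$. Two nontrivial inputs are needed: (i) ${\cal T} \ne \emptyset$, and (ii) formation of patch limits within ${\cal T}$ preserves the transcendence condition. For (i), let $Y_{tr} \subseteq Y$ be the subset with transcendental residue field, and show $A(\patch(Y_{tr})) = A$: for $s \notin A$ with $1/s \in A$, any $V$ dominating $A$ excludes $s$; for $s, 1/s \notin A$, the patch clopen set $Y \cap {\cal V}(s)$ is infinite by perfection of $Y$, and Lemma~\ref{trans}(2) applied to it (with $D = A$) produces a patch limit point $U \in Y \cap {\cal V}(s) \cap Y_{tr}$ provided one can exhibit $t \in F$ with $t, 1/t \notin A(Y \cap {\cal V}(s))$; this exhibition uses the minimality of $Y$ together with $\End(M) = A$ to force $A(Y \cap {\cal V}(s))$ not to be a valuation ring of $F$.

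For (ii), I would use Lemma~\ref{uf limit}: if $V = V_{\cal F}$ for a nonprincipal ultrafilter ${\cal F}$ on a family of transcendental-residue-field valuation rings dominating $A$, a shrinking argument selects an ${\cal F}$-large subfamily in which a common element $t \in F$ witnesses transcendence, and one shows that $t + {\ff M}_V$ is transcendental over $A/M$ in $V/{\ff M}_V$. Granting (i) and (ii), a minimal $Z \in {\cal T}$ satisfies $\lim Z \in {\cal T}$ and $A(\lim Z) = A$ by the first-paragraph argument applied to $Z$, so minimality forces $Z = \lim Z$, giving a perfect $Z$ with the required properties. The ultrafilter transfer of transcendence in (ii) is the step I expect to be the most delicate, since it is where the patch topology and residue field arithmetic interact most subtly.
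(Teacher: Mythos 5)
Your first paragraph is essentially the paper's argument and is correct: Corollary~\ref{dominate} and Lemma~\ref{finally} give ${{J}}(\lim X) = M$, the hypothesis $\End(M) = A$ upgrades this to ${{A}}(\lim X) = A$, and a minimal patch closed representation $Y \subseteq \lim(X)$ of $A$ is then perfect by minimality. The gap is in your second paragraph, and it is fatal to the route you chose. Your whole scheme runs a minimality argument over the family ${\cal T}$ of \emph{patch closed} subsets of $Y$ all of whose members have residue field transcendental over $A/M$, and it rests on claim (ii): that forming patch limits (equivalently, passing to $V_{\cal F}$ for a nonprincipal ultrafilter) preserves transcendence of the residue field extension. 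This claim is false. Transcendence of $V/{\ff M}_V$ over $A/M$ is an existential condition (there exists $t \in V$ with transcendental image), and the witnesses $t_i$ for the $V_i$ need not stabilize on any ${\cal F}$-large set; no ``shrinking argument'' can manufacture a common witness. Concretely, the order valuation rings $V_i$ of a sequence of iterated local quadratic transforms of a regular local ring $R$ (as in Corollary~\ref{RLR}) each have residue field transcendental over $R/{\ff m}$, yet their unique patch limit, the boundary valuation ring, can have residue field algebraic over $R/{\ff m}$. The same failure undermines your step (i): $\patch(Y_{tr})$ contains the limit points of $Y_{tr}$, which may have algebraic residue fields, so showing ${{A}}(\patch(Y_{tr})) = A$ does not place $\patch(Y_{tr})$ in ${\cal T}$, and ${\cal T}$ may well be empty. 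Insisting that the representing set be patch closed \emph{and} consist entirely of transcendental-residue-field valuation rings is asking for something that generally does not exist.

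The paper avoids this entirely by not requiring $Z$ to be patch closed. It takes $Z$ to be exactly $Y_{tr}$, the set of $V \in Y$ with transcendental residue field, and shows ${{J}}(Z) = M$ directly: if $t \in {{J}}(Z)$ with $t \notin A$, then $t^{-1} \notin A$ (else $t^{-1}$ lies in every $U \in Z$, contradicting $t \in {\ff M}_U$), and Lemma~\ref{trans}(2) applied to $Y$ with $D = A$ produces $U \in \lim(Y) \subseteq Y$ in which $t$ is a unit and whose residue field is transcendental over $A/M$ --- so $U \in Z$ and $t \notin {\ff M}_U$, a contradiction. Then $\End(M) = A$ gives ${{A}}(Z) = A$, minimality of $Y$ forces $\patch(Z) = Y = \lim(Z)$, and perfection of $Z$ itself follows because a dense subset of a perfect Hausdorff space has no isolated points. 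Note that the theorem's conclusion only asks that $Z$ be perfect (no isolated points), not closed; recognizing that perfection passes to dense subsets is the observation that makes the transcendence refinement essentially free, whereas your formulation forces you to prove a closure property that is simply not true.
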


\begin{proof}
By Corollary~\ref{dominate}, each valuation ring in $\lim(X)$ dominates $A$. Therefore, by Lemma~\ref{finally}, $M = {{J}}(\lim X)$. Consequently, $M$ is an ideal of the ring ${{A}}(\lim X)$, and $$A = {{A}}(X) \subseteq {{A}}(\lim X) \subseteq \End(M) = A.$$  
Thus $\lim(X)$ is also a patch closed representation of $A$. 
 Let $Y$ be a minimal patch closed representation of $A$ contained in $\lim(X)$, and let $Z$ be the collection of valuation rings $V$ in $Y$ such that the residue field of $V$  is transcendental over the residue field of $A$.

 Since $A$ is not a valuation domain and $A$ is the intersection of the valuation rings in $Y$,   the set $Y$ is infinite \cite[(11.11), p.~38]{N}. By Lemma~\ref{trans}(2), $Z$ is nonempty. We claim first that ${{J}}(Z) \subseteq A$.  Suppose to the contrary that there exists $t \in {{J}}(Z) $ such that $t \not \in A$. If $t^{-1} \in A$, then $t^{-1} \in U$ for any $U \in Z$, a contradiction since $t \in {\ff M}_U$.  Thus $t \not \in A$ and $t^{-1} \not \in A$. 
 Since $Y$ is infinite, Lemma~\ref{trans}(2) implies there exists $U \in Z$ such that $t \not \in {\ff M}_U$, contrary to the choice of $t$. Therefore, ${{J}}(Z) \subseteq A$.

Next, since each $U \in Z$ dominates $A$, we have that $M = {{J}}(Z)$ is the maximal ideal of $A$. Since  $\End(M) =A$ and $M$ is an ideal of ${{A}}(Z)$, we have  $A = {{A}}(Z)$. 
 Consequently, $\patch(Z)$ is a representation of $A$ contained in the minimal patch closed representation $Y$ of $A$, which forces $Y  = \patch(Z)$.  
An argument similar to that at the beginning of the proof shows that $\lim(Z)$ is also a patch closed representation of $A$, so, again by the minimality of $Y$, we have $\lim(Z) = \patch(Z)$, which shows that $\patch(Z)$ is  perfect in the patch topology. 
That $Z$ is also perfect in the patch topology now follows from the fact that $Z$ is dense in $\patch(Z)$ with respect to the patch topology. 
\end{proof}

%\begin{corollary} Let  $A$ be a local  integrally closed domain with   maximal ideal $M$ and quotient field $F$ such that $\End(M) = A$ and $A$ is  not a valuation domain. Then there exists a subset $Z$ of $\Zar(F)$ such that $A = \bigcap_{V \in Z}V$, $\patch(Z)$ is perfect and every valuation ring in $Z$ dominates $A$ and has residue field transcendental over the residue field of $A$. 
%\end{corollary}

%\begin{proof}
%Let $X$ be the collection of valuation overrings of $A$ that dominate $A$. By Corollary~\ref{dominate}, $X$ is a patch closed representation of $A$. Let $Y$ be a minimal patch closed representation of $A$ contained in $X$, and let $Z$ be the collection of valuation rings $V$ in $Y$ such that the residue field of $V$  is transcendental over the residue field of $A$.  
%By Theorem~\ref{pd}, $Z$ is a representation of $A$. 
%Consequently, $\patch(Z)$ is a representation of $A$ contained in the minimal patch closed representation $Y$ of $A$, which forces $Y  = \patch(Z)$. 
%By Corollary~\ref{dominate} every valuation ring in $\lim(Z)$ dominates $A$. Therefore, by Lemma~\ref{finally}, $M = \bigcap_{U \in \lim(Z)}{\ff M}_U$. Consequently, $M$ is an ideal of $\bigcap_{U \in \lim(Z)}U$, so that $A \subseteq \bigcap_{U \in \lim(Z)}U \subseteq \End(M) = A$.  
%Thus $\lim(Z)$ is also a patch closed representation of $A$. Since $\patch(Z)$ is a minimal patch closed representation of $A$, we have $\lim(Z) = \patch(Z)$, which proves that $\patch(Z)$ is perfect in the patch topology. 
%\end{proof}

\begin{remark} {Example~\ref{vacant example} shows that the assumption that $\End(M) = A$ in Theorem~\ref{pd} is necessary, since in that example $A$ is an integrally closed domain that is not a valuation domain and $A$  does not have a perfect representation.} 
\end{remark}

\section{Rank one representations and Pr\"ufer intersections}

The proofs in this section and the next utilize  a topology that is dual to the Zariski topology on $\Zar(F)$.   
 The {\it inverse topology} on $\Zar(F)$ is the topology that has a  basis of closed sets the subsets of $\Zar(F)$ that are quasicompact and open in the Zariski topology; i.e., the closed sets are intersections of finite unions of sets of the form $\cal{U}(x_1,\ldots,x_n)$, $x_1,\ldots,x_n \in F$.  For more on the inverse topology in the context of the Zariski-Riemann space of a field, see \cite{FFL,OZR,OGraz}.

Recall from the introduction that an integral domain $A$ is a {\it Pr\"ufer domain} if each localization of $A$ at a maximal ideal of $A$ is a valuation domain. Viewing $\Zar(F)$ as a locally ringed space with structure sheaf ${{A}}$   given by ${{A}}({\cal U})$ for each nonempty Zariski open set ${\cal U}$ of $\Zar(F)$, it follows that for a subset $X$ of $\Zar(F)$, ${{A}}(X)$ is a Pr\"ufer domain with quotient field $F$ if and only if the closure of $X$ in the inverse topology of $\Zar(F)$  is an  affine scheme in $\Zar(F)$; i.e., ${{A}}(X)$ is  a Pr\"ufer domain if and only if  for $A = {{A}}(X)$  the closure of $X$ in the inverse topology of $\Zar(F)$ is  $\{A_P:P \in \Spec(A)\}$; see \cite{OZR}. Geometrical characterizations of subsets $X$ of $\Zar(F)$ such that  ${{A}}(X)$ is a Pr\"ufer domain with quotient field $F$ are given in \cite{ OGeo, OZR}. 

Our interest in this section is in topological conditions on $X$ that are sufficient to guarantee ${{A}}(X)$ is a Pr\"ufer domain.  In \cite{OCpt} it is shown that if both $X$ and $\lim(X)$ consist of rank one valuation rings of $F$, then ${{A}}(X)$ is a one-dimensional Pr\"ufer domain with nonzero Jacobson radical and quotient field $F$.  In this section, we also seek to control patch limit points, but not in terms of the rank of  valuation rings that occur as  limit points. Instead, we focus on the case in which $\lim(X)$ lies an affine subset of $\Zar(F)$; i.e., we require that ${{A}}(\lim X)$ is a Pr\"ufer domain with quotient field $F$.  In light of Proposition~\ref{union}, it is not surprising that this in itself is not enough to guarantee  ${{A}}(X)$ is a Pr\"ufer domain with quotient field $F$.   However, if we restrict $X$ to rank one valuation rings, then the assumption that ${{A}}(\lim X)$ is a Pr\"ufer domain with quotient field $F$ is sufficient for ${{A}}(X)$ also to be a Pr\"ufer domain under some straightforward assumptions on $X$. This is proved in Corollary~\ref{limit cor}. One of these restrictions is that ${{A}}(\lim X)$ is a {\it $G$-domain}; that is, a domain for which the intersection of nonzero prime ideals is nonzero. See \cite[Section 1.3]{Kap} for more on this class of rings. 

We use the following notation in relation to the inverse topology on $\Zar(F)$.

 \begin{notation} {
 For a subset $X$ of $\Zar(F)$, we denote by $\inv(X)$ the closure of $X$ in the inverse topology. Thus $\inv(X)$ is the intersection of the Zariski quasicompact open subsets of $\Zar(F)$ that contain $X$. The {\it generic closure} of $X$ in $\Zar(F)$ is denoted $\gen(X)$ and is defined to be the set of $W \in \Zar(F)$ such that $W \supseteq V$ for some $V \in X$. By \cite[Corollary, p.~45]{Hoc}, $\inv(X) = \gen(\patch \: X)$.   
 }
 \end{notation}

\begin{lemma} \label{tech lemma} Let $X_1$ be a set of rank one valuation rings in $\Zar(F)$, and let $X_2$ be a nonempty subset of $\Zar(F)$ such that ${{J}}(X_2) \ne 0$ and  $\lim(X_1) \subseteq X_2$. If $A:={{A}}(X_1) \cap {{A}}(X_2)$ is a local domain, then $A = {{A}}(X_2)$. 
%
% and ${{A}}(X_2)$ is a Pr\"ufer domain with quotient field $F$, then $A$ is a valuation ring of $F$.  

\end{lemma}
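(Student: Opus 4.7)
The plan is to show $A(X_2) \subseteq A(X_1)$, which, combined with $A = A(X_1) \cap A(X_2)$, yields $A = A(X_2)$.  When $X := X_1 \cup X_2$ is finite, $A$ is a local finite intersection of valuation rings of $F$, so by \cite[(11.11), p.~38]{N} $A$ is itself a valuation ring of $F$.  Then $A(X_2)$ is an overring of $A$ in the quotient field $F$, hence $A(X_2) = A_Q$ for some prime $Q$ of $A$, and a short case check using $J(X_2) \ne 0$ and the rank-one restriction on $X_1$ forces $Q$ to be the maximal ideal of $A$, giving $A(X_2) = A$.  I therefore focus on the case in which $X$ is infinite.

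The decisive observation is that $A(X_1)$ is completely integrally closed: each rank one valuation ring of $F$ is completely integrally closed, and this property is inherited by intersections of subrings of a common ambient field.  This makes $R := A(X_1)$ a valid candidate to feed into Lemma~\ref{cic} applied to $X = X_1 \cup X_2$.  To verify the hypothesis $J(\lim X) \ne 0$, I use that $\lim(X_1 \cup X_2) = \lim(X_1) \cup \lim(X_2)$, so $J(\lim X) = J(\lim X_1) \cap J(\lim X_2)$.  The hypothesis $\lim(X_1) \subseteq X_2$ gives $J(X_2) \subseteq J(\lim X_1)$, while Corollary~\ref{dominate} applied to $X_2$ gives $J(X_2) \subseteq J(\lim X_2)$; therefore $0 \ne J(X_2) \subseteq J(\lim X)$.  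Lemma~\ref{cic} then delivers $A(\lim X) \subseteq R = A(X_1)$.

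To conclude, I establish $A(X_2) \subseteq A(\lim X)$ and combine with the above.  The containment $\lim(X_1) \subseteq X_2$ immediately yields $A(X_2) \subseteq A(\lim X_1)$, and Proposition~\ref{union} (together with the identity $A(X_2) = A(\patch X_2)$) yields $A(X_2) \subseteq A(\lim X_2)$.  Consequently $A(X_2) \subseteq A(\lim X_1) \cap A(\lim X_2) = A(\lim X) \subseteq A(X_1)$, which gives $A = A(X_1) \cap A(X_2) = A(X_2)$.  The main obstacle I anticipate is identifying the correct completely integrally closed ring to use in Lemma~\ref{cic}; the rank-one hypothesis on $X_1$ is exactly what makes $A(X_1)$ itself play that role, and the asymmetric inclusion $\lim(X_1)\subseteq X_2$ must be used twice (once to bound $J(\lim X)$ from below, and once to push $A(X_2)$ into $A(\lim X_1)$) to thread the required chain of containments.
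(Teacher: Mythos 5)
Your proof is correct and follows essentially the same route as the paper's: both arguments dispose of the finite case via \cite[(11.11), p.~38]{N}, then apply Lemma~\ref{cic} with $R = {{A}}(X_1)$ (completely integrally closed because $X_1$ consists of rank one valuation rings), using $\lim(X_1)\subseteq X_2$ and ${{J}}(X_2)\ne 0$ to get ${{J}}(\lim X)\ne 0$, and conclude from ${{A}}(X_2)\subseteq {{A}}(\lim X)\subseteq {{A}}(X_1)$. The only cosmetic difference is that the paper packages the containment ${{A}}(X_2)\subseteq {{A}}(\lim X)$ through the single inclusion $\lim(X)\subseteq \patch(X_2)$ rather than splitting $\lim(X)$ into $\lim(X_1)\cup\lim(X_2)$.
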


\begin{proof} 
Let $X = X_1 \cup X_2$. 
If $X$ is finite, then, as an intersection of finitely many valuation rings of $F$,  $A$ is a valuation ring of $F$ \cite[(11.11), p.~38]{N}.
In this case 
%$A = {{A}}(X_1)$ or $A = {{A}}(X_2)$. Since
 since $A$ is a valuation ring of $F$, $X_1$ consists of at most one valuation ring, necessarily of rank one.  Also, as overrings of a valuation domain, the valuation rings in  $X_2$ form a chain, one  that does not include $F$ since ${{J}}(X_2) \ne 0$. Thus every valuation ring in $X_2$ has rank at least one and so is contained in the only valuation ring in $X_1$ if $X_1$ is nonempty. 
It follows that if $X$ is finite, then $A = {{A}}(X_2)$
 and the claim is proved. 

Suppose next that $X$ is infinite. Since $A$ is local, Lemma~\ref{finally} implies ${{J}}(\lim X) \subseteq A$.  We claim that ${{J}}(\lim X) \ne 0$. %Suppose to the contrary that ${{J}}(\lim X) = 0$.
 Since  $\lim(X_1) \subseteq X_2$, we have $$\lim(X) = \lim(X_1) \cup \lim(X_2) \subseteq   \lim(X_1) \cup \patch(X_2) = \patch(X_2).$$   Thus
 %\begin{center}
${{J}}(X_2) = {{J}}(\patch \: X_2) \subseteq   {{J}}(\lim X)$ by Proposition~\ref{union}.
%\end{center}
By assumption ${{J}}(X_2) \ne 0$, so 
%
%
% \: and \: ${{A}}(X_2) = {{A}}(\patch \: X_2) \subseteq   {{A}}(\lim X)$. 
%  Since ${{A}}(X_2)$ has quotient field $F$ and 
% ${{J}}(X_2) \subseteq {{J}}(\lim X) = 0$,  Corollary~\ref{F cor 2} implies that $F \in \patch(X_2)$.  Therefore,  
%Thus ${{J}}(X_2) ={{J}}(\patch X_2) = 0$, contrary to assumption. 
 ${{J}}(\lim X) \ne 0$.

 Since ${{J}}(\lim X) \ne 0$ and $X_1$ consists of rank one valuation rings,  Lemma~\ref{cic} implies $ {{A}}(\lim X) \subseteq  
%\End({{J}}(\lim X)) \subseteq 
{{A}}(X_1).$   
Since also  $\lim(X) \subseteq \patch(X_2)$,  we have by Proposition~\ref{union} that  ${{A}}(X_2) = {{A}}(\patch(X_2)) \subseteq {{A}}(\lim X) \subseteq {{A}}(X_1)$.  It follows that $A = {{A}}(X_1) \cap {{A}}(X_2) = {{A}}(X_2)$.
% By assumption, ${{A}}(X_2)$ is a Pr\"ufer domain with quotient field $F$. 
%As an overring of a Pr\"ufer domain, $A= {{A}}(X_2)$ is also a Pr\"ufer domain. 
%Since also $A$ is a local ring, $A$ is a valuation domain. 
\end{proof}

%We recall the notion of a Goldman domain, or $G$-domain for short; see \cite{Kap}.  

%\begin{definition} An integral domain $A$ is a $G$-domain if the intersection of nonzero prime ideals of $A$ is nonzero. 
%We say an integral domain $A$  is a {\it $G^*$-domain} if $\bigcap_{i \in I}P_i \ne 0$ for every set $\{P_i:i \in I\}$ of incomparable nonzero prime ideals of $A$. 
%\end{definition}

%Clearly a $G$-domain is a $G^*$-domain. Conversely, if $A$ is a $G^*$-domain and  every nonzero prime ideal of $A$ contains a minimal nonzero prime ideal, then $A$ is a $G$-domain. Thus if $A$ has finite Krull dimension, then $A$ is a $G$-domain if and only if $A$ is a $G^*$-domain. 

\begin{theorem}  \label{inv thm}
Let  $X_1$ be a nonempty set of rank one valuation rings in $\Zar(F)$, and  let $X_2 \subseteq \Zar(F)$ be such that  $\lim(X_1) \subseteq X_2$.
Then the following are equivalent. 
\begin{enumerate}[$(1)$]
\item ${{J}}(X_1) \ne 0$ and 
  ${{A}}(X_2)$ is a Pr\"ufer $G$-domain with quotient field $F$.
  
  \item  
 ${{A}}(X_1) \cap {{A}}(X_2)$ is a Pr\"ufer  $G$-domain with quotient field $F$. 
 \end{enumerate}
 \end{theorem}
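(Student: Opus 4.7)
For $(2) \Rightarrow (1)$ I would use standard overring theory of Pr\"ufer $G$-domains. Since $B := {{A}}(X_2) \supseteq A$ is an overring of the Pr\"ufer domain $A$, $B$ is Pr\"ufer with quotient field $F$. Letting $J_A = \bigcap\{P : P \text{ a nonzero prime of } A\}$, which is nonzero by the $G$-domain hypothesis, one checks that $J_A \cdot B$ is a nonzero ideal contained in every nonzero prime of $B$ (using that overrings of Pr\"ufer domains induce nonzero-prime-preserving contractions), hence $B$ is a $G$-domain. For ${{J}}(X_1) \ne 0$: each rank-one $V \in X_1$ is a valuation overring of Pr\"ufer $A$, so $V = A_P$ for $P = {\ff M}_V \cap A$ a nonzero (height-one) prime of $A$; any nonzero $t \in J_A \subseteq P$ then lies in ${\ff M}_V$ for every $V \in X_1$.

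For the main direction $(1) \Rightarrow (2)$, I first replace $X_2$ by $\patch(X_2)$; by Proposition~\ref{union} this preserves both $B = {{A}}(X_2)$ and the containment $\lim(X_1) \subseteq X_2$. The key structural observation is that for each $b \in B$, the set $\{V \in X_1 : b \notin V\}$ is finite: any patch limit point of an infinite such set would lie in $\lim(X_1) \subseteq X_2$ and hence contain $B$, contradicting the exclusion of $b$. Combined with a nonzero $t \in {{J}}(X_1)$ and a nonzero $s \in \bigcap\{Q : Q \text{ a nonzero prime of } B\}$ (which exists since $B$ is a $G$-domain), this finite-character principle lets one adjust by suitable products $t^n s^m$ to show that $A = {{A}}(X_1) \cap B$ has quotient field $F$ and to contract a generator of the intersection of nonzero primes of $B$ into $A$, yielding the $G$-domain property for $A$. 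To show $A$ is Pr\"ufer, I localize at each maximal ideal $M$ of $A$: setting $X_i^M = \{V \in X_i : {\ff M}_V \cap A \subseteq M\}$ for $i=1,2$, I verify $A_M = {{A}}(X_1^M) \cap {{A}}(X_2^M)$, with the hypothesis $\lim(X_1^M) \subseteq X_2^M$ following from $\lim(X_1^M) \subseteq \lim(X_1) \subseteq X_2$ together with Corollary~\ref{dominate} propagating the center-condition through patch limits. Lemma~\ref{tech lemma} then applies to the local ring $A_M$ and yields $A_M = {{A}}(X_2^M)$; the tree-like specialization structure of $\Spec(B)$ (a consequence of the Pr\"ufer hypothesis) identifies this intersection as $B_{Q_M}$ for a unique maximal $Q_M$ among the primes of $B$ with center in $A$ contained in $M$, so $A_M$ is a valuation ring.

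The main obstacle will be the last step above: showing that ${{A}}(X_2^M)$ collapses to a single localization $B_{Q_M}$. A priori this intersection of localizations of $B$ is merely an overring of $B$ rather than a valuation ring, so the argument must carefully use maximality of $M$ in $A$ together with Pr\"ufer theory for $B$ to pin down a unique maximal prime of $B$ whose center lies in $M$. A secondary subtlety is the identity $A_M = {{A}}(X_1^M) \cap {{A}}(X_2^M)$, which is not purely formal (localization does not generally commute with arbitrary intersections) and requires both the Pr\"ufer structure of $B$ and the rank-one structure of $X_1$.
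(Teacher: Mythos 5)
Your direction $(2)\Rightarrow(1)$ is fine and is essentially the paper's argument. For $(1)\Rightarrow(2)$ your skeleton also matches the paper's: localize at a maximal ideal $M$, reduce to Lemma~\ref{tech lemma} to conclude that $A_M$ equals the intersection of the ``$X_2$-part,'' and finish with Pr\"ufer overring theory. But the step you treat as routine is the actual crux, and the step you flag as the main obstacle is immediate. Concretely: you never verify the hypotheses of Lemma~\ref{tech lemma}, namely that $X_2^M$ is nonempty and that ${{J}}(X_2^M)\ne 0$. This is exactly where the $G$-domain hypothesis on ${{A}}(X_2)$ enters in this direction, and it forces you to treat two degenerate situations. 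First, $F$ may belong to $X_2$ (nothing in the hypotheses excludes it), in which case ${{J}}(X_2^M)=0$ no matter what; the paper works with $Y_2=(Y\cap X_2)\setminus\{F\}$ and then shows ${{J}}(X_2\setminus\{F\})\ne 0$ because each member of $X_2\setminus\{F\}$ is a localization of the Pr\"ufer $G$-domain ${{A}}(X_2)$ at a nonzero prime, hence its maximal ideal contains the nonzero intersection of the nonzero primes. Second, $X_2^M$ may be empty, in which case $\lim(X_1^M)=\emptyset$, so $X_1^M$ is finite by Proposition~\ref{union} and $A_M$ is a finite intersection of valuation rings; this case needs to be dispatched separately. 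Without these verifications the appeal to Lemma~\ref{tech lemma} is unjustified.

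By contrast, the ``last step'' you worry about is trivial: once $A_M={{A}}(X_2^M)$, the inclusion $X_2^M\subseteq X_2$ gives $B={{A}}(X_2)\subseteq{{A}}(X_2^M)=A_M$, so $A_M$ is a \emph{local} overring of a Pr\"ufer domain with quotient field $F$ and hence a valuation ring --- no uniqueness of a prime $Q_M$ or tree structure of $\Spec(B)$ is needed. This same inclusion gives you the quotient field of $A$ for free, so the ``adjust by $t^ns^m$'' argument is unnecessary; it is also not clearly repairable as stated, since ${{J}}(X_1)$ need not meet $B$, and your $s$ need not lie in ${\ff M}_V$ for $V\in X_1$ when ${\ff M}_V\cap B=0$ (which can happen when $B\not\subseteq V$). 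The paper instead proves the $G$-domain property of $A$ \emph{after} establishing that $A$ is Pr\"ufer, using that $X_1\cup\inv(X_2)$ is then the full set of valuation overrings of $A$, so the intersection of the nonzero primes of $A$ is ${{J}}(X_1)\cap I$ with $I$ the $G$-ideal of ${{A}}(X_2)$; you will need some such argument in place of the sketched one. Your finite-character observation (that $\{V\in X_1: b\notin V\}$ is finite for $b\in B$, via a patch limit point landing in $\lim(X_1)\subseteq X_2$) is correct and pleasant, but it does not substitute for the missing verifications above.
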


 \begin{proof} (1) $\Rightarrow$ (2) Assuming (1), let $X = X_1 \cup X_2$, and let $A = {{A}}(X) = {{A}}(X_1) \cap {{A}}(X_2)$.
%
%(1) $\Rightarrow$ (2)%
%(2) $\Rightarrow$ (1) 
Without loss of generality we may replace $X_2$ with its inverse closure $\inv(X_2) = \gen(\patch(X_2))$. This is because ${{A}}(X_2) = {{A}}(\inv(X_2))$ by Proposition~\ref{union} so that working with $\inv(X_2)$ in the statement of the theorem is the same as working with $X_2$. 
First we prove that $A$ is a Pr\"ufer domain with quotient field $F$.  Let $M$ be a maximal ideal of $A$. We show that $A_M$ is a valuation ring of $F$.  
Since the inverse closed set  $X_2$ is patch closed and $\lim(X_1) \subseteq X_2$, we have  $$\patch(X) = \patch(X_1) \cup X_2 = (X_1 \cup \lim X_1) \cup X_2 = X_1 \cup X_2  =X.$$
Thus $X$ is patch closed, and  
 $$\inv(X) = \gen(\patch(X)) = \gen(X) = \gen(X_1) \cup \gen(X_2) = X_1 \cup X_2 = X,$$ where the second to last equality follows from the fact that $X_1$ consists of rank one valuation rings and $X_2$ is inverse closed. Consequently, $X$ is inverse closed. 
 
 %work with $X_2 \setminus \{F\}$. This is patch closed. 
 
% But is $F \in \lim(X_1)$? If so, Proposition~\ref{union} implies ${{J}}(X_1) = 0$, contradiction.
 
 Since $X$ is inverse closed,   there exists an inverse closed subset $Y$ of $X$   such that $A_M = {{A}}(Y)$ \cite[Corollary 5.7]{OZR}.  Write $Y_1 = Y \cap X_1$ and $Y_2 = (Y \cap X_2) \setminus \{F\}$.  
 %{\bf What if $Y_2 = \emptyset$? We get later that $\lim Y \subseteq Y_2$. Need $F \not \in \lim Y$.}
 Then $Y = Y_1 \cup Y_2 \cup \{F\}$. 
 If $Y$ is finite, then $A_M$ is an intersection of finitely many valuation rings of $F$ and hence is a valuation ring of $F$ \cite[(11.11), p.~38]{N}. Thus if $Y$ is finite the proof that $A_M$ is a valuation ring of $F$ is complete. We assume for the rest of the argument that $Y$ is infinite.

With the aim of applying Lemma~\ref{tech lemma}, we make several observations in order to show that $Y_1$ and $Y_2$ satisfy the hypotheses of the lemma.  Specifically, we show $\lim(Y_1) \subseteq Y_2$ and $Y_2$ is  a nonempty set with ${{J}}(Y_2) \ne 0$.  

First, to see that $\lim(Y_1) \subseteq Y_2$, observe that 
since $X_2$ is patch closed and $\lim(X_1) \subseteq X_2$, we have $\lim(Y) \subseteq \lim(X) = \lim(X_1) \cup X_2 = X_2.$  Thus, since $Y$ is inverse closed and hence patch closed, we have $\lim(Y_1) \subseteq X_2 \cap Y$. If $F \in \lim(Y_1)$, then, since $Y_1 \subseteq X_1$, we have $F \in \lim(X_1)$. In this case, Proposition~\ref{union} implies that ${{J}}(X_1) = 0$,
contrary to assumption. Thus $\lim(Y_1) \subseteq (Y \cap X_2) \setminus \{F\} = Y_2$.

Next we show that $Y_2$ is  a nonempty set with ${{J}}(Y_2) \ne 0$. If $Y_2$ is empty, then the subset $\lim(Y_1)$ of $Y_2$ is empty, and
so $Y_1$ is finite 
 by Proposition~\ref{union}. In this case, $Y = Y_1 \cup Y_2 \cup \{F\}$ is a finite set, contrary to assumption. Therefore, $Y_2$ is nonempty. 
 Furthermore, since $Y_2 \subseteq X_2 \setminus \{F\}$, we have ${{J}}(X_2 \setminus \{F\}) \subseteq {{J}}(Y_2)$. 
 %  ${{A}}(X_2) \subseteq {{A}}(Y_2)$, so that as an overring of the Pr\"ufer domain ${{A}}(X_2)$, the ring ${{A}}(Y_2)$ is also a Pr\"ufer domain with quotient field $F$. 
% We claim ${{J}}(\min Y_2) \ne 0$.  Since $\min(Y_2) \subseteq X_2$ and ${{A}}(X_2)$ is a Pr\"ufer domain with quotient field $F$, it follows that every valuation ring in $\min(Y_2)$ is a localization of ${{A}}(X_2)$ are a prime ideal. Since $\min(Y_2)$ consists of incomparable valuation rings, the prime ideals that are the centers of these valuation rings are incomparable. Therefore, since 
%${{A}}(X_2)$ is a $G^*$-domain,  we have ${{J}}(\min Y_2) \ne 0$.  
  %If ${{J}}(Y_2) = 0$, then ${{J}}(X_2 \setminus \{F\}) = 0$.  However, 
 Since ${{A}}(X_2)$ has quotient field $F$,  each valuation ring in $X_2 \setminus \{F\}$ is centered on a nonmaximal prime ideal of the $G$-domain ${{A}}(X_2)$, so necessarily ${{J}}(X_2 \setminus F) \ne 0$. Thus ${{J}}(Y_2) \ne 0$.   
  
  Having shown that $Y_1$ and $Y_2$ satisfy the hypotheses of Lemma~\ref{tech lemma} (with $Y_1$ and $Y_2$ playing the roles in the lemma of ``$X_1$'' and ``$X_2$,'' respectively), we obtain that 
   $A_M =  {{A}}(Y_2)$. 
 Since $Y_2 \subseteq  X_2$, we have ${{A}}(X_2) \subseteq {{A}}(Y_2) = A_M$.  
 Since ${{A}}(X_2)$ is a Pr\"ufer domain with quotient field $F$, we conclude that as a local overring of a Pr\"ufer domain with quotient field $F$, $A_M$  
  is a valuation ring of $F$. This proves that $A$ is a Pr\"ufer domain with quotient field $F$.  

To see finally that $A$ is a $G$-domain, we first recall that since $A$ is a Pr\"ufer domain and $X=X_1 \cup X_2$ is an inverse closed representation of $A$, then $X$ is the set of valuation rings between $A$ and $F$, each of which is a localization of $A$ at a prime ideal \cite[Proposition 5.6(5)]{OZR}. Similarly, $X_2$ is the set of valuation overrings of ${{A}}(X_2)$, each of which is a localization of ${{A}}(X_2)$ at a prime ideal. Let $I$ be the intersection of the nonzero prime ideals of ${{A}}(X_2)$. Since $X_1$ consists of rank one valuation rings and $X = X_1 \cup X_2$ is the set of localizations of $A$ at prime ideals, it follows that ${{J}}(X_1) \cap I$ is the intersection of nonzero prime ideals of $A$.  If ${{J}}(X_1) \cap I = 0$, then since $A$ has quotient field $F$ we have  ${{J}}(X_1) = 0$ or $I = 0$. By assumption ${{J}}(X_1) \ne 0$ and, since ${{A}}(X_2)$ is a $G$-domain, $I \ne 0$. Thus ${{J}}(X_1) \cap I \ne 0$, and hence $A$ is a $G$-domain.  

(2) $\Rightarrow$ (1)
%Assume (2). 
%It is clear also that since $A$ is a $G^*$-domain, so is ${{A}}(X_2)$.  
 %Since each prime ideal of an overring of the Pr\"ufer domain $A$ is extended from a prime ideal of $A$ {\bf [ref]}, it follows that ${{A}}(X_2)$ is a $G$-domain.  
Since $A={{A}}(X_1) \cap {{A}}(X_2)$ has quotient field $F$,  the valuation rings in $X_1$ are centered on  nonmaximal  prime ideals of $A$. Since $A$ is a $G$-domain, we conclude that ${{J}}(X_1) \ne 0$.  Moreover, since $A$ is a Pr\"ufer domain with quotient field $F$, each prime ideal of ${{A}}(X_2)$ is extended from a prime ideal of $A$ \cite[Theorem 6.10]{LM}. Thus, since $A$ is a $G$-domain, so is ${{A}}(X_2)$.  Finally, every ring between a Pr\"ufer domain and its quotient field is a Pr\"ufer domain \cite[Theorem 6.10]{LM}, so ${{A}}(X_2)$ is a Pr\"ufer domain with quotient field $F$.  
\end{proof}

 \begin{corollary}  \label{limit cor} Let $X$ be a nonempty set of rank one valuation rings in $\Zar(F)$.  Then ${{A}}(X)$ is a Pr\"ufer $G$-domain with quotient field $F$ if and only if 
  ${{J}}(X) \ne 0$  
and ${{A}}(\lim X)$ is a Pr\"ufer $G$-domain with quotient field $F$. \end{corollary}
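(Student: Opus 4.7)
The plan is to obtain Corollary~\ref{limit cor} as the direct specialization of Theorem~\ref{inv thm} in which one takes $X_1 = X$ and $X_2 = \lim(X)$. With these choices, the hypothesis $\lim(X_1) \subseteq X_2$ of the theorem reduces to the tautology $\lim(X) \subseteq \lim(X)$, so the only preparation needed before invoking the theorem is to identify ${{A}}(X_1) \cap {{A}}(X_2)$ with ${{A}}(X)$.

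To establish this identification, I would appeal to Proposition~\ref{union}. When $X$ is infinite, that proposition gives ${{A}}(\lim X) = \bigcup_{Y \in {\cal C}} {{A}}(Y)$, where ${\cal C}$ is the family of cofinite subsets of $X$. Since each ${{A}}(Y)$ contains ${{A}}(X)$ (a smaller set of valuation rings yields a larger intersection ring), we have ${{A}}(X) \subseteq {{A}}(\lim X)$, and hence ${{A}}(X) \cap {{A}}(\lim X) = {{A}}(X)$, as required. A symmetric remark applies to the ideal ${{J}}$, although it is not needed for the identification itself.

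With this identification in hand, the biconditional of Corollary~\ref{limit cor} is precisely the equivalence $(1) \Leftrightarrow (2)$ of Theorem~\ref{inv thm}: condition $(1)$ states that ${{J}}(X_1) \ne 0$ and ${{A}}(X_2)$ is a Pr\"ufer $G$-domain with quotient field $F$, which is the right-hand side of the corollary, while condition $(2)$ states that ${{A}}(X_1) \cap {{A}}(X_2)$ is a Pr\"ufer $G$-domain with quotient field $F$, which is the left-hand side. The boundary case in which $X$ is finite (so that $\lim(X) = \emptyset$ and the theorem does not formally apply) is handled separately by observing that a finite intersection of rank one valuation rings of $F$ is automatically a semilocal one-dimensional Pr\"ufer $G$-domain with nonzero Jacobson radical, so both sides of the biconditional hold under the convention $A(\emptyset) = F$. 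In short, essentially no obstacle remains: Theorem~\ref{inv thm} carries all of the substantive work, and this corollary is simply its diagonal instance $X_2 = \lim(X_1)$.
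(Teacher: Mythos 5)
Your proposal is correct and matches the paper's own proof essentially verbatim: the paper also sets $X_1 = X$, $X_2 = \lim(X)$, invokes Theorem~\ref{inv thm}, and uses Proposition~\ref{union} to get ${{A}}(X) \subseteq {{A}}(\lim X)$ so that ${{A}}(X_1) \cap {{A}}(X_2) = {{A}}(X)$. Your extra remark about the finite case is a reasonable bit of added care but does not change the argument.
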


 \begin{proof} %If ${{A}}(X)$ is a Pr\"ufer $G$-domain with quotient field $F$, then so is every ring between ${{A}}(X)$ and $F$, and hence so is ${{A}}(X_2)$. Conversely, 
 Let $X_2 = \lim(X)$. %Since $\lim(X)$ is patch closed, we have  $X_2 = \gen(\lim X)$.  
%By assumption, ${{A}}(X_2)$ is a Pr\"ufer $G$-domain with quotient field $F$.  Also by assumption, ${{J}}(X) \ne 0$. 
By
  Theorem~\ref{inv thm}, 
     ${{A}}(X) \cap {{A}}(X_2)$ is a Pr\"ufer $G$-domain with quotient field $F$ if and only if  ${{J}}(X) \ne 0$ and 
     ${{A}}(X_2)$ is a Pr\"ufer $G$-domain with quotient field $F$.  
              By Proposition~\ref{union}, ${{A}}(X) \subseteq  {{A}}(\lim X) = {{A}}(X_2)$, so 
             the corollary  follows. 
%              ${{A}}(X)$ is a Pr\"ufer $G$-domain with quotient field $F$.  
 \end{proof}

% \begin{corollary}  \label{limit cor} Let $X$ be a nonempty set of rank one valuation rings in $\Zar(F)$. If ${{J}}(X) \ne 0$ and $\lim(X)$ is finite, then ${{A}}(X)$ is a Pr\"ufer domain with quotient field $F$.
 %\end{corollary}
 
 %\begin{proof} 
 %Since $\lim(X)$ finite, ${{A}}(\lim X)$ is a Pr\"ufer domain with quotient field $F$ \cite[(11.11), p.~38]{N}. 
 %By Corollary~\ref{limit cor}, ${{A}}(X)$ is a Pr\"ufer domain with qu
 
% We claim $B$ is a Pr\"ufer $G$-domain. Since $B$ is a Pr\"ufer domain with quotient field $F$, we have $\{B_P:P \in \Spec(B)\} = \inv(\lim(X)) = \gen(\lim(X))$. If the intersection of nonzero prime ideals of $B$ is $0$, then $0$ is a patch limit point in $\Spec(B)$ \cite{OZR}, and it follows that $F$ is a patch limit point in $\gen(\lim(X))$.   
 % \end{proof}

\begin{corollary}  \label{H cor} Let $V_1,\ldots,V_n$ be rank one valuation rings of $F$. 
If $B$ is a Pr\"ufer $G$-domain with quotient field $F$, then $V_1 \cap \cdots \cap V_n \cap B$ is a Pr\"ufer $G$-domain with quotient field $F$. 
\end{corollary}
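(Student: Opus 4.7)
The plan is to deduce this as a direct application of Theorem~\ref{inv thm} by taking $X_1 = \{V_1,\ldots,V_n\}$ and choosing $X_2 \subseteq \Zar(F)$ to be a set of valuation overrings of $B$ that represents $B$. Since $B$ is a Pr\"ufer domain with quotient field $F$, a natural choice is $X_2 = \{B_P : P \in \Spec(B)\}$, so that every valuation overring of $B$ lies in $X_2$ and ${{A}}(X_2) = B$. By hypothesis, ${{A}}(X_2)=B$ is a Pr\"ufer $G$-domain with quotient field $F$. Also, $X_1$ is finite, so $\lim(X_1) = \emptyset \subseteq X_2$, which takes care of the limit-point hypothesis of Theorem~\ref{inv thm} for free.

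The only nontrivial point in checking condition (1) of Theorem~\ref{inv thm} is to verify that ${{J}}(X_1) = {\ff M}_{V_1} \cap \cdots \cap {\ff M}_{V_n} \ne 0$. After discarding duplicates, we may assume the $V_i$ are distinct. Here I would appeal to the classical fact that a finite intersection $D := V_1 \cap \cdots \cap V_n$ of valuation rings of $F$ is a semilocal Pr\"ufer domain with quotient field $F$, whose finitely many maximal ideals are exactly the contractions $M_i = {\ff M}_{V_j}\cap D$ and whose localizations at these maximal ideals are the $V_j$. Because the maximal ideals of the Pr\"ufer domain $D$ are pairwise comaximal, their intersection equals their product, which is a nonzero ideal (each $M_i \ne 0$ since each $V_i$ has rank one, hence nonzero maximal ideal). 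This nonzero Jacobson radical of $D$ is contained in every ${\ff M}_{V_i}$, showing ${{J}}(X_1) \ne 0$.

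With all hypotheses of Theorem~\ref{inv thm}(1) verified, we conclude via (1)$\Rightarrow$(2) that
\[
{{A}}(X_1) \cap {{A}}(X_2) \;=\; V_1 \cap \cdots \cap V_n \cap B
\]
is a Pr\"ufer $G$-domain with quotient field $F$, which is exactly the desired conclusion. The only genuinely delicate step is the semilocal-Pr\"ufer observation used to establish ${{J}}(X_1)\ne 0$; this could alternatively be obtained from the approximation theorem for independent rank one valuations, since two distinct rank one valuation rings of $F$ are automatically independent and thus admit an element $a \in F^{*}$ with $v_i(a) > 0$ for all $i$.
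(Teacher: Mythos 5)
Your proposal is correct and follows essentially the same route as the paper: the paper also sets $X_1 = \{V_1,\ldots,V_n\}$ and $X_2 = \{B_M : M \in \Max(B)\}$ (your choice of all primes of $B$ is an immaterial variant), notes $\lim(X_1) = \emptyset$, and invokes Theorem~\ref{inv thm}. The only difference is that you supply an explicit justification of ${{J}}(X_1) \ne 0$ (which the paper simply asserts), and your semilocal-Pr\"ufer argument for that point is sound.
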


\begin{proof} Let $X_1 = \{V_1,\ldots,V_n\}$, and let $X_2 = \{B_M:M \in \Max(B) \}$. Since $B$ is a Pr\"ufer domain with quotient field $F$, $X_2$ is a subset of $\Zar(F)$.  
Now $\lim(X_1) = \emptyset \subseteq X_2$, ${{J}}(X_1) \ne 0$ and $B = {{A}}(X_2)$ is a Pr\"ufer $G$-domain with quotient field $F$. By Theorem~\ref{inv thm}, $V_1 \cap \cdots \cap V_2 \cap B = {{A}}(X_1) \cap {{A}}(X_2)$ is a Pr\"ufer $G$-domain with quotient field $F$. 
\end{proof} 

The next example shows the importance of restricting to rank one valuation rings in Corollary~\ref{H cor}. 

\begin{example}
In general, the intersection of a valuation ring of $F$ of  rank more than~$1$ and a  Pr\"ufer $G$-domain need not be a Pr\"ufer domain. For example, let $A$ be as in Example~\ref{vacant example}. With the notation of the example, $B = D[t] + {\ff M}_U$ is a two-dimensional Pr\"ufer domain for which ${\ff M}_U$ is the unique nonzero nonmaximal prime ideal of $B$. Hence $B$ is a $G$-domain. Let $V = D[1/t]_{({\ff m},1/t)}+{\ff M}_U$. Then $V$ is a rank two valuation ring of $F$ contained in $U$ and for which $V \cap B = A$.  Since $A$ is a local domain that is not a valuation domain,  $A$ is not a Pr\"ufer domain. \end{example}

\section{Rank one representations having few limit points}

As another application of the ideas in Section 3 we classify in this section the subrings $A$ of the field $F$ for which there exists a set $X$ of rank one valuation rings such that $A = A(X)$ and $X$ has 
finitely many patch limit points. By Corollary~\ref{FC char}, such rings include the Krull domains, and more generally the {\it finite real character domains}, where an integral domain $A$ with quotient field $F$ has {\it finite real character} if there is a finite character collection $X$ of rank one valuation rings in $\Zar(F)$  such that $A = {{A}}(X)$. If in addition each valuation ring in $X$ is a localization of $A$, then $A$ is a {\it generalized Krull domain}. In order for a finite real character domain to be a generalized Krull domain, it is sufficient that the valuation rings in $X$ have rational rank $1$ (see \cite{Krull} or \cite[Corollary 5.2]{Ohm}).  Generalized Krull domains and finite real character domains have been well studied; see for example \cite{G, Griffin, Griffin2, HOFC, Ohm, Pirtle}. 

Our classification of the case where $X$ has finitely many patch limit points hinges as usual on whether ${{J}}(X) = 0$.  
%Since we are interested in this section in the case in which ${{A}}(X)$ has quotient field $F$, we have  ${{J}}(X) = 0$ if and only if $F \in \patch(X)$ (Corollary~\ref{F cor 2}). 
We show in Theorem~\ref{finite lim case} that if $\lim(X)$ is finite and ${{J}}(X) \ne 0$, then ${{A}}(X)$ is a Pr\"ufer domain.
  The classification in Corollary~\ref{classify}  of the case in which ${{J}}(X)$ is not necessarily the zero ideal  involves Pr\"ufer domains, finite real character domains and an intersection of 
both types of rings.

  Theorem~\ref{cardinal} is our main technical result for dealing with the case in which  $\lim(X)$ is finite. 
It is proved in a stronger form than what is needed in what follows. 
%(In \cite{ODiv} we use the full strength of the lemma.)  
Theorem~\ref{cardinal} requires the following cardinality result involving Pr\"ufer intersections of valuation rings.

%\subsection{Pr\"ufer domains} 

%A {\it B\'ezout domain} is a domain for which every finitely generated ideal is principal. 

\begin{lemma} \label{Nagata} 
Let $X$ be a nonempty subset of $\Zar(F)$. 
If ${{A}}(X)$ contains a local subring  whose residue field has cardinality $\alpha$ and $|X| < \aleph_0 \cdot \alpha$,  then ${{A}}(X) $ is a Pr\"ufer domain with quotient field $F$. 
\end{lemma}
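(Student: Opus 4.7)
My plan is to adapt the classical Nagata--Gilmer trick by which intersections of valuation rings containing a sufficiently large subfield are shown to be Pr\"ufer, with the residue field $k$ of $D$ playing the role of that subfield. The degenerate case is immediate: if $\alpha$ is finite, then $\aleph_0 \cdot \alpha = \aleph_0$, forcing $|X|$ to be finite, and ${{A}}(X)$ is then a finite intersection of valuation rings of $F$, which is classically known to be a B\'ezout (hence Pr\"ufer) domain with quotient field $F$.

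Assume henceforth that $\alpha$ is infinite, so $|X| < \alpha$. I fix a set $\{c_\beta : \beta \in k\} \subseteq D$ of lifts of the residue field $k = D/M_D$, with $0 \in D$ as the lift of $0 \in k$. The crucial observation---and the only place where the local structure of $D$ enters---is that for $\beta \ne \beta'$ the difference $c_\beta - c_{\beta'}$ lies outside $M_D$ and is thus a unit of $D$; since its inverse lies in $D \subseteq V$ for every $V \in X$, it is also a unit of $V$, so the lifts have pairwise distinct images in every residue field $V/{\ff M}_V$. Given $t \in F \setminus \{0\}$, a short case analysis on $v_V(t)$ shows that for each $V \in X$ at most one $c \in \{c_\beta\}$ can satisfy $(t+c)^{-1} \notin V$: when $v_V(t) < 0$ none is bad (since then $v_V(t+c) = v_V(t) < 0$ for every $c$), when $v_V(t) = 0$ only the lift of $-\bar t$ in $V/{\ff M}_V$ is bad (at most one $c$ by the injectivity just noted), and when $v_V(t) > 0$ only $c = 0$ is bad, because every nonzero $c$ is a unit of $V$ and hence $v_V(t+c) = 0$. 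Since the total count of bad pairs $(V,c)$ is bounded by $|X| < \alpha = |\{c_\beta\}|$, some $c$ works for every $V \in X$ simultaneously, yielding $(t+c)^{-1} \in A := {{A}}(X)$.

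Finally I extract that $A$ is B\'ezout (and so in particular Pr\"ufer) with quotient field $F$. For nonzero $a, b \in A$, apply the previous step to $t = a/b$ to obtain $c \in A$ with $u := b/(a+cb) \in A$; from $b = u(a+cb)$ and $a = (a+cb) - cb$ it follows that $(a, b) = (a+cb)A$ is principal. Writing any $t \in F \setminus \{0\}$ as $t = (1-bc)/b$ with $b = 1/(t+c) \in A$ exhibits $F$ as the quotient field of $A$. The only real obstacle in the whole argument is the verification that distinct lifts in $D$ remain distinct modulo ${\ff M}_V$---which hinges on the containment $D^\times \subseteq V^\times$ for every valuation ring $V \supseteq D$---and the careful trichotomy on $v_V(t)$; once those are in place, the counting argument and the extraction of the B\'ezout relation are routine.
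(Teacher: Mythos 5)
Your argument is correct, and it differs from the paper's in an instructive way: the paper handles the finite case via Nagata's theorem exactly as you do, but for the infinite case it simply notes that $|X| < \aleph_0\cdot\alpha$ forces $\alpha$ to be infinite with $|X| < \alpha$, and then cites \cite[Corollary 3.8]{OGeo} (which extends \cite[Theorem 6.6]{OR}). You instead reprove the needed special case of that citation from scratch by the classical unit-translation (Nagata--Gilmer) argument: you lift the residue field of the local subring $D$ to a family of $\alpha$ elements of $D$ that remain pairwise incongruent modulo ${\ff M}_V$ for every $V \in X$ (because their differences are units of the local ring $D$ and hence of every $V \supseteq D$), you run the trichotomy on $v_V(t)$ to see that each $V$ rules out at most one translate, and you beat the count $|X| < \alpha$. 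This buys a self-contained proof and in fact yields the stronger conclusion that ${{A}}(X)$ is a B\'ezout domain, which the lemma does not claim but which is what the cited results actually deliver. One hair worth splitting: when you conclude that some $c$ gives $(t+c)^{-1} \in V$ for all $V \in X$, you should also discard the at most one $c$ with $t+c = 0$, since otherwise the inverse is undefined; as $\alpha$ is infinite this costs nothing. With that noted, the case analysis and the extraction of the B\'ezout relation $(a,b) = (a+cb)A$ are sound, as is the identification of $F$ as the quotient field.
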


\begin{proof} If $X$ is a finite set, then ${{A}}(X)$ is a Pr\"ufer domain with quotient field $F$ by    \cite[(11.11), p.~38]{N}. Otherwise, suppose $X$ is infinite. Since $|X| < \aleph_0 \cdot \alpha$, this forces $\alpha$ to be infinite, and hence $\aleph_0 \cdot \alpha = \alpha$. Then we have $|X| < \alpha$. The fact that ${{A}}(X)$ is a Pr\"ufer domain with quotient field $F$ 
now follows from \cite[Corollary 3.8]{OGeo} (which 
 extends \cite[Theorem 6.6]{OR}). \end{proof}

In the next theorem we use the notation $\min(\lim X)$ for the minimal elements of $\lim(X)$ with respect to set inclusion; see Remark~\ref{min notation}.

\begin{theorem} \label{cardinal}
%\begin{theorem} \label{cardinal} 
Let   $X_1$ be a nonempty set of rank one valuation rings in $\Zar(F)$ such that ${{J}}(X_1) \ne 0$,  let $X_2$ be a patch closed subset of $\Zar(F)$ with $\min(\lim X_1) \subseteq X_2$, and let $ A={{A}}(X_1) \cap {{A}}(X_2)$.
 If  $\min X_2$ is finite or for each maximal ideal $M$ of $A$, $A_M$ contains a local ring whose residue field has cardinality greater than $ |\min X_2|$, then 
 % Then 
 $A$ is a Pr\"ufer domain with  quotient field $F$.   
%\end{theorem} 
\end{theorem}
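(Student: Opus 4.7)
My plan is to show that each localization $A_M$ of $A$ at a maximal ideal is a valuation ring of $F$, which will give that $A$ is a Pr\"ufer domain with quotient field $F$.  Mimicking the localization template from the proof of Theorem~\ref{inv thm}, I first replace $X_2$ by $\inv(X_2)=\gen(X_2)$.  This changes neither $A(X_2)$ nor $\min(X_2)$, and since inverse closed sets are closed under overrings and every element of $\lim(X_1)$ is an overring of an element of $\min(\lim X_1)\subseteq X_2$, the stronger inclusion $\lim(X_1)\subseteq X_2$ now holds.  Setting $X:=X_1\cup X_2$, one shows that $X$ may be taken inverse closed (adjoining or discarding $F$ as needed, without affecting $A$), and for each maximal ideal $M$ of $A$ the result \cite[Corollary 5.7]{OZR} produces an inverse closed subset $Y\subseteq X$ with $A_M=A(Y)$.

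Write $Y_1=Y\cap X_1$ and $Y_2=(Y\cap X_2)\setminus\{F\}$; then $A_M=A(Y_1)\cap A(Y_2)$, and since $Y_2$ is inverse closed in $\Zar(F)\setminus\{F\}$ we also have $A(Y_2)=A(\min Y_2)$.  The key cardinality estimate is $|\min(Y_2)|\le|\min(X_2)|$, proved as follows: the valuation overrings of any fixed $V_0\in\min(X_2)$ form a chain (they correspond to $\Spec(V_0)$), so among those lying in $Y$ there is a unique smallest, call it $V_0^*$, when any exist.  If $V\in\min(Y_2)$ and $V_0\in\min(X_2)$ with $V_0\subseteq V$, then $V_0^*\subseteq V$ and the minimality of $V$ in $Y_2$ forces $V=V_0^*$.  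Thus the assignment $V_0\mapsto V_0^*$ gives a surjection from a subset of $\min(X_2)$ onto $\min(Y_2)$.

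To conclude, I would eliminate $A(Y_1)$ from the representation by showing $A(Y_2)\subseteq A(Y_1)$, so that $A_M=A(\min Y_2)$.  In the spirit of Lemma~\ref{tech lemma}, the tool is Lemma~\ref{cic} applied to the completely integrally closed overring $R=A(Y_1)$ of $A_M$: since $J(X_1)\ne 0$, Corollary~\ref{dominate} together with Corollary~\ref{FC char} (ruling out $F\in\lim X_1$) furnishes $0\ne c\in J(\lim X_1)$, which lies in $J(\lim Y_1)$ because $\lim Y_1\subseteq\lim X_1$.  Combined with $\lim(Y)\subseteq Y_2$ (from $\lim Y_1\subseteq X_2\cap Y=Y_2$ and the patch closedness of $Y_2$) and Lemma~\ref{finally}, this would yield $A(Y_2)\subseteq A(\lim Y)\subseteq A(Y_1)$.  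Once $A_M=A(\min Y_2)$, Case A identifies $A_M$ as a local intersection of finitely many valuation rings of $F$, hence a valuation ring by \cite[(11.11), p.~38]{N}; and in Case B one has $|\min Y_2|\le|\min X_2|<\alpha$, so Lemma~\ref{Nagata} applied to the representation $\min(Y_2)$ of $A_M$, using the hypothesized local subring of $A_M$ of residue field cardinality $\alpha$, gives $A_M$ Pr\"ufer and hence (being local) a valuation ring of $F$.

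The main obstacle is establishing $J(\lim Y)\ne 0$ for the application of Lemma~\ref{cic}: one has $J(\lim Y)=J(\lim Y_1)\cap J(\lim Y_2)$ and control of the first factor through $c$, but unlike the setup of Theorem~\ref{inv thm} there is no $G$-domain hypothesis on $A(X_2)$ supplying a matching nonzero element in $J(\lim Y_2)$.  Overcoming this likely requires a finer analysis of the patch limits of $Y_2$ relative to $\min(X_2)$ (e.g.\ showing each $V\in\lim Y_2$ is itself a patch limit of rank one rings from $X_1$ in which $c$ is a non-unit), or alternatively bypassing Lemma~\ref{cic} by bounding $|Y_1|$ in terms of $\alpha$ (in Case B) and applying Lemma~\ref{Nagata} directly to the joint representation $Y_1\cup\min(Y_2)$ of $A_M$.
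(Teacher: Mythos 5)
Your outline follows the paper's localization template up to a point: reduce to showing each $A_M$ is a valuation ring, split the inverse closed representation $Y$ of $A_M$ into the part $Y_1$ coming from $X_1$ and the part coming from $\gen(X_2)$, and bound the incomparable valuation rings over $\gen(X_2)$ by $|\min X_2|$ (your injection into $\min X_2$ is the same device as the paper's Claim~1, though phrased via a ``smallest overring in $Y$,'' which need not exist in a chain; the correct direction is simply that incomparable rings above the same element of $\min X_2$ would be comparable). The gap you flag at the end is genuine and is exactly where the proof lives: with no $G$-domain hypothesis on ${{A}}(X_2)$ you cannot produce a nonzero element of ${{J}}$ of the second factor, so neither Lemma~\ref{cic} nor Lemma~\ref{tech lemma} can be invoked on the pair $(Y_1,Y_2)$. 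Your fallback of bounding $|Y_1|$ and applying Lemma~\ref{Nagata} to $Y_1\cup\min(Y_2)$ also fails, since $X_1$ (hence $Y_1$) carries no cardinality restriction whatsoever.

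The paper's resolution is to abandon $Y_2$ itself and manufacture a second factor on which ${{J}}$ is nonzero for free. Writing $Y\cap\gen(X_2)=\{U_i:i\in I\}$ with $|I|\le|\min X_2|$, one chooses (via Chevalley's Extension Theorem, as in the proof of Lemma~\ref{finally}) valuation rings $V_i\subseteq U_i$ that \emph{dominate} $A_M$, and sets $Z_1=Y\cap X_1$, $Z_2=\lim(Z_1)\cup\{V_i:i\in I\}$. Then $A_M={{A}}(Z_1)\cap{{A}}(Z_2)$ because $A_M\subseteq V_i\subseteq U_i$, the inclusion $\lim(Z_1)\subseteq Z_2$ holds by construction, and both $|\min(\lim Z_1)|$ and $|I|$ are at most $|\min X_2|$ (the former because $\min(\lim X_1)\subseteq X_2$ pushes $\min(\lim Z_1)$ into $\gen(X_2)$), so Lemma~\ref{Nagata} makes ${{A}}(Z_2)$ a Pr\"ufer domain with quotient field $F$. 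Now ${{J}}(\{V_i\})\supseteq MA_M\ne 0$ precisely because the $V_i$ dominate $A_M$ and $A\ne F$, while ${{J}}(\lim Z_1)\supseteq{{J}}(X_1)\ne 0$; since ${{A}}(Z_2)$ has quotient field $F$, the intersection ${{J}}(Z_2)={{J}}(\lim Z_1)\cap{{J}}(\{V_i\})$ of two nonzero submodules of $F$ is nonzero, and Lemma~\ref{tech lemma} finishes the argument (with a short case analysis when $I=\emptyset$ or $\lim(Z_1)=\emptyset$). The domination step is the idea missing from your proposal; without it, or some substitute for it, the argument does not close.
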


\begin{proof}  %Without loss of generality, we assume that $F \in Z$.  
%We  assume a weaker and more technical  hypothesis than is stated in the lemma. Specifically, rather than assume that $A$ contains a local ring whose residue field has cardinality $\alpha$, we assume instead for the sake of the proof that  
Let $X = X_1 \cup X_2$.   Since $X_2$ is patch closed, we have $\inv(X_2) = \gen(X_2)$. 
%Then  $\patch(X) = X_1 \cup X_2 \cup \lim(X)$.  
Since $\inv(X)  = \gen(\patch(X))$ and $\patch(X_1) = X_1 \cup \lim(X_1)$, we have  
\begin{eqnarray*} \inv(X) 
 & = &  \gen(\patch(X_1)) \cup \inv(X_2) \\
& =  &  \gen(X_1) \cup \gen(\lim X_1) \cup \gen(X_2) \\
 &= & X_1 \cup \gen(\min(\lim X_1))) \cup \gen(X_2) \\
 &= &  X_1 \cup \gen(X_2),
 \end{eqnarray*}
 where the third equality follows from the fact that $X_1$ consists of rank one valuation rings and the fourth from the fact that $\min(\lim X_1) \subseteq X_2.$
 %
 %
%% X_2 \cup \lim(X)) = X _1\cup \gen(X_2 \cup \lim(X)),$$ where the last equality follows from the fact that $X_1$ consists of rank one valuation rings. By assumption, $\min(\lim X_1) \subseteq X_2$, so 
 %\begin{eqnarray*}\gen(X_2 \cup \lim(X)) & = & \gen(X_2 \cup \lim(X_1) \cup \lim(X_2))\\  
% & = &  \gen(X_2 \cup \min(\lim X_1) \cup \lim(X_2)) \\
% &= &  \gen(X_2 \cup \lim(X_2)) \ = \: \gen(\patch(X_2)).
 %\end{eqnarray*}

 To prove that $A$ is a Pr\"ufer domain with quotient field $F$, 
 it suffices to show that $A_M$ is a valuation ring of $F$ for each maximal ideal $M$ of $A$. 
 Let $M$ be a maximal ideal of $A$. 
Then $A_M$ is the intersection of the valuation rings in the patch closed set $Y=\{V \in \inv(X):A_P \subseteq V\}$ \cite[Corollary 5.7]{OZR}. 
%(This also follows from the fact that localization commutes with intersection of valuation rings over a quasicompact set \cite{FS}.)

  Since $Y$ is patch closed, $Y$ contains minimal elements (see Remark~\ref{min notation}) and we can  replace $Y$ with $\min Y$. After such a replacement, we have a subset $Y$ of $\Zar(F)$ with the following properties. 
\begin{itemize}
\item[(i)]  $Y \subseteq \inv(X) = X _1\cup \gen(X_2)$,
\item[(ii)]  $Y$ consists of incomparable valuation rings, and 
\item[(iii)]  $A_M = {{A}}(Y)$.  
\end{itemize}

%  \smallskip
 
%{\textsc{Claim 0.}} If $Y$ is finite or every valuation ring in $Y$ has rank one, then $A_M$ is a valuation ring of $F$.  

%\smallskip

%{\it Proof of Claim 0.}  
 If $Y$ is finite, then $A_M$ is a valuation ring of $F$ \cite[(11.11), p.~38]{N} 
  and the claim is proved. Thus 
  we assume for the rest of the proof that $Y$ is infinite. 
  
%  Next suppose  $Y \subseteq X_1$ (i.e., every valuation ring in $Y$ has rank one).
  
 % {\bf Choose $Y$ perfect since in this case $A_M$ is cic?}

% Then $\gen(Y) = Y \cup \{F\}$ and $\gen(Y)$  is the inverse closure of $Y$.  
%It follows that either $\patch(Y) = Y$ or $F \in \lim(Y)$. In the latter case, $F \in \lim(X_1)$, and hence ${{J}}(\lim X_1) = 0$, contrary to assumption. Thus $\patch(Y) = Y$. Also since ${{J}}(\lim X_1) \ne 0$, we have ${{J}}(Y) \ne 0$. Therefore, $A_M = {{A}}(Y)$ is a valuation ring of $F$ {\bf [ref]}. \qed

  \smallskip
 
{\textsc{Claim 1.}}  Any collection of incomparable valuation rings in $\gen(X_2)$ has cardinality at most $ |\min X_2|$. 
 
 \smallskip
 
 {\it Proof of Claim 1}.   Let $Z = \{W_j:j \in J\}$ be a set of incomparable distinct valuation rings in $\gen(X_2)$. Since $X_2$ is patch closed, $\gen(X_2) = \gen(\min X_2)$.  For each $j \in J$, there is $V_j \in \min X_2$ such that $V_j \subseteq W_j$.  If $j,k \in J$ such that $V_j = V_k$, then $V_j \subseteq W_j \cap W_k$, so that, since $V_j$ is a valuation ring with quotient field $F$, it follows that $W_j \subseteq W_k$ or $W_k \subseteq W_j$.  Since $Z$ consists of incomparable valuation rings, it follows that $W_j = W_k$, and hence $j = k$. Therefore, $|Z|= |J| \leq |\min X_2|$. 
 This proves Claim 1.  
 \qed
 
 \medskip
 
 Since $Y \subseteq X _1\cup \gen(X_2)$, we have
 $Y  = (Y \cap X_1) \cup (Y \cap \gen(X_2)).$  
 Write $$Y \cap \gen(X_2) = \{U_i:i \in I\}.$$ Then $$Y = (Y \cap X_1) \cup \{U_i:i \in I\}.$$

 \smallskip
 
{\textsc{Claim 2.}} 
  $|I| \:  \leq \: |\min X_2|$. 
  
  \smallskip
  
{\it Proof of  Claim 2}.  Since $Y$ consists of incomparable valuation rings and the set $\{U_i:i \in I\} = Y \cap \gen(X_2)$, this follows from Claim 1. 
%
%
%For each $i \in I$ there is a  valuation ring $W_i \in X_2 \cup \min(\lim X)$ such that $W_i \subseteq U_i$.  If $i,j \in I$ and $W_i = W_j$, then $W_i \subseteq U_i \cap U_j$, and, since $W_i$ is a valuation ring, this forces $U_i \subseteq U_j$ or $U_j \subseteq U_i$. Thus, by (ii), $i = j$.  Therefore, by the assumptions in the lemma, we  have
 %  $$|I| \: \leq \: |X_2 \cup \min(\lim X) | 
   %= |Z_2 \cup \lim(Z_1) \cup \lim(Z_2)| =|\lim(Z_1) \cup Z_2|  
 % \:  <  \: \aleph_0 \cdot \alpha.$$ This proves Claim 1.
  \qed
  
 \smallskip
  
  As in the proof of Lemma~\ref{finally},
 for each $i$  there exists  $V_i\in \Zar(F)$  such that $V_i$ dominates $A_M$ and $V_i \subseteq U_i$ .  Since the members of $\{U_i:i \in I\}$  are incomparable, 
we have as in the proof of Claim 1 that $V_i = V_j$ if and only if $i = j$.   
 Thus the members of $\{V_i: i \in I\}$ are incomparable  by Claim~1, and we  have by Claim 2 that
  $$|\{V_i:i \in I\}| = |I| \leq |\min X_2|.$$  Let
 $Z_1 = Y \cap X_1$, $Z_2 = \lim(Z_1) \cup \{V_i:i \in I\}$ and   
   $Z=Z_1 \cup Z_2$.

  \smallskip
  
%{\textsc{Claim 3.}}  There exists a maximal ideal ${\ff m}$ of $D$ such that 
% every valuation ring in $X$ dominates $D_{\ff m}$. 
  
 % \smallskip
  
%  {\it Proof of Claim 3.}  Let $V \in Y \cap Z_1$. Then  $V$ is centered on a maximal ideal ${\ff m}$ of $D$ and $A_M \subseteq V$. For each $i \in I$, $${\ff m} \subseteq  {\ff M}_V \cap A_M \subseteq MA_M \subseteq  {\ff M}_{V_i}.$$  Thus for each $i$, $V_i$ dominates $D_{\ff m}$.   To see now that every valuation ring  in $Y \cap Z_1$ dominates $D_{\ff m}$, let $W \in Y \cap Z_1$. Then $W$ is centered on a maximal ideal ${\ff n}$ of $D$.  Thus ${\ff n} \subseteq {\ff M}_W \cap A_M \subseteq MA_M$.  But also ${\ff m} \subseteq MA_M$, so, since ${\ff m}$ and ${\ff n}$ are maximal ideals of $D$, this forces ${\ff m} = {\ff n}$, which    verifies Claim 3. \qed
    
 %   \smallskip

    {\textsc{Claim 3.}} The ring ${{A}}(Z_2)$ is a Pr\"ufer domain with quotient field $F$.   
  \smallskip
  
 {\it Proof of Claim 3}.    %Since $Z=Z_1 \cup Z_2$, we 
%may apply the fact that $\lim$ commutes with finite unions \cite[Theorem 1.3.4, p.~25]{Eng} to obtain  
  %that $\lim(Z) = \lim(Z_1) \cup \lim(Z_2).$
 % \subseteq \lim(Z_1) \cup \lim(\{V_i:i \in I\}).$$ 
 Since $Z_2 = \lim(Z_1) \cup \{V_i:i \in I\}$, we have  $${{A}}(Z_2) = {{A}}(\lim Z_1) \cap {{A}}(\{V_i\}) = {{A}}(\min(\lim Z_1)) \cap {{A}}(\{V_i\}).$$

 The set $\min(\lim Z_1)$ consists of incomparable valuation rings. Also, since $X_2$ is patch closed, we have $$\lim(Z_1) \subseteq \lim(X) = \lim(X_1) \cup \lim(X_2)  \subseteq \lim(X_1) \cup \gen(X_2).$$ Therefore, since by assumption
$\min(\lim X_1) \subseteq X_2$, we have  
  $$\min(\lim Z_1) \subseteq \gen(\min(\lim X_1)) \cup  \gen(X_2) = \gen(X_2).$$  
Thus, by Claim 1, $|\min(\lim Z_1)|   \leq 
|\min X_2|$. By Claim 2, 
 $|I| \leq |\min X_2|$.  Thus ${{A}}(Z_2)$ can be written as an intersection of $ \leq 2\cdot |\min X_2|$ valuation rings in $\Zar(F)$. Since every valuation ring in $Z$ contains $A_M$ and  $A_M$ contains a local ring whose residue field has cardinality greater than $|\min X_2|$, Lemma~\ref{Nagata} implies that ${{A}}(Z_2)$ is a Pr\"ufer domain with quotient field $F$.  
 %By Proposition~\ref{union}, the ring ${{A}}(\lim Z)$ contains ${{A}}(Z) = {{A}}(Z_1) \cap {{A}}(\lim Z_1) \cap $
 %
 %$B$, so we conclude that ${{A}}(\lim Z)$ is a Pr\"ufer domain with quotient field $F$.  
\qed
 
 \smallskip

  {\textsc{Claim 4.}}  $A_M$ is a valuation ring of $F$.  Hence $A$ is a Pr\"ufer domain with  quotient field $F$.  

  \smallskip
  
   {\it Proof of Claim 4}.  
By the choice of $Y$, we have  $A_M   ={{A}}(Y)$. As noted before Claim 2, we have $Y = Z_1 \cup \{U_i:i \in I\}$, so that $A_M = {{A}}(Z_1) \cap {{A}}(\{U_i\})$. Recall that  $Z_2 = \lim(Z_1) \cup \{V_i:i \in I\}$.
By the choice of the $V_i$, we conclude that $$A_M \subseteq {{A}}(Z_1) \cap {{A}}(Z_2) \subseteq {{A}}(Z_1)  \cap {{A}}(\{V_i\}) \subseteq {{A}}(Z_1) \cap {{A}}(\{U_i\}) = A_M.$$
Thus $A_M = {{A}}(Z_1)  \cap {{A}}(Z_2)$.   
%Proving Claim 4 amounts to showing that ${{A}}(Z_1) \cap {{A}}(Z_2)$ is a valuation ring of $F$. 

   To prove $A_M  = {{A}}(Z_1) \cap {{A}}(Z_2)$ is a valuation ring of $F$,  Lemma~\ref{tech lemma} and Claim 3 imply it suffices to verify that
  $Z_2$ is a nonempty set and  
    ${{J}}(Z_2) \ne 0$. If $Z_2$ is empty, then $\lim(Z_1)$ is empty and $Z_1$ is finite by Proposition~\ref{union}.  
Moreover, in this case $I = \emptyset$, so that $Y = Z_1 \cup \{U_i:i \in I\} = Z_1$ is finite, contrary to the assumption that $Y$ is infinite. Therefore, $Z_2$ is a nonempty set.

 Next we claim that   ${{J}}(Z_2) \ne 0$. Since $Z_2 = \lim(Z_1) \cup \{V_i:i \in I\}$, it suffices to show that ${{J}}(\lim Z_1) \cap {{J}}(\{V_i\}) \ne 0$.  
      
      \smallskip 
      
      {\textsc{Claim}} 4a: 
If $\lim(Z_1) \ne \emptyset$, then ${{J}}(\lim Z_1) \ne 0$.   

\smallskip

{\it Proof of Claim 4a.}
 By assumption, ${{J}}(X_1) \ne 0$. Since $Z_1 \subseteq X_1$, we have by Proposition~\ref{union} that $0 \ne {{J}}(X_1)       
    \subseteq {{J}}(Z_1) \subseteq {{J}}(\lim Z_1)$. \qed
    
    \smallskip
        
    {\textsc{Claim}} 4b:  If $I \ne \emptyset$, then ${{J}}(\{V_i\}) \ne 0$.   
    
    \smallskip
    
    {\it Proof of Claim 4b.}  If  ${{J}}(\{V_i\}) = 0$, then since each $V_i$ dominates $A_M$ we have $M = 0$ and $A = F$. By assumption, $X_1$ is nonempty, so $A \ne F$ since $A$ is contained in a rank one valuation ring of $F$.  This  implies ${{J}}(\{V_i\}) \ne 0$.  \qed

    \smallskip
    
      {\textsc{Claim}} 4c:  If $I = \emptyset$, then ${{J}}(Z_2) \ne 0$. 
     
     \smallskip
     
     {\it Proof of Claim 4c.} If $I = \emptyset$, then
$Z_2 = \lim(Z_1)$.       
      Since $Z_2$ is nonempty, we have then by Claim 4a that ${{J}}(Z_2) \ne 0$. \qed
         
     \smallskip
     
     {\textsc{Claim}} 4d:  If $\lim(Z_1) = \emptyset$, then ${{J}}(Z_2) \ne 0$ . 
     
     \smallskip
     
     {\it Proof of Claim 4d.} If $\lim(Z_1) = \emptyset$, then
$Z_2 = \{V_i:i \in I\}$.       
      Since $Z_2$ is nonempty, we have then that $I \ne \emptyset$ and hence, by Claim 4b, ${{J}}(Z_2) \ne 0$. \qed
     
     \smallskip
     
      {\textsc{Claim}} 4e:  If $I \ne \emptyset$ and $\lim(Z_1) \ne \emptyset$, then ${{J}}(Z_2) \ne 0$.  
      
      \smallskip
      
      {\it Proof of Claim 4e.} In this case, we have by Claims 4a and 4b that ${{J}}(\lim Z_1) \ne 0$ and ${{J}}(\{V_i\}) \ne 0$.  By Claim 3, ${{A}}(Z_2)$ has quotient field $F$. As the intersection of two nonzero ${{A}}(Z_2)$-submodules of $F$, the ideal ${{J}}(Z_2) = {{J}}(\lim Z_1) \cap {{J}}(\{V_i\})$ is nonzero. \qed 
      
      \smallskip
      
      Returning to the proof of Claim 4, we have established in Claims 4c, 4d and 4e that ${{J}}(Z_2) \ne 0$. By Lemma~\ref{tech lemma}, $A_M$ is a valuation ring of $F$.  
     This proves Claim 4 and completes the proof of the theorem.
\end{proof}

In the theorem, $\min(\lim X_1)$ is  itself   an interesting choice for $X_2$. For example, it follows from the theorem that if $F$ contains a field $k$ and $X$ is a set of rank one valuation rings of $F$ containing $k$ with $|\min(\lim X)| < |k|$, then ${{A}}(X)$ is a Pr\"ufer domain with quotient field $F$.  Our focus next is on the case in which $\min(\lim X)$ is finite. In this case there is no need of the presence of a subfield of ${{A}}(X)$ of sufficiently large cardinality.

\begin{theorem} \label{finite lim case}
Let $A$ be a proper subring of $F$.  Then the following are equivalent.
\begin{enumerate}[$(1)$]
\item There is $X \subseteq \Zar(F)$ such that  $A = {{A}}(X)$, ${{J}}(X) \ne 0$,   $\min(\lim X)$ is finite, and 
 all but  finitely many valuation rings in $X$ have rank one. 
 
 \item 
  $A$ is a Pr\"ufer domain with nonzero Jacobson radical and  quotient field $F$ such that  all but finitely many maximal ideals of $A$ have  height one and are the radical of a finitely generated ideal. 
 \end{enumerate}
 
   \end{theorem}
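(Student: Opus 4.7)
The plan is to treat the two implications separately. The forward direction leverages Theorem~\ref{cardinal}, while the reverse uses the canonical representation $\{A_M : M \in \Max(A)\}$.

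For (1) $\Rightarrow$ (2), assuming $X$ is infinite (the finite case being straightforward since $A$ is then a finite intersection of valuation rings), decompose $X = X_1 \cup X_1'$, where $X_1$ is the infinite set of rank one members of $X$ and $X_1'$ is the finite remainder. Since $X_1'$ is finite, $\lim X = \lim X_1$ and $J(X_1) \supseteq J(X) \ne 0$. Setting $X_2 = \min(\lim X_1) \cup X_1'$ (finite, hence patch closed with $\min X_2$ finite), Theorem~\ref{cardinal} yields that $A = A(X_1) \cap A(X_2)$ is a Pr\"ufer domain with quotient field $F$. Because $A$ is Pr\"ufer with quotient field $F$, the inverse closure $\inv(X)$ coincides with $\{A_P : P \in \Spec(A)\}$; hence every localization $A_M$ of rank greater than one (corresponding to a maximal $M$ of height greater than one) lies in $\patch(X) = X \cup \lim X$, and by minimality of $A_M$ among valuation overrings of $A$ it falls into $X_1' \cup \min(\lim X)$. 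This forces only finitely many maximals of height greater than one. For the Jacobson radical to be nonzero: given $0 \ne t \in J(X) \subseteq A$, if $t \notin M$ for some maximal $M$ then $1/t \in A_M$ while $X \subseteq \cal{V}(1/t)$; tracing $A_M \supseteq U$ for some $U \in \patch(X) \subseteq \cal{V}(1/t)$ forces the valuation-theoretic contradiction $t \in {\ff M}_{A_M}$. For a height-one maximal $M$, the same inverse-topology argument gives $A_M \in \patch(X_1)$, and if $A_M \in \lim X_1$ then $A_M \in \min(\lim X)$ by the same minimality, so only finitely many such $M$ arise. The remaining $A_M$ lie in $X_1$ and are patch-isolated by some basic open $\cal{U}(x_1,\ldots,x_n) \cap \cal{V}(y_1,\ldots,y_m)$; translating via $(A : z) = z^{-1} A \cap A$, which is finitely generated in a Pr\"ufer domain as a finite intersection of principal fractional ideals, one extracts a finitely generated ideal $I$ of $A$ with $\sqrt{I} = M$.

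For (2) $\Rightarrow$ (1), take $X = \{A_M : M \in \Max(A)\}$. Then $A(X) = A$, all but finitely many $A_M$ are rank one (from the height-one hypothesis), and $J(X) \supseteq \mathrm{Jac}(A) \ne 0$. If $M$ is height one with $M = \sqrt{(a_1,\ldots,a_n)}$, then $\cal{V}(1/a_1) \cap \cdots \cap \cal{V}(1/a_n) \cap X = \{A_M\}$ (any other maximal $N$ containing every $a_i$ would contain $\sqrt{(a_1,\ldots,a_n)} = M$, contradicting maximality of $N$), so $A_M$ is patch-isolated and $A_M \notin \lim X$. By hypothesis all but finitely many height-one maximals satisfy this, so $\{M \in \Max(A) : A_M \in \lim X\}$ is finite. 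For $V = A_P \in \min(\lim X)$ with $P$ non-maximal, $P \subsetneq M$ for some maximal $M$ necessarily of height greater than one (else $P = 0$ forces $V = F \notin \lim X$ by Corollary~\ref{F cor 2}); since valuation overrings of $A_M$ form a chain and minimal elements of $\lim X$ are pairwise incomparable, at most one such $V$ occurs per $M$, giving finitely many in total.

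The main obstacle will be two technical translations. First, in (1) $\Rightarrow$ (2), showing that patch-isolation of $A_M$ in $X_1$ forces $M$ to be the radical of a finitely generated ideal requires carefully identifying the generators from the isolating basic patch open and exploiting the Pr\"ufer property that $z^{-1} A \cap A$ is finitely generated. Second, deducing $\mathrm{Jac}(A) \ne 0$ from $J(X) \ne 0$ depends on the Pr\"ufer identification $\inv(X) = \{A_P : P \in \Spec(A)\}$ and a careful comparison of $A_M$ with elements of $\patch(X)$ via the inverse topology. The remaining ingredients---including the chain argument bounding $\min(\lim X)$ in (2) $\Rightarrow$ (1)---are relatively routine once these correspondences are in place.
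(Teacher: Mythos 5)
Your overall architecture matches the paper's: the forward direction reduces to Theorem~\ref{cardinal} by splitting $X$ into its rank one part $X_1$ and a finite remainder, and the reverse direction analyzes $X=\{A_M:M\in\Max(A)\}$ directly. The reverse direction is correct and complete --- your hands-on check that ${\cal V}(1/a_1)\cap\cdots\cap{\cal V}(1/a_n)$ isolates $A_M$ is if anything more self-contained than the paper's appeal to its cited results --- and in the forward direction your choice $X_2=\min(\lim X_1)\cup X_1'$ legitimately satisfies the hypotheses of Theorem~\ref{cardinal}, your count of the height $>1$ maximal ideals is the paper's argument, and your Jacobson radical argument is sound.

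The gap is exactly in the step you flag as the main obstacle: extracting a finitely generated ideal $I$ with $\sqrt{I}=M$ from patch-isolation of $A_M$. Suppose $A_M$ is isolated by ${\cal N}={\cal U}(x_1,\ldots,x_n)\cap{\cal V}(y_1)\cap\cdots\cap{\cal V}(y_m)$. The conditions $y_j\notin A_M$ do give finitely generated ideals $(A:_A y_j)\subseteq M$, and these detect any maximal ideal $N$ excluded from ${\cal N}$ because some $y_j\in A_N$. But a maximal ideal $N$ excluded only because some $x_i\notin A_N$ is invisible to $\sum_j(A:_A y_j)$: the relevant ideal there is $(A:_A x_i)$, which satisfies $(A:_A x_i)\not\subseteq M$ and $(A:_A x_i)\subseteq N$ --- the wrong way around, since it cannot be added to an ideal that must remain inside $M$. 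So the extraction ``via $(A:z)=z^{-1}A\cap A$'' does not go through from a patch-basic neighborhood. The paper's route is to note that the rings $A_M$ are the minimal points of the spectral space $\inv(X)$, so the patch and inverse topologies agree on them (Schwartz--Tressl, Corollary 2.6 of [ST]); an inverse-isolating neighborhood of $A_M$ is the complement of a finite union of sets ${\cal U}(t_{k1},\ldots,t_{kn_k})$, each excluding $A_M$ because some $t_{kl_k}\notin A_M$, i.e. $(A:_A t_{kl_k})\subseteq M$, and then $I=\sum_k(A:_A t_{kl_k})$ works (this is Theorem 6.4 of [OGraz]). A secondary unaddressed point: you isolate $A_M$ in $X_1$, but $I$ must avoid every other maximal ideal $N$, including the finitely many with $A_N\in(X\setminus X_1)\cup\lim(X)$; this is repairable by prime avoidance, but it needs to be said.
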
 
 
 \begin{proof} (1) $\Rightarrow$ (2)
 Let $X_1$ be the set of rank one valuation rings in $X$. 
 If $X_1$ is empty, then $X$ is finite, and hence $A$, as an intersection of finitely many valuation rings of $F$, is a Pr\"ufer domain with quotient field $F$ \cite[(11.11), p.~38]{N}.   Suppose $X_1$ is not empty, and 
  let $X_2 = (X \setminus X_1) \cup \lim(X_1)$.  Then ${{J}}(X_1) \ne 0$ since  ${{J}}(X) \ne 0$. Also, 
since $X \setminus X_1$ is finite and $\lim(X_1)$ is patch closed, we have that  
  $X_2$ is a patch closed subspace of $\Zar(F)$ with $\min(\lim X_1) \subseteq X_2$.  Since  $\min(\lim X_1)$ and $X \setminus X_1$ are finite, it follows that $\min X_2 $ is finite. 
 By Theorem~\ref{cardinal},   $A$ is a Pr\"ufer domain with  quotient field $F$.

 Let $ Y = \{A_M:M \in \Max(A)\}$.  
   We claim  that $Y \subseteq \patch(X)$. Since $\inv(X) = \gen(\patch(X))$ and $\inv(X)$ is the set of valuation rings between the Pr\"ufer domain $A$ and its quotient field $F$ \cite[Proposition 5.6(5)]{OZR}, it follows that every minimal valuation overring of $A$ is in $\patch(X)$. Thus, since $Y$ is the set of minimal valuation overrings of the Pr\"ufer domain $A$, we have $Y \subseteq \patch(X)$. 
   
   We now use the fact that $Y \subseteq \patch(X)$ to verify that $A$ has nonzero Jacobson radical. 
   By Proposition~\ref{union}, the fact that $Y \subseteq \patch(X)$ implies  ${{J}}(X) = {{J}}(\patch(X)) \subseteq {{J}}(Y)$, so that ${{J}}(Y) \ne 0$.  By the choice of $Y$,   $J(Y)$ is the Jacobson radical of $A$, so we have proved that $A$ has nonzero Jacobson radical.

It remains to show that  all but finitely many maximal ideals of $A$ have  height one and are the radical of a finitely generated ideal.
Let $Y_1$ be the set of rank one valuation rings in $Y$ that are patch isolated points in $Y$, and let $Y_2$ be the set of valuation rings in $Y$ that have rank $>1$ and are patch isolated in $Y$. Observe that 
 $$Y =  Y_1 \cup Y_2 \cup (Y \cap \lim Y).$$  We claim that $Y_2$ and $Y \cap \lim(Y)$ are finite sets. First, 
 since the valuation rings in $Y$ are minimal over $A$ and $\lim(Y) \subseteq \lim(\patch(X)) = \lim(X)$, we have 
 $$Y \cap \lim(Y) \subseteq Y \cap \lim(X) \subseteq \min(\lim X).$$ Hence $Y \cap \lim(Y)$ is finite since by assumption $\min(\lim X)$ is finite.  
 
 Next, to see that $Y_2$ is finite, observe that
 since $Y_2 \subseteq Y \subseteq \patch(X) = X \cup \lim(X)$, we have  for each $V \in Y_2$ that $V \in X$ or $V \in \lim(X)$. By assumption there are only finitely many valuation rings in $X$ of rank $>1$. On the other hand, if $V \in Y_2 \cap \lim(X)$, then  since $V$ is minimal as a valuation overring of $A$, we have $V \in \min(\lim X)$, a finite set. It follows that $Y_2 $ is a finite set. We have proved  that $Y_2 \cup  (Y \cap \lim Y)$ is a finite set. Therefore, $Y \setminus Y_1$ is a finite set. 
 
Since $Y \setminus Y_1$ is finite, to complete the verification of (2) it suffices to show that each maximal ideal $M$ of $A$ such that $A_M \in Y_1$ has height one and is the radical of a finitely generated ideal of $A$.  It is clear that $M$ has height $1$ since the valuation ring $A_M$, as a member of $Y_1$, has rank one. Also, as a member of $Y_1$, $A_M$ is a patch isolated point in $Y$.  Since the valuation rings in $Y$ are minimal over $A$, the patch and inverse topologies agree on $Y$  \cite[Corollary 2.6]{ST}. (To apply the cited reference we are using here that $\patch(X)$ is a spectral space with respect to the inverse topology \cite[Proposition 8]{Hoc} and that the specialization order in this topology on $\Zar(F)$ is set inclusion.)  Thus $A_M$ is isolated in the inverse topology of $Y$. From this it follows that there is a finitely generated ideal $I$ of $A$ contained in $M$ but in no other maximal ideal of $A$ \cite[Theorem 6.4]{OGraz}. Since $M$ has height one, $M$ is thus the radical of $I$.

(2) $\Rightarrow$ (1) Assuming (2), let $X = \{A_M:M \in \Max(A)\}$. Since $A$ is a Pr\"ufer domain with quotient field $F$, $X \subseteq \Zar(F)$.  By \cite[Theorem 6.4]{OGraz}, the points in $X$ that are isolated in the inverse topology correspond to the maximal ideals $M$ of $A$ such that $M$ contains a finitely generated ideal not contained in any other maximal ideal.  The subspace $\{A_P:P \in \Spec(A)\}$ is a spectral subspace of $\Zar(F)$, so the inverse and patch topologies coincide on its minimal elements with respect to set inclusion \cite[Corollary 2.6]{ST}. Therefore, since all but at most finitely many maximal ideals in $A$ have height one and are the radical of a finitely generated ideal in $A$,  it follows that all but finitely many valuation rings in $X$ are isolated in the patch topology and have rank one. Consequently, $X$ contains at most finitely valuation rings of rank $>1$, and $X$ contains at most finitely many patch limit points. 

We claim that $\min(\lim X)$ is finite. If $V \in X \cap \min(\lim X)$, then there are only finitely many possibilities for $V$ since there are only finitely many patch limits in $X$. Thus $X \cap \min(\lim X)$ is finite. It remains to show that $\min(\lim X) \setminus X$ is finite. 
 Let $V \in \min(\lim X) \setminus X$. Because of how we have defined $X$, $V$ is not a minimal valuation overring of $A$. Hence there is a valuation ring $U \in X$ such that $U \subsetneq V$.  If $V = F$, then $F \in \min(\lim X)$, so that $\lim(X) = \{F\}$.  But then Proposition~\ref{union} implies 
 ${{J}}(X)= 0$. However, by the choice of $X$, ${{J}}(X)$ is the Jacobson radical of $A$,  which we have assumed to be nonzero. Consequently, $V \ne F$, and, since $U \subsetneq V$, $U$ has rank $>1$.  By assumption, there are only finitely many localizations of $A$ at a maximal ideal that have rank $>1$. Thus there are only finitely many possibilities for $U$.  Since each of the valuation rings in the set  $\min(\lim X) \setminus X$ of incomparable valuation rings contains one of these rank $>1$ valuation rings, it follows that $\min(\lim X) \setminus X$ is also finite. Thus $\min(\lim X) = (X \cap \min(\lim X)) \cup (\min(\lim X) \setminus X)$ is finite. 
 \end{proof}

\begin{corollary} \label{classify} Let  $X \subseteq \Zar(F)$ such that ${{A}}(X)$ is a proper subring of $F$ with quotient field $F$.  If $\lim(X)$ is finite and all but finitely many valuation rings in $X$ have rank one, then 
the  ring ${{A}}(X)$ is one of the following types of rings:
%$BC = K$ and $B$ and $C$ satisfy the following requirements. 
\begin{enumerate}[{\em (a)}] 
\item a domain of finite real character, 
\item a  Pr\"ufer domain with nonzero Jacobson radical such that all but finitely many maximal ideals  have height one and are the radical of a finitely generated ideal, or
\item an intersection ${{A}}(X) = B \cap C$, where $B$ is as in (a) and $C$ is as in (b). 

\end{enumerate}

\end{corollary}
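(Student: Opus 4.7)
The plan is to write $A = {{A}}(X)$ as $B \cap C$, where $B$ is either $F$ or a domain of finite real character, and $C$ is either $F$ or a Pr\"ufer domain of the type described in (b); the nondegenerate case yields (c), while the degenerate cases reduce to (a) or (b). The strategy is to use a patch clopen set to separate $F$ from the other limit points of $X$, then apply Theorem~\ref{finite lim case} to one piece and Corollary~\ref{FC char} to the other.

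First I would assume without loss of generality that $F \notin X$, since removing $F$ from $X$ leaves ${{A}}(X)$ unchanged. Since $\lim(X)$ is finite, the set $L = \lim(X) \setminus \{F\}$ is finite; as a finite subset of the Hausdorff patch topology of $\Zar(F)$, $L$ is patch closed. Because the patch topology is zero-dimensional, there is a patch clopen set $U \subseteq \Zar(F)$ with $F \in U$ and $U \cap L = \emptyset$. Setting $X_F = X \cap U$ and $X_L = X \cap U^c$, the fact that both $U$ and $U^c$ are patch closed gives $\lim(X_F) \subseteq \lim(X) \cap U \subseteq \{F\}$ and $\lim(X_L) \subseteq \lim(X) \cap U^c \subseteq L$; in particular, $F \notin X_L \cup \lim(X_L)$.

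Next, let $X_1$ be the set of rank one valuation rings in $X_F$, and let $X_2 = X \setminus X_1 = X_L \cup (X_F \setminus X_1)$. Set $B = {{A}}(X_1)$ and $C = {{A}}(X_2)$. The set $X_1$ consists of rank one valuation rings with $\lim(X_1) \subseteq \{F\}$, so by Corollary~\ref{FC char}, $X_1$ has finite character (vacuously if finite). Thus $B$ is either $F$ (when $X_1 = \emptyset$) or a domain of finite real character. For $X_2$, the hypothesis that all but finitely many elements of $X$ are rank one, combined with the finiteness of $X_F \setminus X_1$, yields that all but finitely many elements of $X_2$ are rank one. Moreover $\lim(X_2) \subseteq L$ is finite, and $F \notin \patch(X_2) = X_2 \cup \lim(X_2)$. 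Since $A \subseteq {{A}}(X_2) \subseteq F$ and $A$ has quotient field $F$, so does ${{A}}(X_2)$, and Corollary~\ref{F cor 2} (applied when $X_2 \ne \emptyset$) gives ${{J}}(X_2) \ne 0$. Applying Theorem~\ref{finite lim case}, (1)$\Rightarrow$(2), to $X_2$ shows that $C$ is a Pr\"ufer domain with nonzero Jacobson radical and with all but finitely many maximal ideals of height one and equal to the radical of a finitely generated ideal, or $C = F$ when $X_2 = \emptyset$.

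Finally, one assembles $A = B \cap C$: if $X_1 = \emptyset$ then $A = C$ is of type (b); if $X_2 = \emptyset$ then $A = B$ is of type (a); otherwise $A = B \cap C$ is of type (c). The main obstacle I anticipate is bookkeeping around the degenerate cases and verifying that the hypotheses of Theorem~\ref{finite lim case} are genuinely met for $X_2$; in particular, one must confirm that $X_2 \ne \emptyset$ implies ${{A}}(X_2)$ is a proper subring of $F$ with quotient field $F$ (both facts following from $A \subseteq {{A}}(X_2) \subseteq V$ for some proper $V \in X_2$ together with $A$ having quotient field $F$), so that the invocation of Theorem~\ref{finite lim case} produces a nontrivial ring of type (b).
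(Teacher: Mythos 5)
Your proposal is correct and follows essentially the same route as the paper: both use the zero-dimensionality of the patch topology to split $X$ into a clopen piece whose only possible limit point is $F$ (handled by Corollary~\ref{FC char}, giving the finite real character factor) and a complementary piece whose patch closure avoids $F$ (handled via Corollary~\ref{F cor 2} and Theorem~\ref{finite lim case}, giving the Pr\"ufer factor of type (b)). The only cosmetic difference is that you sort the finitely many higher-rank valuation rings near $F$ into the second piece explicitly, whereas the paper builds this into the choice of the clopen set $Z$.
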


\begin{proof} 
Suppose first  that $F \not \in \lim(X)$.  Since $A(X)$ has quotient field $F$, Corollary~\ref{F cor 2} implies $J(X) \ne 0$. 
By Theorem~\ref{finite lim case}, ${{A}}(X)$ is a Pr\"ufer domain meeting the requirements in (b).

 Now suppose $F \in \lim(X)$.        
Since $\lim(X)$ is finite and the patch topology on $\Zar(F)$ is Hausdorff with a basis of closed and open sets, there is a patch closed and open subset $Z$ of $X$ such that  $\lim(Z) = \{F\}$ and every valuation ring in $Z \setminus \{F\}$  has rank one.  Let $Y$ be the patch closed subset of $X$ given by $Y = X \setminus Z$.  
 Since $\lim(Z)=\{F\}$ and every valuation ring in $Z \setminus \{F\}$ has rank one, Corollary~\ref{FC char} implies ${{A}}(Z)$ is a domain of finite real character. If $Y$ is empty, then ${{A}}(X) = {{A}}(Z)$ and we are in case (a). 
 
 On the other hand, suppose $Y$ is nonempty. Since $Y$ is  a patch closed set not containing $F$, 
we have 
 $F \not \in \lim(Y)$. By Corollary~\ref{F cor 2}, $J(Y) \ne 0$. Also, all but finitely many valuation rings in $Y$ have rank 1, so  Theorem~\ref{finite lim case} implies that ${{A}}(Y)$ is a Pr\"ufer domain meeting the requirements of (b). Since $X = Y \cup Z$, we have ${{A}}(X)  = {{A}}(Y) \cap {{A}}(Z)$, so ${{A}}(X)$ is described as in (c). \end{proof}

We illustrate Theorem~\ref{finite lim case} with an application to the theory of quadratic transforms of regular local rings.  
 Let $R$ be a regular local ring with maximal ideal  ${\ff m}$.  Let $x \in {\ff m} \setminus {\ff m}^2$, and let $P$ be a prime ideal of the ring $R[{\ff m}x^{-1}]$ that contains ${\ff m}$. Then the ring $R_1 = R[{\ff m}x^{-1}]_P$ is a {\it local quadratic transform of $R$}. If $P$ is chosen to be the prime ideal ${\ff m}R[{\ff m}x^{-1}]$, then the ring $R_1$ is a DVR and is the {\it order valuation ring} of $R$, so named because it is the valuation ring of the valuation $v:F \rightarrow {\mathbb{Z}} \cup \{\infty\}$ that restricts  on $R$ to $$v(r) = \max\{i:r \in {\ff m}^i\} {\mbox{ \rm for all }} r \in R.$$  

If $R$ is a regular local ring of Krull dimension two, then the union of a sequence $\{R_i\}_{i=0}^\infty$ of  distinct iterated local quadratic transforms  is  a valuation ring, and  every valuation overring of $R$ that dominates $R$ arises in this way \cite[Lemma 12]{Ab}. It follows that if $R$ has Krull dimension two, then every valuation overring  of $R$ that dominates $R$ is a patch limit point of the order valuation rings of the $R_i$'s. 

By contrast, in dimension three and higher, a valuation overring of the regular local ring $R$ that dominates $R$ need not be a union of a sequence of iterated local quadratic transforms \cite[Examples 4.7 and~4.17]{Sha}. However, 
 it is proved in \cite[Corollary 5.3]{HLOST} that  if $R$ is a regular local ring and  $\{R_i\}_{i=0}^\infty$ is a sequence of distinct iterated local quadratic transforms of $R$ of Krull dimension at least two, then the set of   order valuation rings of the $R_i$ has a unique patch limit point in the Zariski-Riemann space of the quotient field. This valuation ring is termed the {\it boundary valuation ring}  of the sequence $\{R_i\}$.
 
  It is 
shown in \cite[Theorem 5.4]{HLOST} that the union $S=\bigcup_{i=1}^\infty R_i$ is the intersection $S = V \cap T$ of the boundary valuation ring $V$ of $\{R_i\}$ and the {\it Noetherian hull} $T$ of $S$, the smallest Noetherian overring of $R$ containing $S$.    
Moreover, the boundary valuation ring  $V$ has rank at most two \cite[Theorem 6.4 and Corollary 8.6]{HOT}. 
 In the cited reference the valuation for the boundary valuation ring $V$ is expressed asymptotically using limits of invariants associated to $\{R_i\}$.  
Applying Theorem~\ref{finite lim case}, we can also express $V$ as a localization of the intersection of the order valuation rings of the $R_i$. This is a consequence of the next corollary. 

%\begin{lemma} The subspace $\{V_i:i\geq 0\}$ of $\Zar(F)$  is discrete in the patch topology.
%\end{lemma}

%\begin{proof} 
%Let $S = \bigcup_{i>0}R_i$.  By \cite[Proposition 2.8]{HLOST}, there are at most finitely many $V_i$ that contain $S$. Suppose $i>0$ such that $S \subseteq V_i$.  The set $\{V_j:j \geq 0, S\not \subseteq V_j\}$ is patch open. Thus since the set $\{V_k:k\geq 0\, S \subseteq V_k\}$ is finite and $\Zar(F)$ is  a Hausdorff space there is patch 
%\end{proof}

\begin{corollary} \label{RLR} Let $R$ be a regular local ring of Krull dimension at least two.  Let $\{R_i\}_{i=0}^\infty$ be a sequence of overrings of $R$ such that $R = R_0$ and for each $i\geq 0$, $R_{i+1}$ is a local quadratic transform of $R_i$ of Krull dimension at least two.  For each $i$, let $V_i$ be the order valuation ring of $R_i$.  Then the ring $A = \bigcap_{i=1}^\infty V_i$ is a Pr\"ufer domain and  the boundary valuation ring of $\{R_i\}_{i=0}^\infty$ is  the localization of $A$ at the only maximal ideal of $A$ that is not  finitely generated and of height one.
% whose set of finitely generated maximal ideals is given by  $\{{\ff M}_{V_i} \cap A:i \geq 0\}$.  The only maximal ideal of $A$ that is not finitely generated is ${\ff M}_V \cap A$, where $V$ is the boundary valuation ring of $\{R_i\}_{i=0}^\infty$.  
\end{corollary}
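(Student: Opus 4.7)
The plan is to verify that the set $X := \{V_i : i \geq 1\}$ of order valuation rings satisfies the hypotheses of Theorem~\ref{finite lim case} and then extract the structural conclusions. Each $V_i$ is a DVR, hence of rank one, so all (not merely all but finitely many) valuation rings in $X$ have rank one. Since $V_i$ dominates $R_i$ and $R_i$ dominates $R$, the maximal ideal ${\ff m}$ of $R$ is contained in ${\ff M}_{V_i}$ for every $i$, giving $0 \neq {\ff m} \subseteq {{J}}(X)$. By the cited result \cite[Corollary 5.3]{HLOST}, the set $X$ admits a unique patch limit point in $\Zar(F)$, namely the boundary valuation ring $V$; thus $\lim(X) = \{V\}$ and $\min(\lim X) = \{V\}$ is finite. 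Theorem~\ref{finite lim case} then yields that $A = {{A}}(X) = \bigcap_i V_i$ is a Pr\"ufer domain with quotient field $F$ and nonzero Jacobson radical, and that all but finitely many maximal ideals of $A$ are of height one and are radicals of finitely generated ideals.

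Next, I identify $V$ as a localization of $A$ at a maximal ideal. Since $A$ is Pr\"ufer with quotient field $F$, every valuation overring of $A$ is of the form $A_P$ for a unique $P \in \Spec(A)$, with rank equal to $\height(P)$. Write $V = A_P$ with $P = {\ff M}_V \cap A$. The key claim is that $P$ is maximal. From the proof of Theorem~\ref{finite lim case}, the set $Y := \{A_N : N \in \Max(A)\}$ of minimal valuation overrings of $A$ is contained in $\patch(X) = X \cup \{V\}$. Suppose for contradiction $P$ is not maximal, and choose $N \in \Max(A)$ with $P \subsetneq N$. Then $A_N \subsetneq V = A_P$ and $\rank(A_N) = \height(N) > \height(P) \geq 1$, so $\rank(A_N) \geq 2$. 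But $A_N \in Y$ forces $A_N$ to equal some $V_i$ (impossible, since each $V_i$ has rank one) or $V$ itself (impossible, since $A_N \subsetneq V$). Hence $P$ is a maximal ideal, which I call $M$, and $V = A_M$.

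Finally, $M$ is the unique distinguished maximal ideal. Since $V$ is the only patch limit point of $X$ and $Y \subseteq X \cup \{V\}$, each $A_N \in Y \setminus \{V\}$ is patch isolated in $Y$. By the argument at the end of the proof of Theorem~\ref{finite lim case} (using \cite[Theorem 6.4]{OGraz}), such patch isolated points correspond precisely to maximal ideals of height one containing a finitely generated ideal contained in no other maximal ideal, equivalently maximal ideals of height one that are radicals of finitely generated ideals. Therefore the unique non-isolated element $V = A_M$ of $Y$ corresponds to the unique maximal ideal $M$ failing this property; in particular, $M$ is the only maximal ideal of $A$ of height one that is not finitely generated. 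The main obstacle is the rank argument establishing maximality of $P = {\ff M}_V \cap A$ in the second paragraph; the remainder essentially unpacks the proof of Theorem~\ref{finite lim case} specialized to this setting.
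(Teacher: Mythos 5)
Your proof is correct and follows essentially the same route as the paper: apply Theorem~\ref{finite lim case} to $X=\{V_i\}$ with $\lim(X)=\{V\}$, locate the minimal valuation overrings of $A$ inside $X\cup\{V\}$, prove that ${\ff M}_V\cap A$ is maximal via a rank comparison (the paper phrases this as: a rank one $A_N$ properly inside $V\subsetneq F$ is impossible, which is the same obstruction), and distinguish $M$ from the other maximal ideals by patch isolation. The only quibble is your closing phrase ``the only maximal ideal of $A$ of height one that is not finitely generated'': since $V$ may have rank two, $M$ need not have height one, and the intended conclusion is that $M$ is the unique maximal ideal failing to be both finitely generated and of height one.
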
 

\begin{proof}  Let $X = \{V_i :i\geq 0\}$, and let $V$ be the boundary valuation ring for $\{R_i\}$.   As discussed before the theorem, $\lim X = \{V\}$. By Theorem~\ref{finite lim case}, $A$ is a Pr\"ufer domain since every valuation ring in $X$ has rank one.   Now $\patch(X) = X \cup \lim(X) = X \cup \{V\}$. Since $A$ is a Pr\"ufer domain with quotient field $F$, the set of valuation overrings of $A$ is $\inv(X) = \gen(\patch(X))$ \cite[Proposition 5.6(5)]{OZR}. Thus $\inv(X) = \gen(X) \cup \gen(\{V\}) = X \cup \gen(\{V\})$, where the last equality follows from the fact that $X$ consists of rank one valuation rings. Therefore the valuation rings between $A$ and $F$ that are minimal over $A$ must be among $X \cup \{V\}$.  Let $M$ be a maximal ideal of $A$. Since $A$ is a Pr\"ufer domain, $A_M$ is a valuation domain minimal over $A$, and hence $A_M \in X \cup \{V\}$.  

Suppose first that $A_M \ne V$. Then $A_M$ is a patch isolated point in $X$, which, as in the proof of Theorem~\ref{finite lim case}, implies $M$ is the radical of a finitely generated ideal. Since also $A_M$, as a DVR, has a principal maximal ideal, it follows easily that $M$ is finitely generated. 

On the other hand, suppose $A_M = V$.  If $M$ is the radical of a finitely generated ideal, we have as in the proof of Theorem~\ref{finite lim case} that $A_M$ is isolated in $\patch(X)$ in the patch topology, contrary to the fact that $V \in \lim(X)$. Thus $M$ is not the radical of a finitely generated ideal. 

Finally, to see that $M:= {\ff M}_V \cap A$ is a maximal ideal of $A$, let $N$ be a maximal ideal of $A$ containing $M$.  Suppose  by way of contradiction that $M \subsetneq N$. Then $V \ne A_N$, so that 
since $A_N \in X \cup \{V\}$, we have  $A_N \in X$ and hence $N$ has height one. But
 $A_N \subseteq A_M \subseteq V \subsetneq F$. (Since each $V_i$ dominates $R$ and $V \in \lim(X)$, Proposition~\ref{union} implies $V$ dominates $R$ and hence $V \subsetneq F$.) Since $A_N$ is a rank one valuation ring, this forces $A_N = V$, a contradiction. Therefore, 
$M$ is  a maximal ideal of $A$
%By the above argument, the fact that $A_M = V$ is not among the minimal valuation overrings of $A$ implies every maximal ideal of $A$ is a finitely generated  height one ideal of $A$ whose localization is a DVR. Therefore, $A$ is a Dedekind domain whose Jacobson radical contains the maximal ideal of $R$ since each order valuation ring $V_i$ dominates $R$. Consequently, $A$ is semilocal and the set $X$ must be finite, contrary to the fact that $\lim(X)$ is not empty. 
%
% Then, since the set of minimal valuation overrings of $A$ is contained in $X \cup \{V\}$, we have $U \subseteq V$ for some $U \in X$. Since every valuation ring in $X$ has rank one, this forces $V = U \in X$. In particular, $V$ is a DVR. An iterated sequence of local quadratic transforms of $R$, all of which are dominated by a DVR whose transcendence degree is $\dim(R) -1$, must be finite \cite[Proposition 4]{Abh}. This contradicts our assumption that the sequence $\{R_i\}$ is infinite. 
%We conclude $M$ is a maximal ideal of $A$, 
and the proof is complete. 
\end{proof}

%\begin{corollary} Let $A$ be an integrally closed local domain with maximal ideal $M$ and quotient field $F$. Then $A$ is a valuation domain if and only if $\End(M) = A$ and there exists a dominating representation $Z$ of $A$ in $\Zar(F)$ such that  $\lim(Z)$ has at most countably many elements. 
%\end{corollary}

%\begin{proof} If $A$ is a valuation domain, then it is routine to verify that $\End(M) = A$; cf.~{\bf [ref]}. Also, since $A$ has quotient field $F$, the only dominating representation of $A$ is $Z=\{A\}$, so that $\lim(Z)$ is empty. The converse follows from Corollary~\ref{E(M) 2}.  
%\end{proof}

\end{document}